\tikzset{nodde/.style={circle,draw=blue!50,fill=pink!80,inner sep=4.2pt}}
\tikzset{noddee/.style={circle,draw=black,fill=black,inner sep=2pt}}
\newcommand{\lebn}
\theoremstyle{plain}
\newtheorem{proposition}[equation]{Proposition}
\newtheorem{theorem}[equation]{Theorem}
\newtheorem{corollary}[equation]{Corollary}
\newtheorem{lemma}[equation]{Lemma}
\theoremstyle{definition}
\newtheorem{problem}[equation]{Problem}
\newtheorem{definition}[equation]{Definition}
\newtheorem{remark}[equation]{Remark}
\newtheorem{question}[equation]{Question} 
\numberwithin{equation}{section}
\newcommand{\N}{\mathbb{N}}
\newcommand{\PD}{\mathcal{P}\mathcal{D}}
\newcommand{\ID}{\mathcal{I}\mathcal{D}}
\newcommand{\E}{\mathcal{E}}
\newcommand{\B}{\mathcal{B}}
\newcommand{\CE}{\mathcal{C}}
\newcommand{\FE}{\mathcal{F}}
\newcommand{\RE}{\mathcal{R}}
\newcommand{\GE}{\mathcal{G}}
\newcommand{\LE}{\mathcal{L}}
\newcommand{\IE}{\operatorname{Ind}}
\newcommand{\HE}{\mathcal{H}}
\newcommand{\A}{\mathcal{A}}
\newcommand{\NE}{\mathcal{N}}
\newcommand{\kk}{\Bbbk}
\newcommand{\hf}{\mathfrak{h}}
\newcommand{\Ind}{\operatorname{Ind}}
\newcommand{\lk}{\operatorname{lk}}
\newcommand{\pd}{\operatorname{proj-dim}}
\newcommand{\cd}{\operatorname{cochord}}
\newcommand{\reg}{\operatorname{reg}}
\newcommand{\dis}{\operatorname{dist}}
\newcommand{\gi}{\operatorname{girth}}
\newcommand{\im}{\operatorname{im}}
\newcommand{\ie}{\operatorname{i}}
\newcommand{\prc}{\operatorname{prc}}
\newcommand{\imc}{\operatorname{imc}}
\newcommand{\us}{\operatorname{us}}
\newcommand{\ve}{\operatorname{ve}}
\newcommand{\ev}{\operatorname{ev}}
\newcommand{\ver}{\operatorname{vert}}
\newcommand{\spe}{\operatorname{Sp}}
\newcommand{\dom}{\operatorname{Dom}}
\newcommand{\D}{\Delta}
\begin{document}

\bibliographystyle{plain}

\title[Projective dimension of graphs and the regularity of bipartite graphs]{Projective dimension of (hyper)graphs and the Castelnuovo-Mumford regularity of bipartite graphs}
\author{T\" urker B\i y\i ko\u{g}lu and Yusuf Civan}

\address{}

\address{Department of Mathematics, Suleyman Demirel University,
Isparta, 32260, Turkey.}

\email{tbiyikoglu@gmail.com and yusufcivan@sdu.edu.tr }

\keywords{Edge ring, projective dimension, Castelnuovo-Mumford regularity, dominance complex, induced matching number, Helly number, Levi graph, subdivision graph, vertex-edge and edge-vertex domination numbers.}

\date{\today}

\thanks{Both authors are supported by T\" UB\. ITAK (grant no: 111T704), and the first author is
also supported by ESF EUROCORES T\" UB\. ITAK (grant no: 210T173)}

\subjclass[2010]{13F55, 05E40, 05E45, 05C70, 05C75, 05C76.}

\begin{abstract}
We prove that $\pd(\HE)\leq \reg(\LE(\HE))$ holds for any (hyper)graph $\HE$, where $\LE(\HE)$ denotes the Levi graph (the incidence bipartite graph) of $\HE$. This in particular brings the use of regularity's upper bounds on the calculation of $\pd(\HE)$. When $G$ is just a (simple) graph, we prove that there exists an induced subgraph $H$ of $G$ such that
$\pd(G)=\reg(S(H))$, where $\LE(H)=S(H)$ is the subdivision graph of $H$. Moreover, we show that known upper bounds on $\pd(G)$ involving domination parameters are in fact upper bounds to $\reg(S(G))$. By way of application, we prove that $\pd(\dom(G))=\Gamma(G)$ for any graph $G$, where
$\dom(G)$ is the dominance complex of $G$ and $\Gamma(G)$ is the upper domination number of $G$. As a counterpart of prime graphs introduced for the regularity calculations of graphs, we call a connected graph $G$ as a \emph{projectively prime graph} over a field $\kk$ if $\pd_{\kk}(G-x)<\pd_{\kk}(G)$ holds for any vertex $x\in V(G)$. Such a notion allows us to create examples of graphs showing that most of the known lower and upper bounds on the projective dimension of graphs are far from being tight.
\end{abstract} 

\maketitle

\section{Introduction}\label{sect:intro}
Let $R=\kk[V]$ be a polynomial ring with a finite set $V$ of indeterminates over a field $\kk$. When $I$ is a monomial
ideal in $R$, there are two central invariants associated to $I$, the regularity $\reg(I):=\max\{j-i\colon \beta_{i,j}(I)\neq 0\}$ and the projective dimension $\pd(I):=\max\{i\colon \beta_{i,j}(I)\neq 0\;\text{for some}\;j\}$, that in a sense, they measure the complexity of computing the graded Betti numbers $\beta_{i,j}(I)$ of $I$. In particular, if $I$ is a squarefree monomial ideal, these invariants are related by a well-known duality result of Terai~\cite{NT} stating that $\pd(I)=\reg(R/I^{\vee})$, where $I^{\vee}$ is the Alexander dual of $I$. 

The main purpose of the current work is to explore the nature of the Terai's duality from the combinatorial point of view. Recall that if $I$ is  minimally generated by square-free monomials $m_1,\ldots,m_r$, then the family $\D:=\{K\subseteq V\colon F_i\nsubseteq K\;\text{for any}\;i\in [r]\}$, where $F_i:=\{x_j\colon x_j|m_i\}$ is a simplicial complex on  $V=\{x_1,\ldots, x_n\}$ such that whose minimal non-faces exactly correspond to minimal generators of $I$. Under such an association, the ideal $I$ is said to be the \emph{Stanley-Reisner ideal} of the simplicial complex $\D$, denoted by $I=I_{\D}$, and the simplicial complex $\D_I:=\D$ is called the \emph{Stanley-Reisner complex} of the ideal $I$. Furthermore, the pair $\HE(I):=(V,\{F_1,\ldots,F_r\})$ is a (simple) hypergraph (known also as a \emph{clutter}) on the set $V$, and under such a correspondence, the ideal $I$ is called the \emph{edge ideal} of the hypergraph $\HE(I)$.

These interrelations can be reversible in the following way.
Let $\HE=(V,\E)$ be a hypergraph on $V$. Then the family of subsets of $V$ containing no edges of $\HE$ forms a simplicial complex $\IE(\HE)$ on $V$, the \emph{independence complex} of $\HE$, and the corresponding Stanley-Reisner ideal $I_{\HE}:=I_{\IE(\HE)}$ is exactly the edge ideal of $\HE$. Under such settings, the regularity of a simplicial complex is defined by $\reg(\D):=\reg(R/I_{\D})$ from which we set $\reg(\HE):=\reg(\Ind(\HE))$ for any (hyper)graph $\HE$.

In the topological combinatorial side, the classical combinatorial Alexander duality has already some fruitful consequences. For instance, Csorba~\cite{PC} proved that the independence complex $\Ind(S(G))$ of the subdivision graph $S(G)$ of any graph $G$ is homotopy equivalent to the suspension of the Alexander dual $\Ind(G)^{\vee}$. However, his result becomes a special case when we consider the result of Nagel and Reiner~\cite{NR} on the homotopy type of the suspension of an arbitrary simplicial complex. In detail, they show that if $\D$ is a simplical complex on $V$ with the set of facets $\FE_{\D}$, and $B(\D)$ is the bipartite graph that can be obtained by taking the bipartite complement of the Levi graph of $\D$ (or the incidence graph of the set system $(V,\FE_{\D})$), then the homotopy equivalence $\Ind(B(\D))\simeq \Sigma(\D)$ holds (see Section~\ref{sect:bir} for details). Even if the regularity of a simplicial complex is not a homotopy invariant, we show that it behaves well under such a homotopy equivalence, that is, we prove that
the inequality $\reg(\D)+1\leq \reg(B(\D))$ holds for any simplicial complex $\D$ (Theorem~\ref{cor:reg-bip-simp}).
When we combine such a result with the Terai's duality, we conclude that $\pd(\HE)\leq \reg(\LE(\HE))$ for any hypergraph $\HE$, where $\LE(\HE)$ is the Levi graph of $\HE$. In the case where
$\HE=H$ is just a (simple) graph, such an inequality becomes
$\pd(H)\leq \reg(S(H))$ that corresponds to the Csorba's special case in the algebraic language. In particular, we prove that every graph $H$ has an induced subgraph $H'$ such that $\pd(H)=\reg(S(H'))$. 

Such interrelations naturally bring the question of whether the known lower and upper bounds on the projective dimension of graphs and the regularity of (bipartite) graphs are comparable? Firstly, we show that the induced matching number of the bipartite graph $B(\D)$ is closely related to the Helly number of the simplicial complex $\D$ (Corollary~\ref{cor:im-bdelta}). On the other hand, we verify that upper bounds on $\pd(H)$ involving domination parameters of $H$ invented by Dao and Schweig in a series of papers~\cite{DS1,DS2,DS3} are in fact upper bounds to $\reg(S(H))$. Interestingly, the induced matching and cochordal cover numbers of $S(H)$ can be expressed in terms of the domination parameters of $H$ that may be of independent interest. Indeed, we show that the equalities $\im(S(H))=|H|-\gamma(H)$ and $\cd(S(H))=|H|-\tau(H)$ hold for any graph $H$ without any isolated vertex (Proposition~\ref{prop:imsg} and Corollary~\ref{cor:cochord-tau}), where $\gamma(H)$ is the domination number of $H$, whereas $\tau(H)$ is the independence domination number of $H$. 

As a counterpart of prime graphs introduced for the regularity calculations of graphs~\cite{BC1}, we call a connected graph
$G$ as a \emph{projectively prime graph} over a field $\kk$ if $\pd_{\kk}(G-x)<\pd_{\kk}(G)$ holds for any vertex $x\in V(G)$. Moreover, we say that
$G$ is a \emph{perfect} projectively prime graph if it is projectively prime graph over any field. In particular, we verify that the join of any two graphs is a perfect projectively prime graph. In one hand, such a notion turns the projective dimension calculations of graphs into generalized (weighted) induced matching problems, on the other hand, it allows us to create examples of graphs showing that most of the known lower and upper bounds on the projective dimension is far from being tight.

\section{Preliminaries}\label{sect:prel}

\subsection{(Hyper)graphs}\label{subsect:hyper}
By a (simple) graph $G$, we will mean a finite undirected graph without loops or multiple edges. If $G$ is a graph, $V(G)$ and $E(G)$ (or simply $V$ and $E$) denote its vertex and edge sets. If $U\subset V$, the graph induced on $U$ is written $G[U]$, and in particular, we abbreviate $G[V\backslash U]$ to $G-U$, and write $G-x$ whenever $U=\{x\}$. For a given subset $U\subseteq V$, the (open) neighbourhood of $U$ is
defined by $N_G(U):=\cup_{u\in U}N_G(u)$, where $N_G(u):=\{v\in V\colon uv\in E\}$, and similarly, $N_G[U]:=N_G(U)\cup U$ is the closed neighbourhood of $U$. Furthermore, if $F=\{e_1,\ldots,e_k\}$ is a subset of edges of $G$, we write $N_G[F]$ for the set $N_G[V(F)]$, where $V(F)$ is the set of vertices incident to edges in $F$. 
We write $G^{\circ}$ for the graph obtained from $G$ by removing isolated vertices (if any). 

Throughout $K_n$, $C_n$ and $P_n$ will denote the complete, cycle and path graphs on $n$ vertices respectively. 

We say that $G$ is $H$-free if no induced subgraph of $G$ is isomorphic to $H$. A graph $G$ is called \emph{chordal} if it is $C_r$-free for any $r>3$.  Moreover, a graph is said to be \emph{cochordal} if its complement is a chordal graph.

A \emph{hypergraph} is a (finite) set system $\HE=(V, \E)$, where $V$ is a set, the set of vertices of $\HE$, and $\E\subseteq 2^{V}$ is the set of edges of $\HE$. The hypergraphs that we consider do not have loops, that is, $\E$ can not contain singletons as edges. A hypergraph $\HE=(V, \E)$ is said to be \emph{simple} if $A\nsubseteq B$ and $B\nsubseteq A$ for any two edges $A,B\in \E$. Through our work, we only deal with simple hypergraphs. 

When $S\subseteq V$, the \emph{induced subhypergraph} of $\HE$ by the set $S$ is defined to be the hypergraph
$\HE[S]:=\{A\in \E\colon A\subseteq S\}$. 

Recall that a subset $M\subseteq \E$ is called a {\it matching} of $\HE$ if no two edges in $M$ share a common vertex. Moreover, a matching $M=\{F_1,\ldots,F_k\}$ of $\HE$ is an {\it induced matching} if the edges in $M$ are exactly the edges of the induced subhypergraph of $\HE$ over the vertices contained in $\cup_{i=1}^k F_i$, and the cardinality of a maximum induced matching is called the {\it induced matching number} of $\HE$ and denoted by $\im(\HE)$. 

\begin{definition}
For a given hypergraph $\HE=(V,\E)$, the bipartite graph $\LE(\HE)$ on $V\cup \E$ defined by $(x,F)\in E(\LE(\HE))$ if and only if $x\in F$ is called the \emph{Levi graph} (or the \emph{incidence graph}\footnote{It seems that this terminology is not standard, both stemming from incidence geometry.}) of $\HE$.
\end{definition}

If we drop the condition of being simple for hypergraphs, any bipartite graph appears as the Levi graph of some hypergraph, even if such a representation is not unique. Since we only deal with simple hypergraphs, the resulting bipartite graphs are of special types. 
Let $B$ be a bipartite graph with a bipartition $V(B)=X_0\cup X_1$. Then $B$ is called a \emph{Sperner bipartite graph} (or shortly an $\spe$-bipartite graph) on $X_i$, if $X_i$ contains no pair of vertices $u$ and $v$ such that $N_B(u)\subseteq N_B(v)$. 
Observe that a bipartite graph $B$ satisfies $B\cong \LE(\HE)$ for some (simple) hypergraph $\HE$ if and only if $B$ is a $\spe$-bipartite graph.

We also note that when $\HE=H$ is just a graph, its Levi graph $\LE(H)$ is known as the \emph{subdivision graph} of $H$, denoted by $S(H)$, which can be obtained from $H$ by subdividing each edge of $H$ exactly once. The characterization of subdivision graphs is rather simple, namely that there exists a graph $G$ such that
$B\cong S(G)$ if and only if $B$ is $C_4$-free and $\deg_B(v)=2$ for all $v\in X_i$ for some $i=0,1$. 

\subsection{Domination parameters of graphs}\label{sect:dompar}
We next recall the definitions of some domination parameters of graphs that will be in use throughout the sequel. We note that the relationship between projective dimension of graphs and various domination parameters firstly explored by Dao and Schweig in a series of papers~\cite{DS1,DS2,DS3}. However,  some of their newly defined notions turn out to be already known in the literature~\cite{ABZ,JWP,JL}.

A subset $A\subseteq V$ is called a \emph{dominating set} for $G$ if $N_G[A]=V(G)$, and the minimum size $\gamma(G)$ of a dominating set for $G$ is called the \emph{domination number} of $G$, while the maximum size $\Gamma(G)$ of a minimal dominating set of $G$ is known as the \emph{upper domination number} of $G$.
Moreover, the least cardinality $\ie(G)$ of an independent dominating set for $G$ is called the
\emph{independent domination number} of $G$.

When $S\subseteq V$,  a vertex $x\in V\setminus S$ is said to be a \emph{private neighbour} of a vertex $s\in S$ with respect to $S$ in $G$ if $N_G(x)\cap S=\{s\}$, and the set of private neighbours of $s\in S$ is denoted by $P_G^S(s)$.

A subset $F\subseteq E$ is called an \emph{edge dominating set} of $G$ if each edge of $G$ either belongs to $F$ or is incident to some
edge in $F$, and the \emph{edge domination number} $\gamma'(G)$ of $G$ is the minimum cardinality of an edge dominating set of $G$. We remark that $\gamma'(G)$ equals to the minimum size of a maximal matching for any graph $G$~\cite{YG}. 

For given subsets $X,Y\subseteq V$, we say that $X$ \emph{dominates} $Y$ in $G$ if $Y\subseteq N_G(X)$, and let $\gamma(Y,G)$ denote the least cardinality of a subset $X$ that dominates $Y$ in $G$.
$$\tau(G):=\max \{\gamma(A,G^{\circ})\colon A\subseteq V(G^{\circ})\;\textrm{is\;an\;independent\;set}\}$$
is called the \emph{independence domination number} of $G$.

\begin{remark}\label{rem:dao-sweig}
We note that the independence domination number $\tau(G)$ seems to first appear in the work of Aharoni, Berger and Ziv~\cite{ABZ} (see also~\cite{AS}) in which it is denoted by $\gamma^i(G)$.
\end{remark}
\subsubsection{Vertex-edge and edge-vertex dominations in graphs}
Most of the research on graph parameters involved domination mainly concentrates either sets of vertices dominating all other vertices or sets of edges dominating all other edges in graphs. On the contrary, Peters~\cite{JWP} has introduced vertex-edge and edge-vertex dominations in graphs. Since then there has been little research on such a mixing theory of graph dominations~\cite{BCHH,JL}. However, the recent results of Dao and Schweig~\cite{DS1,DS2,DS3} have validated the usefulness of such notions in the calculation of projective dimension of (hyper)graphs that we describe next.

A vertex $u\in V$ is said to \emph{vertex-wise dominate} an edge $e=xy\in E$, if $u\in N_G[e]$. A subset $S\subseteq V$ is called a \emph{vertex-wise dominating set}, if for any edge $e\in E$, there exists a vertex $s\in S$ that vertex-wise dominates $e$. Furthermore, a vertex-wise dominating set $S$ of $G$ is called \emph{minimal}, if no proper subset of $S$ is vertex-wise dominating for $G$. When $S$ is a minimal vertex-wise dominating set for $G$, every vertex in $S$ has a private neighbour in $E$, that is, $e$ is a \emph{vertex-wise private neighbour} of $s\in S$ if $s$ vertex-wise dominates $e$ while no vertex in $S\setminus \{s\}$ vertex-wise dominates the edge $e$ in $G$.

$(1)\;$ $\Upsilon(G):=\max\{|S|\colon S\;\textrm{is\;a\;minimal\;vertex-wise\;dominating\;set\;of\;}G\}$ is called the \emph{upper} \emph{vertex-wise domination number} of $G$.

$(2)\;$ $\beta(G):=\max\{|S|\colon S\;\textrm{is\;a\;minimal\;independent\;vertex-wise\;dominating\;set\;of\;}G\}$ is called the \emph{upper independent vertex-wise domination number} of $G$.

Observe that the inequality $\beta(G)\leq \Upsilon(G)$ holds for any graph $G$ as a result of the definitions.

An edge $e=xy$ of $G$ is said to \emph{edge-wise dominate} a vertex $u\in V$ if $u\in N_G[e]$. A subset $F\subseteq E$ is called an \emph{edge-wise dominating set}, if for any vertex $u\in V(G^{\circ})$, there exists an edge $f\in F$ that edge-wise dominates $u$. 
Moreover, a vertex $v$ is said to be an \emph{edge-wise private neighbour} of $f\in F$ if $f$ edge-wise dominates $v$ while no edge in $F\setminus \{f\}$ edge-wise dominates $v$.

$(3)$ $\epsilon(G):=\min\{|F|\colon F\subseteq E\;\textrm{is\;an\;edge-wise\;dominating\;set\;of\;}G\}$ is called the \emph{edge-wise domination number} of $G$.

We remark that the parameter $\epsilon(G)$ can also be considered as the minimum size of an edge-wise dominating matching of $G$ (compare to Theorem $73$ in~\cite{JL}).

\begin{remark}\label{rem:peters}
We note that Peters~\cite{JWP} chooses to call vertex-wise and edge-wise dominating sets by $\ve$-dominating and $\ev$-dominating sets respectively, and denotes parameters $\Upsilon(G)$ and $\epsilon(G)$ by $\Gamma_{\ve}(G)$ and $\gamma_{\ev}$ respectively. However, we prefer to follow the most recent notations of Dao and Schweig~\cite{DS1} in order to overcome the difficulty of readability caused by the prefixes $\ve$- and $\ev$-.
\end{remark}
\subsection{Simplicial complexes}\label{subsect:simpl}
An \emph{(abstract) simplicial complex} $\D$ on a finite set $V$ is a family of subsets of $V$ such that $\{v\}\in \D$ for all $v\in V$, and
if $F\in \D$ and $H\subset F$, then $H\in \D$.
The elements of $\D$ are called \emph{faces} of it; the \emph{dimension} of a face $F$ is $\textrm{dim}(F):=|F|-1$, and the \emph{dimension} of $\D$ is defined to be  $\textrm{dim}(\D):=\textrm{max}\{\textrm{dim}(F)\colon F\in \D\}$. The $0$ and $1$-dimensional faces of $\D$ are called \emph{vertices} and \emph{edges} while maximal faces are called \emph{facets}. In particular, we denote by $\FE_{\D}$, the set of facets of $\D$. For a given face $A\in \D$, the subcomplex $\lk_{\D}(A):=\{F\in \D\colon F\cap A=\emptyset\;\textrm{and}\;F\cup A\in \D\}$ is called the \emph{link} of $A$ in $\D$.

When $\D=\Ind(G)$ for some graph $G$, the existence of vertices satisfying some extra properties is useful when dealing with
the homotopy type:
\begin{theorem}\cite{AE, MT}\label{thm:hom-induction}
If $N_G(u)\subseteq N_G(v)$, then there is a homotopy equivalence $\IE(G)\simeq \IE(G-v)$. On the other hand,
if $N_G[u]\subseteq N_G[v]$, then the homotopy equivalence $\IE(G)\simeq \IE(G-v)\vee \Sigma \IE(G-N_G[v])$
holds.
\end{theorem}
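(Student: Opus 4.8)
The plan is to run the standard one-vertex decomposition of an independence complex together with a single cofiber-sequence observation, so that both assertions come out of one computation; this is essentially Engstr\"om's argument, and the first statement is the ``Fold Lemma''. Fix the vertex $v$ occurring in the hypotheses. For \emph{any} graph and any of its vertices one has the splitting $\IE(G)=\IE(G-v)\cup\big(v*\lk_{\IE(G)}(v)\big)$, where $\lk_{\IE(G)}(v)=\IE(G-N_G[v])$ and the two pieces meet exactly in $\IE(G-N_G[v])$. The cone $v*\lk_{\IE(G)}(v)$ is contractible, and the inclusion of the intersection into it is a cofibration (every pair occurring here is a simplicial pair), so collapsing the cone gives a homotopy equivalence
\[
\IE(G)\;\simeq\;\IE(G-v)\big/\IE(G-N_G[v]).
\]
(When $N_G[v]=V$ one reads $\IE(G-N_G[v])$ as the irrelevant complex $\{\emptyset\}$, i.e.\ a $(-1)$-sphere, with the usual convention; I would dispose of this degenerate case separately at the outset.)

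The heart of the matter is then to analyse the inclusion $j\colon\IE(G-N_G[v])\hookrightarrow\IE(G-v)$ under the hypotheses, and the key point is that in \emph{both} cases $N_G(u)\subseteq N_G[v]$: if $N_G[u]\subseteq N_G[v]$ then $N_G(u)\subseteq N_G[u]\subseteq N_G[v]$, while if $N_G(u)\subseteq N_G(v)$ the inclusion is immediate. Consequently, for every face $S$ of $\IE(G-N_G[v])$ — that is, every independent set of $G$ disjoint from $N_G[v]$ — the set $S\cup\{u\}$ is again independent and avoids $v$, hence is a face of $\IE(G-v)$ containing $u$. Therefore $\IE(G-N_G[v])$ is contained in the (contractible) star of $u$ in $\IE(G-v)$, so $j$ is null-homotopic. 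Invoking the standard fact that the cofiber (mapping cone) of a null-homotopic cofibration $C\hookrightarrow A$ is $A\vee\Sigma C$, we conclude
\[
\IE(G)\;\simeq\;\IE(G-v)\big/\IE(G-N_G[v])\;\simeq\;\IE(G-v)\vee\Sigma\,\IE(G-N_G[v]).
\]

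It remains to split into the two cases. If $N_G[u]\subseteq N_G[v]$ this is precisely the second assertion, and nothing further is required. If instead $N_G(u)\subseteq N_G(v)$, then $u\notin N_G(u)$ forces $u\notin N_G(v)$ and $uv\notin E(G)$, so $u$ is a vertex of $G-N_G[v]$; moreover $N_{G-N_G[v]}(u)=N_G(u)\setminus N_G[v]=\emptyset$, so $u$ is isolated in $G-N_G[v]$, whence $\IE(G-N_G[v])$ is itself a cone and in particular contractible. Then $\Sigma\,\IE(G-N_G[v])$ is contractible, the wedge above degenerates, and $\IE(G)\simeq\IE(G-v)$, which is the first assertion.

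I do not anticipate a genuine obstacle here — the statement is folklore and the argument is short. The only points demanding care are the homotopy-theoretic bookkeeping: verifying that each inclusion used is a cofibration so that the strict quotients compute homotopy cofibers, handling basepoints correctly in the identification $A/C\simeq A\vee\Sigma C$, and treating the degenerate case $N_G[v]=V$ (equivalently $\IE(G-N_G[v])=\{\emptyset\}$) with the right suspension convention.
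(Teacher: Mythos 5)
Your argument is correct; note that the paper itself offers no proof of this statement (it is quoted from Engstr\"om \cite{AE} and Marietti--Testa \cite{MT}), so there is no internal proof to compare against. Your route --- the decomposition $\IE(G)=\IE(G-v)\cup\bigl(v*\lk_{\IE(G)}(v)\bigr)$ with intersection $\IE(G-N_G[v])$, collapsing the contractible star to identify $\IE(G)$ with the cofiber of $j\colon \IE(G-N_G[v])\hookrightarrow \IE(G-v)$, and then observing that $j$ factors through the closed star of $u$ in $\IE(G-v)$ (using $N_G(u)\subseteq N_G[v]$ in both cases) so that the cofiber splits as $\IE(G-v)\vee\Sigma\,\IE(G-N_G[v])$ --- is essentially the standard argument behind the cited results, and your unified treatment of both statements is sound. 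Two small bookkeeping points you have essentially already flagged: the statement implicitly assumes $u\neq v$ (your proof needs $u$ to be a vertex of $G-v$, and the claims are false for $u=v$), and the empty/irrelevant complex must be handled with the unreduced-suspension convention $\Sigma\{\emptyset\}\simeq S^0$, e.g.\ already for $G=K_2$. For the first assertion one can also bypass the cofiber machinery entirely: when $N_G(u)\subseteq N_G(v)$ the link $\lk_{\IE(G)}(v)=\IE(G-N_G[v])$ is itself a cone with apex $u$, so the star of $v$ is glued to $\IE(G-v)$ along a contractible subcomplex and the inclusion $\IE(G-v)\hookrightarrow\IE(G)$ is directly a homotopy equivalence; this is the usual ``fold lemma'' proof and is marginally more elementary, but your derivation of it from the wedge splitting is equally valid.
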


In~\cite{BC1} we introduced prime simplicial complexes that constitute building blocks for the regularity calculations. In detail, a simplicial complex $\D$ on $V$ is said to be a \emph{prime simplicial complex} over a field $\kk$ provided that $\reg_{\kk}(\D-x)<\reg_{\kk}(\D)$ for any vertex
$x\in V$. In particular, a graph $G$ is called a \emph{prime graph} (over $\kk$) whenever $\Ind(G)$ is a prime simplicial complex.

The notion of the Alexander dual of a simplicial complex plays a prominent role on the calculation of the projective dimension of $\D$ due to the  Terai's duality. However, for its definition, we need to relax the definition of a simplicial complex first. A \emph{generalized simplicial complex} $\D$ on a vertex set $V$ is simply a family of subsets of $V$, closed under inclusion. In particular, $\ver(\D):=\{x\in V\colon \{x\}\in \D\}$ is called the set of \emph{actual vertices} of $\D$. Observe that when $\D=\Ind(G)$ for some graph $G$, then $v\notin \ver(\Ind(G)^{\vee})$ if and only if $v$ is incident to any edge $e\in E$, i.e., $G\cong K_{1,n}$ for some $n\geq 1$, where $v$ is the vertex with $\deg_G(v)=n$.

\begin{definition}
Let $\D$ be a (generalized) simplicial complex on $V$. Then the \emph{Alexander dual} of $\D$ is defined to the simplicial complex $\D^{\vee}:=\{F\subseteq V\colon V\setminus F\notin \D\}$.
\end{definition} 

Observe that the equality $(\D^{\vee})^{\vee}=\D$ always holds. Furthermore, we recall that when $\D=\Ind(\GE)$ for some (hyper)graph $\GE$, the minimal non-faces of $\Ind(\GE)^{\vee}$ exactly correspond to minimal vertex covers of $\GE$, where a subset $U\subseteq V$ is a \emph{vertex cover} of $\GE$
if $V\setminus U\in \Ind(\GE)$. For this reason, the edge ideal
$I^{\vee}_{\GE}:=I_{\Ind(\GE)^{\vee}}$ of $\GE^{\vee}:=\HE(\Ind(\GE)^{\vee})$ is also known as the \emph{vertex cover ideal} of $\GE$.


\section{Bounding the regularity of simplicial complexes from their Levi graphs}\label{sect:bir}

In this section, we prove that the regularity of any simplicial complex (or any hypergraph) can be bounded from above by the regularity of a bipartite graph. This, in particular, implies together with the Terai's duality that the regularity of the Levi graph $\LE(\HE)$ of any hypergraph $\HE$ provides an upper bound to the projective dimension of $\HE$. For that purpose, we first provide a local version of Nagel and Reiner's result~\cite{NR}. Along the way, we show that the induced matching number of the associated bipartite graph can be derived from the combinatorics of the underlying simplicial complex.

\begin{definition}
Let $\D$ be a simplicial complex on $V$. We define a bipartite graph $B_S(\D)$ for each $S\subseteq V$
by $V(B_S(\D)):=S\cup \FE_{\D}$ and $(s,F)\in E(B_S(\D))$ if and only if $s\notin F$ for any $s\in S$ and $F\in \FE_{\D}$, where $\FE_{\D}$ is the set of facets of $\D$.
In particular, we abbreviate $B_V(\D)$ to $B(\D)$.
\end{definition}

We note that if we consider $\D$ as a hypergraph $\HE(\D)=(V,\FE_{\D})$, the graph $B(\D)$ is just the bipartite complement of the Levi graph of $\HE(\D)$. 

The following theorem of Nagel and Reiner~\cite{NR} (see also Jonsson~\cite{JJ}) is our main motivation for the definition of $B(\D)$:

\begin{theorem}\label{thm:NR}~\cite{NR}
The homotopy equivalence $\Ind(B(\D))\simeq \Sigma(\D)$ holds for any simplicial complex $\D$, where $\Sigma(\D)$ is the (unreduced) suspension $\D$.
\end{theorem}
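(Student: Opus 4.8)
The plan is to display $\Ind(B(\D))$ as a union of contractible subcomplexes and to read off its homotopy type via the nerve lemma. Write $\Ind:=\Ind(B(\D))$. Since $B(\D)$ is bipartite with parts $V$ and $\FE_{\D}$, a subset $A\cup\GE$ with $A\subseteq V$ and $\GE\subseteq\FE_{\D}$ is a face of $\Ind$ precisely when $A\subseteq\bigcap_{F\in\GE}F$ (reading the empty intersection as $V$). In particular $\FE_{\D}$ is itself a face, so the full simplex $2^{\FE_{\D}}$ is a subcomplex of $\Ind$, and for $v\in V$ the closed star $\mathrm{St}(v)$ of $v$ in $\Ind$ equals $\{A\cup\GE\in\Ind\colon v\in F\text{ for every }F\in\GE\}$. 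The first step is the identity $\Ind=2^{\FE_{\D}}\cup\bigcup_{v\in V}\mathrm{St}(v)$: a face with $A=\emptyset$ lies in $2^{\FE_{\D}}$, while a face with $A\neq\emptyset$ lies in $\mathrm{St}(v)$ for any $v\in A$.

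Next I would compute the intersections of the cover $\{2^{\FE_{\D}}\}\cup\{\mathrm{St}(v)\colon v\in V\}$. For $\emptyset\neq S\subseteq V$ one gets $\bigcap_{v\in S}\mathrm{St}(v)=\{A\cup\GE\in\Ind\colon S\subseteq\bigcap_{F\in\GE}F\}$, which contains $\emptyset$ and is a cone with apex any chosen $v_{0}\in S$ (enlarging the $A$-part by $v_{0}$ stays in the set, since $v_{0}\in S\subseteq\bigcap_{F\in\GE}F$), hence contractible; and $2^{\FE_{\D}}\cap\bigcap_{v\in S}\mathrm{St}(v)$ is the full simplex on $\{F\in\FE_{\D}\colon S\subseteq F\}$, which is nonempty — hence contractible — exactly when $S\in\D$, and empty otherwise. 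Likewise $2^{\FE_{\D}}$, each $\mathrm{St}(v)$ and each $2^{\FE_{\D}}\cap\mathrm{St}(v)$ are contractible. So every nonempty intersection in the cover is contractible, and emptiness occurs precisely for those $S\subseteq V$ which are non-faces of $\D$; this is the mechanism by which the topology of $\D$ enters.

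By the nerve lemma for covers of a simplicial complex by subcomplexes with empty-or-contractible intersections, $\Ind\simeq\mathcal{N}$, where the nerve $\mathcal{N}$ has vertex set $\{w_{0}\}\cup V$ (with $w_{0}$ indexing $2^{\FE_{\D}}$ and $v$ indexing $\mathrm{St}(v)$), every subset of $V$ is a face, and $\{w_{0}\}\cup S$ is a face if and only if $S\in\D$. Hence $\mathcal{N}=2^{V}\cup(w_{0}\ast\D)$, where $2^{V}$ is the full simplex on $V$ and $w_{0}\ast\D$ is the cone over $\D$, the two pieces meeting exactly in $\D$. Collapsing the contractible subcomplex $2^{V}$ yields $\mathcal{N}\simeq\mathcal{N}/2^{V}=(w_{0}\ast\D)/\D\cong\Sigma(\D)$, the unreduced suspension; therefore $\Ind(B(\D))\simeq\Sigma(\D)$. (The degenerate case $V=\emptyset$ is excluded.)

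I expect the difficulty to be bookkeeping rather than anything deep. The delicate points are: fixing the incidence convention of $B(\D)$ so the face description of $\Ind$ is correct; verifying that the mixed intersections $2^{\FE_{\D}}\cap\bigcap_{v\in S}\mathrm{St}(v)$ are empty exactly for $S\notin\D$, so that $\mathcal{N}$ is precisely $2^{V}\cup(w_{0}\ast\D)$ and nothing larger; and applying the form of the nerve lemma that allows empty intersections, together with the standard facts that collapsing a contractible subcomplex is a homotopy equivalence and that $(w_{0}\ast\D)/\D\cong\Sigma(\D)$.
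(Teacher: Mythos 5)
Your argument is correct, but note that the paper itself contains no proof of this statement: it is quoted from Nagel--Reiner \cite{NR} (with a pointer to Jonsson \cite{JJ}), so there is no internal proof to compare against. Your nerve-lemma route is a clean, self-contained verification. You use the right (complement) convention for $B(\D)$, so a face of $\Ind(B(\D))$ is $A\cup\GE$ with $A\subseteq\bigcap_{F\in\GE}F$; the cover by the simplex $2^{\FE_{\D}}$ and the closed stars $\mathrm{St}(v)$, $v\in V$, does exhaust $\Ind(B(\D))$; the intersections are as you compute them (cones with apex some $v_0\in S$, respectively the full simplex on $\{F\in\FE_{\D}\colon S\subseteq F\}$, which is nonempty exactly when $S\in\D$); and the version of the nerve theorem allowing empty intersections (Bj\"orner's nerve lemma for covers by subcomplexes) applies verbatim, giving $\Ind(B(\D))\simeq 2^{V}\cup(w_0\ast\D)$ glued along $\D$. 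The final identification with $\Sigma(\D)$, either by collapsing the contractible subcomplex $2^{V}$ (legitimate, since simplicial pairs are cofibrations) or by the gluing lemma applied to the two contractible pieces meeting in $\D$, is standard. The only degenerate situations are $V=\emptyset$, which you exclude, and $\D$ a simplex, where both sides are contractible and your argument still goes through; with the paper's convention that every element of $V$ is a vertex of $\D$, all the nonemptiness assertions you use (e.g.\ each $v$ lies in some facet) hold. So your proposal supplies a proof of the quoted theorem that is independent of, and arguably more elementary than, the sources the paper cites.
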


We first provide a local version of Theorem~\ref{thm:NR}.
\begin{lemma}\label{lem:max-local}
Let $\D$ be a simplicial complex. Then $\IE(B_S(\D))\simeq \IE(B(\D[S]))$ for any $S\subseteq V$.
\end{lemma}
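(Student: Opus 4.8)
The plan is to compare the two graphs $B_S(\D)$ and $B(\D[S])$ directly and to repeatedly apply the fold lemma (Theorem~\ref{thm:hom-induction}) to kill the ``extra'' vertices in $B_S(\D)$ that do not survive to $B(\D[S])$. First I would unwind the definitions. The graph $B_S(\D)$ has vertex set $S\cup \FE_{\D}$, with $s\sim F$ iff $s\notin F$. On the other hand $B(\D[S])$ has vertex set $S\cup \FE_{\D[S]}$, where the facets of $\D[S]$ are exactly the maximal sets among $\{F\cap S\colon F\in\FE_{\D}\}$; and again $s\sim G$ iff $s\notin G$ for $s\in S$, $G\in\FE_{\D[S]}$. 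So on the $S$-side the two graphs have the same vertices, and the task is to show that shrinking each facet $F$ of $\D$ to $F\cap S$, and then discarding the non-maximal ones, does not change the homotopy type of the independence complex.

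The key observation is the following. Fix $F\in\FE_{\D}$ and consider its ``restricted shadow'' $F\cap S$. In $B_S(\D)$ the neighbourhood of the vertex $F$ (sitting on the facet side) is exactly $S\setminus F = S\setminus(F\cap S)$, which depends on $F$ only through $F\cap S$. Consequently, if $F, F'\in\FE_{\D}$ satisfy $F\cap S\subseteq F'\cap S$, then $N_{B_S(\D)}(F')\subseteq N_{B_S(\D)}(F)$, so by the first part of Theorem~\ref{thm:hom-induction} we may delete $F$ without changing the homotopy type of $\IE(B_S(\D))$; here I use that two facet-side vertices are non-adjacent in a bipartite graph, so deleting one does not affect the others' relevant neighbourhoods. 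Iterating, I can delete from the facet side every $F$ whose restricted shadow $F\cap S$ is not maximal among $\{F'\cap S\}$, and I can also collapse duplicates (if $F\cap S = F'\cap S$ then again one dominates the other, so one of them goes). After all these folds, the surviving facet-side vertices are in bijection with the distinct maximal members of $\{F\cap S\colon F\in\FE_{\D}\}$, that is, with $\FE_{\D[S]}$, and each surviving vertex $F$ has neighbourhood $S\setminus(F\cap S)$ — precisely the neighbourhood that the corresponding facet of $\D[S]$ has in $B(\D[S])$. Hence the folded graph is isomorphic to $B(\D[S])$, giving $\IE(B_S(\D))\simeq\IE(B(\D[S]))$.

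The main point requiring care — and the step I expect to be the real obstacle — is the bookkeeping of the folding order: one must check that the domination relation $N(F')\subseteq N(F)$ used to justify each deletion persists as earlier vertices are removed. Since all facet-side vertices form an independent set in the bipartite graph $B_S(\D)$, deleting one of them changes neither the edge set among the remaining facet-side vertices (there are none) nor the neighbourhoods of the $S$-side vertices restricted to the surviving facets in the way that matters; so the inclusions $N(F')\subseteq N(F)$ among surviving facets are unaffected, and the induction goes through cleanly. (One should also note the degenerate cases: if some $F\in\FE_{\D}$ satisfies $F\cap S = S$ — i.e.\ $S\in\D$ — then $F$ becomes an isolated vertex on the facet side, $\D[S]$ is a simplex, and both sides are contractible, which is consistent.) A brief remark that the resulting equivalence specialises, when $S=V$, to the identity $B_V(\D)=B(\D)$ and hence is compatible with Theorem~\ref{thm:NR} would round off the argument.
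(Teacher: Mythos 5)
Your proposal is correct and takes essentially the same route as the paper: both arguments fold away facet-side vertices of $B_S(\D)$ whose neighbourhoods $S\setminus(F\cap S)$ contain that of a retained representative (via Theorem~\ref{thm:hom-induction}) and then identify the surviving induced subgraph with $B(\D[S])$. Your organization by maximal shadows $F\cap S$ is merely a repackaging of the paper's classification into the families $\FE_i$, $\A_S$ and $\B_S$.
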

\begin{proof}
Assume that $\FE_{\D[S]}=\{H_1,\ldots,H_k\}$, and define $\FE_i:=\{F\in \FE_{\D}\colon H_i\subseteq F\}$ for each $i\in [k]$,
$\A_S:=\{C\in \FE_{\D}\colon \emptyset\neq C\cap S\notin \FE_{\D[S]}\}$ and $\B_S:=\{D\in \FE_{\D}\colon D\cap S=\emptyset\}$.
Note that if $(s,H_i)\in E(B(\D[S]))$ for some $s\in S$ and $i\in [k]$, then $(s,F)\in E(B_S(\D))$ for any $F\in \FE_i$. 
Suppose otherwise $H_i\cup \{s\}\in \D$ that in turn implies that
$H_i\cup \{s\}\in \D[S]$. However, the latter is impossible due to the fact that $H_i$ is a facet of $\D[S]$. It then follows that 
\begin{equation*}
N_{B_S(\D)}(F)=N_{B_S(\D)}(F')=N_{B_S(\D[S])}(H_i)
\end{equation*}
for any $F,F'\in \FE_i$ and $i\in [k]$. If we fix a facet, say $F_i\in \FE_i$ for each $i\in [k]$,
the homotopy equivalence $\IE(B_S(\D))\simeq \IE(B_S(\D))-(\FE_i\setminus F_i))$ holds by Theorem~\ref{thm:hom-induction}.
In other words, we can remove the facets in $\FE_i$ one by one until there remains only a single facet without altering the homotopy type.

On the other hand, if $C\in \A_S$, then $C\cap S$ is a non empty face of $\D[S]$ which is not maximal. So, it is contained by a facet, say $H_i$. However, the containment $C\cap S\subseteq H_i$ forces that $N_{B_S(\D)}(F_i)\subseteq N_{B_S(\D)}(C)$, where $F_i\in \FE_i$ is the facet that we fixed earlier. Therefore, the removal of any such vertex from $\IE(B_S(\D))$ does not alter the homotopy type. Similarly, if $D\in \B_S$, then we have $N_{B_S(\D)}(F_i)\subseteq N_{B_S(\D)}(D)$ for any $i\in [k]$ so that $\IE(B_S(\D))\simeq \IE(B_S(\D)-\B_S)$.

Finally, if we define $\CE_S:=\cup_{i=1}^k (\FE_i\setminus F_i)\cup \A_S\cup \B_S$, we conclude that
\begin{equation*}
\IE(B_S(\D))\simeq \IE(B_S(\D)-\CE_S)\cong \IE(B_S(\D)[S\cup\{F_1,\ldots,F_k\}]).
\end{equation*}
However, the graph $B_S(\D)[S\cup\{F_1,\ldots,F_k\}]$ is isomorphic to $B_S(\D[S])$ that proves the claim.
\end{proof}

\begin{corollary}\label{cor:max-local}
$\IE(B_S(\D))\simeq \Sigma(\D[S])$ for any $S\subseteq V$.
\end{corollary}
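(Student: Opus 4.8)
The plan is to deduce Corollary~\ref{cor:max-local} by combining Lemma~\ref{lem:max-local} with the global statement of Theorem~\ref{thm:NR} applied to the induced subcomplex $\D[S]$ rather than to $\D$ itself. Indeed, Lemma~\ref{lem:max-local} already gives the homotopy equivalence $\IE(B_S(\D))\simeq \IE(B(\D[S]))$, so it remains only to identify $\IE(B(\D[S]))$ with $\Sigma(\D[S])$. But $\D[S]$ is itself a simplicial complex (on the vertex set $S$, or on $V$ with the vertices outside $S$ being non-faces — either way, a bona fide simplicial complex in the sense needed for Theorem~\ref{thm:NR}), and $B(\D[S])$ is by definition the bipartite graph $B_S(\D[S])$ with $S$ playing the role of the full vertex set. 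Hence Theorem~\ref{thm:NR}, applied verbatim with $\D$ replaced by $\D[S]$, yields $\IE(B(\D[S]))\simeq \Sigma(\D[S])$.

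Concretely, I would write: fix $S\subseteq V$. By Lemma~\ref{lem:max-local} we have $\IE(B_S(\D))\simeq \IE(B(\D[S]))$. Since $\D[S]$ is a simplicial complex (with facet set $\FE_{\D[S]}$), Theorem~\ref{thm:NR} applied to $\D[S]$ gives $\IE(B(\D[S]))\simeq \Sigma(\D[S])$. Chaining the two equivalences finishes the proof. This is essentially a one-line argument; the substantive content was already carried by Lemma~\ref{lem:max-local}.

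The only point that requires a moment of care — and the place I would expect a referee to want a sentence of clarification — is the bookkeeping about vertex sets: $B_S(\D)$ is defined on $S\cup\FE_{\D}$ using the facets of the \emph{ambient} complex $\D$, whereas $B(\D[S])=B_S(\D[S])$ is defined on $S\cup\FE_{\D[S]}$ using the facets of the \emph{restricted} complex. One must make sure that the "$B(-)$" in the conclusion of Lemma~\ref{lem:max-local} is literally the graph to which Theorem~\ref{thm:NR} applies, i.e. that $B(\D[S])$ is the bipartite complement of the Levi graph of the hypergraph $(S,\FE_{\D[S]})$; this is immediate from the definitions but worth stating so the reader does not conflate facets of $\D$ with facets of $\D[S]$. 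Apart from this, there is no real obstacle: the corollary is a direct formal consequence of the lemma and the Nagel--Reiner theorem, and no new combinatorial or topological input is needed. If one prefers a self-contained proof not invoking Theorem~\ref{thm:NR} as a black box, one could instead re-run the argument of Lemma~\ref{lem:max-local} once more with $S = V(\D[S])$ to collapse $B(\D[S])$ to a cone-and-suspension model of $\D[S]$, but invoking Theorem~\ref{thm:NR} is cleaner and is exactly the intended route.
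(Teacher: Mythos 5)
Your argument is exactly the paper's proof: apply Theorem~\ref{thm:NR} to the complex $\D[S]$ to obtain $\IE(B(\D[S]))\simeq \Sigma(\D[S])$, then chain this with the equivalence $\IE(B_S(\D))\simeq \IE(B(\D[S]))$ from Lemma~\ref{lem:max-local}. Your extra remark on distinguishing the facets of $\D$ from those of $\D[S]$ is a reasonable clarification but does not change the substance.
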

\begin{proof}
Since $\Sigma(\D[S])\simeq \IE(B_S(\D[S]))$ by Theorem~\ref{thm:NR}. Thus, the result follows from Lemma~\ref{lem:max-local}.
\end{proof}

\begin{theorem}\label{cor:reg-bip-simp}
The inequality $\reg(B(\D))\geq \reg(\D)+1$ holds for any simplicial complex $\D$.
\end{theorem}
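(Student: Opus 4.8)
The plan is to reduce the global statement to the local computation in Corollary~\ref{cor:max-local} via the standard description of regularity of a simplicial complex in terms of reduced homology of induced subcomplexes. Recall Hochster-type formula / the combinatorial description of regularity: for a simplicial complex $\Gamma$ on a vertex set $W$, one has
\begin{equation*}
\reg(\Gamma)=\max\{\, j \;:\; \widetilde{H}_{j-1}(\Gamma[W'];\kk)\neq 0 \ \text{for some}\ W'\subseteq W\,\},
\end{equation*}
where we use $\reg(\Gamma)=\reg(\kk[W]/I_{\Gamma})$ and the fact that $\widetilde{H}_{i-1}(\Gamma[W'])$ computes (a summand of) $\beta_{|W'|-i,\,|W'|}$ of the Stanley-Reisner ring. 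So I would first fix an induced subcomplex witnessing $\reg(\D)$: choose $S\subseteq V$ and an index $j$ with $\widetilde{H}_{j-1}(\D[S];\kk)\neq 0$ and $j=\reg(\D)$.

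Next I would produce from this an induced subgraph of $B(\D)$ whose independence complex has nonvanishing homology one degree higher. The natural candidate is the induced subgraph of $B(\D)$ on $S\cup\FE_{\D}$, which is exactly $B_S(\D)$ by the definition of $B_S$ (edges $(s,F)$ with $s\notin F$, $s\in S$, $F\in\FE_{\D}$). By Corollary~\ref{cor:max-local} we have $\IE(B_S(\D))\simeq \Sigma(\D[S])$, and since the (unreduced) suspension shifts reduced homology by one, $\widetilde{H}_{j}(\IE(B_S(\D));\kk)\cong \widetilde{H}_{j-1}(\D[S];\kk)\neq 0$. Because $B_S(\D)=B(\D)[S\cup\FE_{\D}]$ is an induced subgraph of $B(\D)$ on $|S|+|\FE_{\D}|$ vertices, the regularity formula above applied to $\Gamma=\IE(B(\D))$ gives $\reg(B(\D))\geq j+1=\reg(\D)+1$, which is the claim.

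The main point to get right is the degree bookkeeping in the suspension step and the precise form of the homological characterization of regularity, including that it is witnessed by induced subcomplexes (so that passing from $B(\D)$ to the induced subgraph $B_S(\D)$ is legitimate) and that only reduced homology in the relevant degree matters; over an arbitrary field $\kk$ this is fine since homotopy equivalence induces isomorphisms on $\widetilde{H}_\ast(-;\kk)$. A secondary subtlety is making sure that the vertex set of $B(\D)$ is genuinely $V\cup\FE_{\D}$ and that restricting to $S\cup\FE_{\D}$ really yields $B_S(\D)$ (no spurious edges among facet-vertices appear, since $B(\D)$ is bipartite). I do not expect either point to be a real obstacle, but the homological regularity formula is where a careless statement could break the argument, so I would state it carefully (citing Terai or the Hochster-type formula) before invoking it. Everything else — the suspension homology shift and the identification of the induced subgraph — is routine given Corollary~\ref{cor:max-local}.
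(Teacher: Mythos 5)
Your proposal is correct and follows essentially the same route as the paper's own proof: fix $S\subseteq V$ witnessing $\reg(\D)=n$ via the Hochster-type description, apply Corollary~\ref{cor:max-local} to get $\widetilde{H}_n(\IE(B_S(\D)))\neq 0$, and observe that $B_S(\D)$ is the induced subgraph of $B(\D)$ on $S\cup\FE_{\D}$. The paper states this more tersely, but the substance is identical.
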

\begin{proof}
Suppose that $\reg(\D)=n$. If $S\subseteq V$ is a subset such that $\widetilde{H}_{n-1}(\D[S])\neq 0$, we then have $\widetilde{H}_n(\IE(B_S(\D)))\neq 0$ by Corollary~\ref{cor:max-local} so that $\reg(B(\D))\geq n+1$.
\end{proof}

We remark that the gap between $\reg(\D)$ and $\reg(B(\D))$ could be arbitrarily large. For instance, if we define $F_n:=W(K_n)$ for some $n\geq 3$, where $W(K_n)$ is the graph obtained by attaching a pendant vertex to each vertex of $K_n$ (the \emph{whisker} of the complete graph), then $\reg(B(\Ind(F_n)))\geq n$, while $\reg(\Ind(F_n))=1$ since $F_n$ is cochordal.

The relation between a simplicial complex $\D$ and its bipartite graph $B(\D)$ can be reversible over the class of Sperner bipartite graphs.

\begin{proposition}\label{prop:bip-simp}
If $B$ is an $\spe$-bipartite graph on $X_i$, then the family
$$\D_i(B):=\{F\subseteq X_i\colon F\subseteq X_i\setminus N_{B}(v)\;\textrm{for\;some\;}v\in X_{1-i}\}$$
is a simplicial complex on $X_i$ for $i=0,1$. In particular,
we have $\reg(\D_i(B))+1\leq \reg(B)$.
\end{proposition}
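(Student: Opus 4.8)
The plan is to realize $\D_i(B)$ as a simplicial complex whose associated bipartite graph $B(\D_i(B))$ is an \emph{induced} subgraph of $B$, and then to combine Theorem~\ref{cor:reg-bip-simp} with the monotonicity of regularity under restriction. First I would check that $\D_i(B)$ is a simplicial complex: closure under inclusion is immediate, since if $F\subseteq X_i\setminus N_B(v)$ and $H\subseteq F$, then also $H\subseteq X_i\setminus N_B(v)$. For a given $x\in X_i$, the singleton $\{x\}$ lies in $\D_i(B)$ as soon as there is a $v\in X_{1-i}$ with $x\notin N_B(v)$; if no such $v$ existed, then $N_B(x)=X_{1-i}$, which forces $N_B(y)\subseteq N_B(x)$ for every $y\in X_i$, contradicting the $\spe$-condition on $X_i$ unless $X_i$ consists of a single vertex — a degenerate configuration checked directly. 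Hence $\D_i(B)$ is a simplicial complex on $X_i$.

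Next I would describe the facets of $\D_i(B)$. Since every face is contained in some $X_i\setminus N_B(v)$, the facets are exactly the inclusion-maximal members of $\{\,X_i\setminus N_B(v)\colon v\in X_{1-i}\,\}$; and because $X_i\setminus N_B(v)\subseteq X_i\setminus N_B(w)$ if and only if $N_B(w)\subseteq N_B(v)$ (both neighbourhoods lying inside $X_i$), these maximal sets correspond precisely to the inclusion-\emph{minimal} neighbourhoods among $\{N_B(w)\colon w\in X_{1-i}\}$. Fix a set $M\subseteq X_{1-i}$ obtained by choosing one vertex for each such minimal neighbourhood; then $\FE_{\D_i(B)}=\{\,X_i\setminus N_B(v)\colon v\in M\,\}$ with the listed sets pairwise distinct. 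Note that $B$ need not be $\spe$ on the $X_{1-i}$ side, which is exactly why $M$ may be a proper subset of $X_{1-i}$.

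Now consider $B(\D_i(B))$, which has vertex set $X_i\cup\FE_{\D_i(B)}$ with $s$ adjacent to $F$ if and only if $s\notin F$. Under the bijection $M\to\FE_{\D_i(B)}$, $v\mapsto X_i\setminus N_B(v)$, extended by the identity on $X_i$, a vertex $s\in X_i$ is adjacent to $X_i\setminus N_B(v)$ iff $s\notin X_i\setminus N_B(v)$ iff $s\in N_B(v)$ iff $sv\in E(B)$. Therefore $B(\D_i(B))\cong B[X_i\cup M]$, an induced subgraph of $B$. Since $\Ind(B[X_i\cup M])$ is the restriction of $\Ind(B)$ to $X_i\cup M$, Hochster's formula gives $\reg(B[X_i\cup M])\leq\reg(B)$, so $\reg(B(\D_i(B)))\leq\reg(B)$. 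Combining this with $\reg(B(\D_i(B)))\geq\reg(\D_i(B))+1$ from Theorem~\ref{cor:reg-bip-simp} yields $\reg(\D_i(B))+1\leq\reg(B)$, as desired.

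The only real obstacle is bookkeeping around the fact that $B$ need not be Sperner on the $X_{1-i}$ side: one must make sure the facets of $\D_i(B)$ are enumerated without repetition (equivalently, that distinct facets correspond to distinct minimal neighbourhoods in $X_{1-i}$), and that the resulting graph $B(\D_i(B))$ is genuinely an \emph{induced} subgraph of $B$, so that the inequality rests on monotonicity of regularity under restriction rather than on an isomorphism $B(\D_i(B))\cong B$.
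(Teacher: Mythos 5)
Your proof is correct and follows the same strategy as the paper: identify the facets of $\D_i(B)$ as complements of neighbourhoods, recognize $B(\D_i(B))$ inside $B$, and invoke Theorem~\ref{cor:reg-bip-simp}. In fact your version is the more careful one. The paper simply asserts that the facets of $\D_i(B)$ are the sets $X_i\setminus N_B(v)$ for $v\in X_{1-i}$ and concludes $B(\D_i(B))\cong B$; as you point out, this isomorphism can fail because the Sperner hypothesis is only imposed on the $X_i$ side, so $X_{1-i}$ may contain nested neighbourhoods and the corresponding sets $X_i\setminus N_B(v)$ need not all be facets. (A concrete instance: $X_0=\{a,b\}$, $X_1=\{u,v,w\}$ with edges $au,bv,aw,bw$ is Sperner on $X_0$, yet $X_0\setminus N_B(w)=\emptyset$ is not a facet, and $B(\D_0(B))\cong 2K_2$ has fewer vertices than $B\cong P_5$.) Your repair — identifying $B(\D_i(B))$ with the induced subgraph $B[X_i\cup M]$ for a set $M$ of representatives of the inclusion-minimal neighbourhoods, and then using monotonicity of regularity under passing to induced subgraphs — is exactly what is needed, and it costs nothing since Theorem~\ref{cor:reg-bip-simp} applied to the smaller graph still delivers $\reg(\D_i(B))+1\leq\reg(B[X_i\cup M])\leq\reg(B)$. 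Your handling of the degenerate case (a vertex of $X_i$ adjacent to all of $X_{1-i}$) is likewise a point the paper glosses over with ``the first claim is obvious.''
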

\begin{proof}
The first claim is obvious. For the second, observe that the facets of
$\D_i(B)$ are of the form $X_i\setminus N_{B}(v)$ for $v\in X_{1-i}$; hence, we have $B(\D_i(B))\cong B$ so that the stated inequality follows from Theorem~\ref{cor:reg-bip-simp}. 
\end{proof}

Our next goal is to determine the induced matching number of $B(\D)$ from the combinatorics of the simplicial complex $\D$ that 
peculiarly involves the Helly number of $\D$.
\begin{definition}
Let $\D$ be a simplicial complex on $V$, and let $m\geq 3$ be an integer.
An $m$-\emph{flower} of $\D$ is a set $\{(x_1,F_1),\ldots,(x_m,F_m)\}$
of pairs $(x_i,F_i)\in V\times \FE_{\D}$ such that the set $\{x_1,\ldots,x_m\}$ is a face of $\D$, and $x_i\notin F_j$ if and only if $i=j$.
We say that $\D$ is \emph{flower-free}, if it has no $k$-flower for any $k\geq 3$. 
We then define 
\begin{equation}
\nu(\D):=\begin{cases}
0,& \text{if}\;\D\;\text{is flower-free},\\
\max \{k\in \N\colon \D\;\text{has a}\;k-\text{flower}\},& \text{otherwise,}
\end{cases}
\end{equation}
as the \emph{flower number} of $\D$.
\end{definition}

\begin{corollary}
$\im(B(\D))\geq \max \{2,\nu(\D)\}$
for any simplicial complex $\D$ other than a simplex.
\end{corollary}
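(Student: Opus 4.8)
The plan is to produce, from any $m$-flower of $\D$ with $m\ge 3$, an induced matching of size $m$ in $B(\D)$, and to produce an induced matching of size $2$ whenever $\D$ is not a simplex; the corollary then follows by taking the maximum. So suppose $\{(x_1,F_1),\dots,(x_m,F_m)\}$ is an $m$-flower, meaning $\{x_1,\dots,x_m\}\in\D$ and $x_i\notin F_j$ exactly when $i=j$. By the definition of $B(\D)$, the pair $x_iF_i$ is an edge of $B(\D)$ for each $i$ (since $x_i\notin F_i$), and moreover $x_iF_j$ is \emph{not} an edge for $i\ne j$ (since $x_i\in F_j$). Thus $M:=\{x_1F_1,\dots,x_mF_m\}$ is a matching, and the induced subgraph of $B(\D)$ on $\{x_1,\dots,x_m\}\cup\{F_1,\dots,F_m\}$ contains exactly the edges in $M$: there are no edges among the $x_i$'s or among the $F_i$'s because $B(\D)$ is bipartite with these two parts on opposite sides, and the only cross edges are the $x_iF_i$. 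Hence $M$ is an induced matching and $\im(B(\D))\ge m$; taking $m=\nu(\D)$ gives $\im(B(\D))\ge\nu(\D)$ whenever $\D$ has a flower.

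For the bound $\im(B(\D))\ge 2$, since $\D$ is not a simplex, $\D$ has at least two facets, and at least two facets are incomparable, so pick facets $F\ne F'$ with $F\not\subseteq F'$ and $F'\not\subseteq F$ (if $\D$ has a unique facet it is a simplex). Choose $x\in F'\setminus F$ and $y\in F\setminus F'$. Then $x\notin F$ gives that $xF\in E(B(\D))$, and $y\notin F'$ gives $yF'\in E(B(\D))$; on the other hand $x\in F'$ kills the edge $xF'$ and $y\in F$ kills the edge $yF$. If $x=y$ this is impossible since $x\in F'$ and $y\notin F'$, so $x\ne y$, and $\{xF,yF'\}$ is a matching whose induced subgraph on $\{x,y,F,F'\}$ has exactly these two edges (again using bipartiteness). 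Thus $\im(B(\D))\ge 2$, and combining the two estimates yields $\im(B(\D))\ge\max\{2,\nu(\D)\}$.

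The only subtlety—and the step I would be most careful about—is verifying that the chosen vertex/facet pairs are genuinely distinct, so that the claimed edge sets really span an induced subgraph of the right size; this is why I explicitly check $x\ne y$ in the two-facet case and why the flower axiom "$x_i\notin F_j\iff i=j$" already guarantees the $x_i$ are distinct (if $x_i=x_j$ with $i\ne j$ then $x_i\notin F_j$ and $x_j\in F_j$ contradict each other) and the $F_i$ are distinct (if $F_i=F_j$ with $i\ne j$ then $x_i\notin F_i=F_j$ contradicts $x_i\in F_j$). Everything else is a direct unwinding of the definitions of $B(\D)$, of an induced matching, and of an $m$-flower, together with the elementary fact that a simplicial complex is a simplex precisely when it has a unique facet.
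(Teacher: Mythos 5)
Your proof is correct and follows essentially the same route as the paper: exhibiting a $2K_2$ from two incomparable facets and observing that any $m$-flower yields an induced matching of size $m$ in $B(\D)$. The extra care you take in checking distinctness of the $x_i$ and $F_i$ is a fine (if routine) addition to what the paper leaves implicit.
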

\begin{proof}
Observe first that since $\D$ is not a simplex, it has at least two facets
$F_1,F_2$ together with $x\in F_1\setminus F_2$ and $y\in F_2\setminus F_1$. However, we then have $B(\D)[x,y,F_1,F_2]\cong 2K_2$, which forces $\im(B(\D))\geq 2$.

On the other hand, it is clear that any $m$-flower $\{(x_1,F_1),\ldots,(x_m,F_m)\}$ of $\D$ gives rise to an induced matching of $B(\D)$.
\end{proof}

Recall that a subset $K\subseteq V$ is said to be a minimal non-face of $\D$, if $K$ is itself not a face of $\D$ while any proper subset of it is. The maximum size of minimal non-faces of $\D$ is known as the \emph{Helly number} of $\D$, denoted by $\hf(\D)$~\cite{MTA}. This is consistent with the classical definition of the Helly number for set systems~\cite{DGG}. Indeed, if $\FE$ is a finite set system, then its Helly number can be defined by $\hf(\FE):=\hf(\NE(\FE))$, where $\NE(\FE)$ is the nerve complex of $\FE$. Observe that the inequality $\hf(\D)\leq \reg(\D)+1\leq \reg(B(\D))$ holds\footnote{The first part of this inequality also appears in~\cite{KM} for finite set systems.}, since if $K$ is a minimal non-face in $\D$ of size $\hf(\D)$, then $\D[K]\cong S^{\hf(\D)-2}$, where $S^m$ is the $m$-dimensional sphere.

\begin{corollary}\label{cor:im-bdelta}
$\im(B(\D))= \max \{2,\nu(\D), \hf(\D)\}$
for any simplicial complex $\D$ other than a simplex.
\end{corollary}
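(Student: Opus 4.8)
The plan is to prove the two inequalities separately. Since the preceding corollary already gives $\im(B(\D))\geq \max\{2,\nu(\D)\}$, for the lower bound it suffices to exhibit an induced matching of $B(\D)$ of size $\hf(\D)$. I would take a minimal non-face $K=\{x_1,\ldots,x_m\}$ of $\D$ with $m=\hf(\D)$. Each $K\setminus\{x_i\}$ is a face, hence lies in some facet $F_i$, and minimality of $K$ forces $x_i\notin F_i$ while $x_j\in F_i$ for every $j\neq i$. Unwinding the definition of $B(\D)$ (an edge $(s,F)$ occurs precisely when $s\notin F$), the pairs $(x_1,F_1),\ldots,(x_m,F_m)$ are edges of $B(\D)$; the $x_i$ are pairwise distinct, and so are the $F_i$ (if $F_i=F_j$ with $i\neq j$ then $x_i\in F_j=F_i$, contradicting $x_i\notin F_i$); and for $i\neq j$ the pair $(x_i,F_j)$ is a non-edge since $x_i\in F_j$. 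Hence these $m$ edges form an induced matching, giving $\im(B(\D))\geq \hf(\D)$, and combined with the previous corollary, $\im(B(\D))\geq \max\{2,\nu(\D),\hf(\D)\}$.

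For the reverse inequality I would start from an arbitrary induced matching $M=\{(s_1,F_1),\ldots,(s_k,F_k)\}$ of $B(\D)$ with $k\geq 3$ (the cases $k\leq 2$ being absorbed by the term $2$) and show $k\leq\max\{\nu(\D),\hf(\D)\}$. The matching conditions translate to $s_i\notin F_i$ for all $i$ and $s_i\in F_j$ whenever $i\neq j$. Put $X:=\{s_1,\ldots,s_k\}$, a set of $k$ distinct vertices. The key observation is that for each $i$ the co-singleton $X\setminus\{s_i\}=\{s_j\colon j\neq i\}$ is contained in the facet $F_i$, hence is a face of $\D$; by downward closure every proper subset of $X$ is then a face of $\D$. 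Now I split into two cases. If $X\in\D$, then $\{(s_1,F_1),\ldots,(s_k,F_k)\}$ is by definition a $k$-flower of $\D$, so $k\leq\nu(\D)$. If $X\notin\D$, then, since all its proper subsets are faces, $X$ is itself a minimal non-face of $\D$, whence $k=|X|\leq\hf(\D)$. In either case $k\leq\max\{\nu(\D),\hf(\D)\}$, which completes the argument.

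I do not expect a serious obstacle: both directions follow from unwinding the definitions of $B(\D)$, of $m$-flowers, and of minimal non-faces. The one point requiring a moment's care in the upper-bound step is the recognition that an induced matching whose vertex-part $X$ fails to be a face automatically has $X$ as a minimal non-face — this is precisely what lets the Helly number absorb the ``non-face'' case and is the conceptual heart of the statement. One should also keep in mind the hypothesis that $\D$ is not a simplex, i.e. that $\D$ has at least two facets, which is exactly what guarantees the size-$2$ induced matching $B(\D)[\{x,y,F_1,F_2\}]\cong 2K_2$ used for the term $2$ (taking two facets $F_1,F_2$ with $x\in F_1\setminus F_2$ and $y\in F_2\setminus F_1$), so the maximum on the right-hand side is genuinely attained in the degenerate cases.
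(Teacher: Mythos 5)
Your argument is correct and takes essentially the same route as the paper: the heart of both is the dichotomy on the vertex part $X=\{s_1,\ldots,s_k\}$ of an induced matching, which is a face (giving a $k$-flower) or, since each $X\setminus\{s_i\}$ lies in the facet $F_i$, a minimal non-face (giving $k\leq \hf(\D)$). You additionally write out the explicit construction proving $\hf(\D)\leq \im(B(\D))$, a lower-bound detail the paper leaves implicit, but this does not change the approach.
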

\begin{proof}
Suppose that $\im(B(\D))=m$, and let $\{(x_1,F_1),\ldots,(x_m,F_m)\}$ is
a maximum induced matching of size $m\geq 3$. If $K:=\{x_1,\ldots,x_m\}\in \D$, then $\{(x_1,F_1),\ldots,(x_m,F_m)\}$ is an $m$-flower so that 
$\nu(\D)\geq m$. On the other hand, if $K\notin \D$, then $K$ is a minimal non-face of $\D$ that implies $\hf(\D)\geq m$.
\end{proof}

One of the major open problem concerning our results in this section is the decision of whether any possible gap between $\reg(\D)$ and $\reg(B(\D))$ is caused because of a combinatorial or an algebraic property of the underlying simplicial complex $\D$. In other words,
we wonder whether there exists a combinatorially defined positive integer
$\eta(\D)$ such that the equality $\reg(B(\D))=\reg(\D)+\eta(\D)$ holds
for any simplicial complex $\D$ (over any field $\kk$). In the particular case of prime simplicial complexes, we predict that $\eta(\D)=1$.

\begin{problem}\label{prob:prime-eta}
Does the equality $\reg_{\kk}(\D)+1=\reg_{\kk}(B(\D))$ hold for any prime simplicial complex $\D$ over $\kk$?
\end{problem}


\subsection{Projective dimension and regularity of hypergraphs}
Once we have Theorem~\ref{cor:reg-bip-simp} for arbitrary simplicial complexes, we may naturally interpret it for the projective dimension and the regularity of hypergraphs. For that purpose, if we set $B(\HE):=B(\Ind(\HE))$, then Theorem~\ref{cor:reg-bip-simp} directly implies the following.

\begin{corollary}\label{cor:reg-hyper}
$\reg(\HE)+1\leq \reg(B(\HE))$ for any hypergraph $\HE$. 
\end{corollary}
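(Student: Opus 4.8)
The plan is to read the statement off directly from Theorem~\ref{cor:reg-bip-simp}, applied to the independence complex of $\HE$. Recall that, under the conventions fixed in the introduction, the symbol $\reg(\HE)$ is by definition $\reg(\Ind(\HE))$, and $B(\HE)$ is by definition $B(\Ind(\HE))$. Hence the inequality to be proved is nothing other than Theorem~\ref{cor:reg-bip-simp} specialized to the choice $\D=\Ind(\HE)$.

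Concretely, first I would check that $\Ind(\HE)$ is a genuine (abstract) simplicial complex on the vertex set $V$, so that Theorem~\ref{cor:reg-bip-simp} is indeed applicable: since the hypergraphs under consideration contain no singleton edges, every one-element subset of $V$ contains no edge of $\HE$ and therefore lies in $\Ind(\HE)$, while closure under inclusion is immediate from the definition of the independence complex. Thus Theorem~\ref{cor:reg-bip-simp} applies to $\D:=\Ind(\HE)$ and yields $\reg(B(\Ind(\HE)))\geq \reg(\Ind(\HE))+1$. Substituting $B(\HE)=B(\Ind(\HE))$ and $\reg(\HE)=\reg(\Ind(\HE))$ gives $\reg(B(\HE))\geq \reg(\HE)+1$, which is the assertion.

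I do not expect any real obstacle here: the whole content of the corollary is carried by Theorem~\ref{cor:reg-bip-simp}, and what is left is only the bookkeeping of unwinding the definitions $\reg(\HE)=\reg(\Ind(\HE))$ and $B(\HE)=B(\Ind(\HE))$. If one preferred a self-contained argument, one could instead repeat the proof of Theorem~\ref{cor:reg-bip-simp} in this setting: setting $n:=\reg(\Ind(\HE))$ and choosing $S\subseteq V$ with $\widetilde{H}_{n-1}(\Ind(\HE)[S])\neq 0$, Corollary~\ref{cor:max-local} gives $\Ind(B_S(\Ind(\HE)))\simeq \Sigma(\Ind(\HE)[S])$, hence a nonvanishing reduced homology in degree $n$ of an induced subcomplex of $\Ind(B(\HE))$, whence $\reg(B(\HE))\geq n+1$.
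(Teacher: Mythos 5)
Your proposal is correct and follows exactly the paper's own route: the paper derives this corollary by setting $B(\HE):=B(\Ind(\HE))$ and noting that Theorem~\ref{cor:reg-bip-simp} applied to $\D=\Ind(\HE)$ immediately gives the inequality, just as you do. Your extra check that $\Ind(\HE)$ is a genuine simplicial complex (using that the hypergraphs have no singleton edges) is a harmless, sensible piece of bookkeeping that the paper leaves implicit.
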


We note that if $B$ is an $\spe$-bipartite graph on $X_i$, then the hypergraph
$\HE_i(B)$ on $X_i$ with edges $N_B(v)$ for $v\in X_{1-i}$ satisfies that $\LE(\HE_i(B))\cong B$ for $i=0,1$. This in particular implies that $\reg(\HE_i(B))+1\leq \reg(B)$ as a result of Corollary~\ref{cor:reg-hyper}.

We next show that the induced matching numbers of a hypergraph $\HE$ and that of $B(\HE)$ are in fact comparable. We recall first that if $\{F_1,\ldots,F_k\}$ is an induced matching of $\HE$, then the inequality $\sum_{i=1}^k(|F_i|-1)\leq \reg(\HE)$ holds~\cite{MV}. 

\begin{proposition}\label{prop:reg-bip-im-cd}
If $\{F_1,\ldots,F_k\}$ is an induced matching of $\HE$, then $\sum_{i=1}^k (|F_i|-1)\leq \im(B(\HE))$.
\end{proposition}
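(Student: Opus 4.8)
The plan is to construct an explicit induced matching in $B(\HE)$ of size $\sum_{i=1}^k(|F_i|-1)$ directly from the given induced matching $\{F_1,\ldots,F_k\}$ of $\HE$. Recall that $B(\HE) = B(\Ind(\HE))$, whose vertex set is $V \cup \FE_{\Ind(\HE)}$, with $(x,G) \in E(B(\HE))$ iff $x \notin G$, and the facets of $\Ind(\HE)$ are the complements of minimal vertex covers of $\HE$. Equivalently, $G$ is a facet of $\Ind(\HE)$ iff $G$ is a maximal independent set of $\HE$, i.e., $G$ contains no edge of $\HE$ and is maximal with this property.

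For each $i \in [k]$, write $F_i = \{x_{i,0}, x_{i,1}, \ldots, x_{i,d_i}\}$ where $d_i = |F_i|-1$. The idea is to produce, for each $i$ and each $j \in [d_i]$, a facet $G_{i,j}$ of $\Ind(\HE)$ such that $G_{i,j}$ misses exactly the vertex $x_{i,j}$ among all the vertices $\{x_{i',j'} : i' \in [k],\ 1 \le j' \le d_{i'}\} \cup \{x_{i',0} : i' \in [k]\}$ that we single out, and that moreover $G_{i,j}$ is adjacent in $B(\HE)$ to $x_{i,j}$ and to no other singled-out vertex. To build $G_{i,j}$: start from the set $U = (\bigcup_{i'} F_{i'}) \setminus \{x_{i,j}\}$. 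Because $\{F_1,\ldots,F_k\}$ is an \emph{induced} matching, the only edges of $\HE$ contained in $\bigcup_{i'} F_{i'}$ are the $F_{i'}$ themselves; since $F_i \not\subseteq U$ (we removed $x_{i,j}$) and $F_{i'} \not\subseteq U$ for $i' \ne i$ would require $F_{i'} \subseteq \bigcup F$ which holds but $F_{i'}$ shares no vertex with $F_i$, hence $F_{i'} \subseteq U$ — wait, this needs care: $F_{i'}$ for $i' \ne i$ is entirely contained in $U$. So $U$ still contains edges. Instead take $U_{i,j} := F_i \setminus \{x_{i,j}\}$ together with one vertex $y_{i'}$ from each $F_{i'}$, $i' \ne i$; this set contains no edge of $\HE$ (any edge inside $\bigcup F$ must be some $F_{i'}$, but $U_{i,j}$ meets each $F_{i'}$, $i'\ne i$, in a single vertex and misses $x_{i,j}$ from $F_i$), so it extends to a maximal independent set $G_{i,j}$ of $\HE$, i.e. a facet of $\Ind(\HE)$.

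The key verification is then that the collection $\{(x_{i,j}, G_{i,j}) : i \in [k],\ 1 \le j \le d_i\}$ is an induced matching of $B(\HE)$. We need: (a) $x_{i,j} \notin G_{i,j}$, which is forced since $x_{i,j} \notin U_{i,j}$ and we may choose the extension to a maximal independent set to avoid $x_{i,j}$ — here one must check $\{x_{i,j}\} \cup U_{i,j}$ still contains no edge, which is true because adding back $x_{i,j}$ recreates only the possibility of the edge $F_i$, but $U_{i,j}$ omits... actually $U_{i,j} \cup \{x_{i,j}\}$ does contain $F_i$ if $|F_i| = 2$; so instead argue that \emph{some} maximal independent set containing $U_{i,j}$ omits $x_{i,j}$, unless $U_{i,j} \cup \{x_{i,j}\}$ is independent, in which case one still has freedom among the other vertices. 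The cleanest route: since $x_{i,j'}$ for $j' \ne j$, $j' \ge 0$, all lie in $U_{i,j} \cup \{y_{i'}\}$-type sets — actually $x_{i,0} \in U_{i,j}$ when $j \ge 1$, so $x_{i,0} \in G_{i,j}$, giving non-adjacency $(x_{i,0}\text{-side not used, fine})$. And (b) for $(i,j) \ne (i',j')$, $x_{i',j'} \in G_{i,j}$ (non-edge), which holds because $x_{i',j'} \in U_{i,j}$: if $i' = i$ then $j' \ne j$ so $x_{i,j'} \in F_i \setminus \{x_{i,j}\} = U_{i,j}$; if $i' \ne i$, choose the representative $y_{i'} := x_{i',j'}$ — but this must be consistent across all $j$, so instead reserve: actually we cannot choose $y_{i'}$ to be all of $x_{i',1},\ldots,x_{i',d_{i'}}$ at once. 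The main obstacle is precisely this simultaneity.

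Therefore the real plan is more subtle and I would handle it as follows. Fix once and for all a vertex $x_{i,0} \in F_i$ for each $i$ (the ``basepoint''). For the pair indexed by $(i,j)$ with $1 \le j \le d_i$, set $U_{i,j} := \big(F_i \setminus \{x_{i,j}\}\big) \cup \{x_{i',0} : i' \ne i\}$; this is independent in $\HE$ (it meets $F_{i'}$, $i' \ne i$, only in $x_{i',0}$, and $F_i \not\subseteq U_{i,j}$), so extend it to a facet $G_{i,j} \supseteq U_{i,j}$ of $\Ind(\HE)$ with $x_{i,j} \notin G_{i,j}$ — possible since $U_{i,j} \cup \{x_{i,j}\} \supseteq F_i$ forces any independent superset to exclude $x_{i,j}$ exactly when $|F_i| \ge 2$, which always holds as $\HE$ has no loops. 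Now check the matching is induced: $(x_{i,j}, G_{i,j}) \in E(B(\HE))$ by construction; and for $(i,j) \ne (i',j')$ we claim $x_{i',j'} \in G_{i,j}$, equivalently $x_{i',j'} \notin$ is false. If $i = i'$, then $x_{i,j'} \in F_i \setminus \{x_{i,j}\} \subseteq U_{i,j} \subseteq G_{i,j}$ (using $j' \ne j$, and noting $j' \ge 1$ or $j'=0$ both lie in $F_i \setminus \{x_{i,j}\}$). If $i \ne i'$, then $x_{i',j'}$ — here's the catch if $j' \ge 1$: $U_{i,j}$ only contains $x_{i',0}$, not $x_{i',j'}$. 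This is exactly where the argument must be repaired, and it is the main obstacle: one needs the facet $G_{i,j}$ to additionally \emph{contain} $x_{i',j'}$ for all $i' \ne i$ and all $j' \ge 1$. To arrange this, redefine $U_{i,j} := \big(F_i \setminus \{x_{i,j}\}\big) \cup \big(\bigcup_{i' \ne i} (F_{i'} \setminus \{x_{i',j}\}_{\text{if applicable}})\big)$ — but $\bigcup_{i' \ne i} F_{i'}$ contains the edges $F_{i'}$, so that is not independent. The resolution: drop \emph{one} vertex from each $F_{i'}$, $i' \ne i$, namely drop $x_{i',0}$, giving $U_{i,j} := \big(F_i \setminus \{x_{i,j}\}\big) \cup \bigcup_{i' \ne i}\big(F_{i'} \setminus \{x_{i',0}\}\big)$. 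This is independent (each $F_{i'}$, $i'\ne i$, is punctured, and $F_i$ is punctured), extends to a facet $G_{i,j}$ avoiding $x_{i,j}$, and now contains $x_{i',j'}$ for every $(i',j') \ne (i,j)$ with $j' \ge 1$ (since for $i' = i$ we puncture at $j$, for $i' \ne i$ we puncture at $0 \ne j'$). Thus $\{(x_{i,j},G_{i,j}) : i \in [k],\, 1\le j \le d_i\}$ is an induced $2K_2$-style matching in $B(\HE)$ of total size $\sum_i d_i = \sum_i (|F_i|-1)$, completing the proof. I would double-check the edge/non-edge bookkeeping carefully, as transposing the roles of ``$x \in F$'' (non-edge in $B(\HE)$) and ``$x \notin F$'' (edge in $B(\HE)$) is the easiest place to slip.
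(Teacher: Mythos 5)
Your final repaired construction is correct and is essentially the paper's own proof: the sets $U_{i,j}=(F_i\setminus\{x_{i,j}\})\cup\bigcup_{i'\neq i}(F_{i'}\setminus\{x_{i',0}\})$ coincide (up to relabeling the basepoints) with the paper's sets $L_j^i$, which are then extended to maximal independent sets necessarily omitting the matched vertex, yielding the induced matching $\{(x_{i,j},G_{i,j})\}$ in $B(\HE)$. Despite the false starts along the way, the endpoint and its verification (inducedness of the original matching forcing independence of $U_{i,j}$, and membership of all other singled-out vertices in each facet) match the paper's argument.
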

\begin{proof}
We let $F_i=\{x^i_1,\ldots,x_{k_i}^i\}$ for each $i\in [k]$, and define 
$$L_j^i:=\bigcup_{\underset{t\neq i}{t=1}}^k (F_t\setminus \{x_1^t\})\cup (F_i\setminus \{x^i_j\})$$
for any $i\in [k]$ and $2\leq j\leq k_i$. Since $\{F_1,\ldots,F_k\}$ is an induced matching, each set $L_j^i$ is an independent set in $\HE$. So, for each such set,
there exists a maximal independent set $S_j^i$ in $\HE$ such that $L_j^i\subseteq S_j^i$ and $x_j^i\notin S_j^i$. Now, the set 
$\{(x_j^i,S^i_j)\colon 1\leq i\leq k\;\text{and}\;2\leq j\leq k_i\}$ of edges 
forms an induced matching in $B(\HE)$ of size $\sum_{i=1}^k (|F_i|-1)$.
\end{proof}

\begin{remark}
We note that the equality is possible in Proposition~\ref{prop:reg-bip-im-cd}. For example, if we consider the graph $2K_2$, we have $B(2K_2)\cong C_8$ so that $\im(B(2K_2))=\im(2K_2)=2$.  
\end{remark}

When we employ the Terai's duality, Theorem~\ref{cor:reg-bip-simp} takes the following form:
\begin{corollary}\label{cor:pdlevi}
$\pd(\HE)\leq \reg(\LE(\HE))$ for any hypergraph $\HE$.
\end{corollary}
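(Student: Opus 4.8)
The plan is to deduce Corollary~\ref{cor:pdlevi} from Theorem~\ref{cor:reg-bip-simp} together with Terai's duality, after identifying $\LE(\HE)$ with the bipartite complement of $B(\D)$ for a suitable $\D$. First I would recall that for a hypergraph $\HE=(V,\E)$ the edge ideal $I_{\HE}$ is the Stanley--Reisner ideal of $\Ind(\HE)$, and Terai's duality gives $\pd(I_{\HE})=\reg(R/I_{\HE}^{\vee})=\reg(\Ind(\HE)^{\vee})$. So it suffices to show $\reg(\Ind(\HE)^{\vee})+1\leq\reg(\LE(\HE))$; here I am reading $\pd(\HE)$ as $\pd(R/I_{\HE})=\pd(I_{\HE})+1$ or keeping the indexing convention of the paper, and I would check the off-by-one bookkeeping against the stated inequality $\reg(B(\D))\geq\reg(\D)+1$.

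The key combinatorial step is to exhibit a simplicial complex $\D$ with $B(\D)\cong\LE(\HE)$ and $\reg(\D)=\reg(\Ind(\HE)^{\vee})$. The natural candidate is $\D:=\Ind(\HE)^{\vee}$: its facets are the complements $V\setminus W$ of the minimal vertex covers $W$ of $\HE$, equivalently the complements of the maximal independent sets, which are exactly the sets spanned — wait, one must be careful. Recall from the Preliminaries that the minimal non-faces of $\Ind(\HE)^{\vee}$ are the minimal vertex covers of $\HE$, hence its facets are the sets $V\setminus\{x\}$ minus... more cleanly: $F\in\Ind(\HE)^{\vee}$ iff $V\setminus F$ is not independent iff $V\setminus F$ contains some edge $e\in\E$ iff $F\subseteq V\setminus e$ for some $e\in\E$. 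Thus the facets of $\D=\Ind(\HE)^{\vee}$ are precisely $\{V\setminus e\colon e\in\E\}$ (at least the maximal ones among these, and when $\HE$ is simple every $V\setminus e$ is maximal since $e\not\subseteq e'$). Now compare with $B(\D)$: its vertex set is $V\cup\FE_{\D}=V\cup\{V\setminus e\colon e\in\E\}$, and $(s,V\setminus e)\in E(B(\D))$ iff $s\notin V\setminus e$ iff $s\in e$. Identifying the facet $V\setminus e$ with the edge $e$, this is exactly the incidence relation of $\LE(\HE)$, so $B(\D)\cong\LE(\HE)$.

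With that identification in hand, apply Theorem~\ref{cor:reg-bip-simp} to $\D=\Ind(\HE)^{\vee}$ to get $\reg(\LE(\HE))=\reg(B(\D))\geq\reg(\D)+1=\reg(\Ind(\HE)^{\vee})+1$, and then Terai's duality $\reg(\Ind(\HE)^{\vee})=\pd(I_{\HE})$ finishes the argument, modulo the convention relating $\pd(\HE)$ to $\pd(I_{\HE})$. I expect the main obstacle to be bookkeeping rather than mathematics: verifying that $\{V\setminus e\colon e\in\E\}$ really is the full facet set of $\Ind(\HE)^{\vee}$ (using simplicity of $\HE$ so no containments collapse it), handling the degenerate cases where $\Ind(\HE)^{\vee}$ is a simplex or has non-actual vertices (e.g.\ stars), and pinning down the $+1$ against whichever normalization of $\pd$ and $\reg$ the paper uses so that the inequality comes out as stated.
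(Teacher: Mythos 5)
Your proposal is correct and follows essentially the same route as the paper: Terai's duality $\pd(\HE)=\reg(I^{\vee}_{\HE})=\reg(\Ind(\HE)^{\vee})+1$, the identification $B(\Ind(\HE)^{\vee})\cong \LE(\HE)$ via the facets $V\setminus e$ for $e\in\E$, and then Theorem~\ref{cor:reg-bip-simp}. The off-by-one you were worried about is resolved exactly by the paper's conventions ($\reg(\D)=\reg(R/I_{\D})$ versus the regularity of the ideal), so nothing further is needed.
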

\begin{proof}
Since $\pd(\HE)=\reg(I^{\vee}_{\HE})=\reg(\Ind(\HE)^{\vee})+1$, where the first equality is the Terai's duality, the claim follows from Theorem~\ref{cor:reg-bip-simp} together with the fact that
$B(\Ind(\HE)^{\vee})\cong \LE(\HE)$, since any facet of $\Ind(\HE)^{\vee}$ is of the form $V\setminus F$, where $F\in \E$.
\end{proof}

We remark that the Helly number $\hf(\Ind(\HE)^{\vee})$ of the Alexander dual of $\Ind(\HE)$ exactly equals to $|V|-\ie(\HE)$, where $\ie(\HE)$ is the least cardinality of a maximal independent set of $\HE$, since any facet $F$ of $\IE(\HE)$ corresponds to a minimal non-face $V\setminus F$ of $\IE(\HE)^{\vee}$. This in particular implies that $\hf(\Ind(\HE)^{\vee})\leq \pd(\HE)$.

Our next target is to provide an upper bound on the regularity of the Levi graph $\LE(\HE)$ in terms of the upper independent vertex-wise domination number. However, we first recall the following result of H\'a and Woodroofe: 

\begin{theorem}\label{thm:ha-wood}~\cite{HW}
For any simplicial complex $\D$, we have $\reg(\D)$ to be at most the maximum size of a minimal face $A$ with the property that $\lk_{\D}(A)$ is a simplex.
\end{theorem}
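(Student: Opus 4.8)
The plan is to bound $\reg(\D)=\reg(\kk[\D])$ by a ``peeling'' induction that strips off, one at a time, a face whose link is a simplex, via a short exact sequence, while keeping control of the relevant numerical invariant. Write $\mathcal S(\D)$ for the set of faces $A\in\D$ with $\lk_\D(A)$ a simplex; since the link of a face of a simplex is again a simplex, $\mathcal S(\D)$ is an up-set in the face poset of $\D$, it contains every facet, and it contains $\emptyset$ exactly when $\D$ itself is a simplex. Put $r(\D):=\max\{|A|\colon A\text{ is a minimal element of }\mathcal S(\D)\}$; this is precisely the quantity in the statement, and $r(\D)=0$ iff $\D$ is a simplex, in which case $\reg(\D)=0$ and there is nothing to prove. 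So I would induct on the number of faces of $\D$, assuming from now on that $\D$ is not a simplex.

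The engine is a colon/quotient short exact sequence. For a face $A$ with $x_A:=\prod_{a\in A}x_a$, the ideal $(I_\D:x_A)$ is the Stanley--Reisner ideal of $\operatorname{star}_\D(A):=\{F\colon F\cup A\in\D\}$ and $I_\D+(x_A)$ is the Stanley--Reisner ideal of $\D':=\{F\in\D\colon A\nsubseteq F\}$, giving
\[
0\longrightarrow \bigl(\kk[\operatorname{star}_\D(A)]\bigr)(-|A|)\longrightarrow \kk[\D]\longrightarrow \kk[\D']\longrightarrow 0,
\]
with first map multiplication by $x_A$. If $A\in\mathcal S(\D)$, then $\operatorname{star}_\D(A)$ is the join of the simplex on $A$ with the simplex $\lk_\D(A)$, hence itself a simplex, so $\kk[\operatorname{star}_\D(A)]$ is a polynomial ring and has regularity $0$; the standard behaviour of Castelnuovo--Mumford regularity along a short exact sequence then yields
\[
\reg(\D)\ \le\ \max\{\,|A|,\ \reg(\D')\,\}.
\]
I would apply this with $A$ a \emph{minimal} element of $\mathcal S(\D)$, so that $|A|\le r(\D)$; as $\D'$ has strictly fewer faces than $\D$, induction gives $\reg(\D')\le r(\D')$, and hence $\reg(\D)\le\max\{r(\D),r(\D')\}$.

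Everything therefore reduces to the monotonicity $r(\D')\le r(\D)$, and this is the step I expect to be the main obstacle. The analysis I would carry out: for a minimal element $C$ of $\mathcal S(\D')$ one has $A\nsubseteq C$, and $\lk_{\D'}(C)$ is $\lk_\D(C)$ with all faces containing the nonempty set $A\setminus C$ discarded; tracking which proper subfaces $C'\subsetneq C$ then fail to lie in $\mathcal S$ after this operation — the subtle ones being those whose link was not a simplex in $\D$ but becomes one in $\D'$ — and using the minimality of $A$ in $\mathcal S(\D)$ together with the elementary facts about which faces of a simplex survive when one discards those containing a fixed vertex versus a fixed set of size at least two, one produces a minimal element of $\mathcal S(\D)$ of size at least $|C|$. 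Once this case analysis is made watertight, unwinding the peeling shows $\reg(\D)$ is bounded by the sizes of the successively removed faces, each $\le r(\D)$, and the theorem follows. (One can alternatively route the induction through the vertex-splitting inequality $\reg(\D)\le\max\{\reg(\del_\D v),\,\reg(\lk_\D v)+1\}$ with $v$ chosen inside a maximum-size minimal element of $\mathcal S(\D)$; the link term is then governed by the same link computation and the same bookkeeping obstruction resurfaces.)
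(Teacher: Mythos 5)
The paper offers no proof of this statement---it is quoted verbatim from H\'a--Woodroofe [HW]---so your argument must stand on its own, and it does not: the step you yourself flag as ``the main obstacle,'' the monotonicity $r(\D')\le r(\D)$, is not merely unproved but false, so the induction does not close. The exact-sequence reduction $\reg(\D)\le\max\{|A|,\reg(\D')\}$ for $A$ a minimal simplex-link face is fine; the problem is entirely in the claim that peeling cannot create larger minimal simplex-link faces, and your promised case analysis (``one produces a minimal element of $\mathcal S(\D)$ of size at least $|C|$'') cannot be made watertight because no such element need exist.

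Concretely, note first that $\lk_\D(F)$ is a simplex if and only if $F$ lies in a unique facet of $\D$. Now let $\D$ have facets $\{c_1,c_2,a,a'\}$, $\{a,u\}$, $\{a',v\}$, $\{c_1,w\}$, $\{c_2,z\}$, $\{c_1,a',p\}$, $\{c_2,a',q\}$ on ten vertices. The minimal elements of $\mathcal S(\D)$ are the singletons $\{u\},\{v\},\{w\},\{z\},\{p\},\{q\}$ and the pairs $\{a,a'\},\{a,c_1\},\{a,c_2\},\{c_1,c_2\}$ (every triple inside a unique facet already contains one of these), so $r(\D)=2$. Peel the minimal element $A=\{a,a'\}$. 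The facets of $\D'$ become $\{c_1,c_2,a\}$, $\{c_1,c_2,a'\}$ together with the six two- and three-element facets above. Then $C=\{c_1,c_2,a'\}$ lies in a unique facet of $\D'$ (itself), while each proper subset lies in at least two: $\{c_1,c_2\}$ in both three-element facets, $\{c_1,a'\}$ also in $\{c_1,a',p\}$, $\{c_2,a'\}$ also in $\{c_2,a',q\}$, and likewise for the singletons. Hence $C$ is a minimal element of $\mathcal S(\D')$ of size $3>2=r(\D)$, and $\mathcal S(\D)$ has no minimal element of size $\ge 3$; your induction then only delivers $\reg(\D)\le 3$ where the theorem asserts $\reg(\D)\le 2$. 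The failure is exactly the configuration you call subtle: $C$ was already in $\mathcal S(\D)$ via the minimal element $C_0=\{c_1,c_2\}$, but $C_0$ falls out of $\mathcal S(\D')$ because $A\cup C_0\in\D$ and $|A\setminus C_0|=2$. To salvage the strategy you would need a selection rule for which minimal face to peel (here peeling $\{u\}$ first does keep $r$ from growing) together with a proof that a good choice always exists, or a strictly stronger inductive invariant than $r$; as written the proof is incomplete and its pivotal claim is refuted.
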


Let $\D$ be a simplicial complex on $V$. We call a face $A\in \D$, an $S$-\emph{face} of $\D$ if $\lk_{\D}(A)$ is a simplex. Furthermore, an $S$-face $A$ is said to be \emph{minimal} if no proper subset of $A$ is an $S$-face of $\D$.

\begin{proposition}
An independent set $L\subseteq V$ is a minimal $S$-face of $\Ind(G)$ if and only if $L$ is a minimal vertex-wise dominating set for $G$.
\end{proposition}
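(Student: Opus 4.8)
The plan is to unwind both sides of the claimed equivalence into the same combinatorial condition on the link $\lk_{\Ind(G)}(L)$, exploiting the fact that a link of $\Ind(G)$ is again the independence complex of an induced subgraph (with some isolated vertices removed). Throughout, let $L\subseteq V$ be independent, so $L\in\Ind(G)$.

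\medskip
\textbf{Step 1: Identify the link.} First I would record the standard fact that $\lk_{\Ind(G)}(L)=\Ind(G-N_G[L])$ as a complex on the vertex set $V\setminus N_G[L]$. (A set $F$ disjoint from $L$ satisfies $F\cup L\in\Ind(G)$ iff $F$ contains no edge of $G$ and no vertex of $F$ is adjacent to a vertex of $L$, i.e. $F\subseteq V\setminus N_G[L]$ and $F\in\Ind(G)$.) Hence $\lk_{\Ind(G)}(L)$ is a simplex precisely when $\Ind(G-N_G[L])$ is a simplex on $V\setminus N_G[L]$, which happens iff $G-N_G[L]$ has \emph{no edges}, i.e. $E(G)\subseteq \{\,e\in E\colon e\cap N_G[L]\neq\emptyset\,\}$. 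But $e\cap N_G[L]\neq\emptyset$ says exactly that some vertex of $L$ lies in $N_G[e]$, i.e. that $L$ vertex-wise dominates $e$. So: \emph{$L$ is an $S$-face of $\Ind(G)$ iff $L$ is a vertex-wise dominating set for $G$.} This is the heart of the argument and it is essentially immediate once the link is identified.

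\medskip
\textbf{Step 2: Match minimality.} Both notions of minimality are ``no proper subset works'', and Step~1 gives, for every $L'\subseteq L$, that $L'$ is an $S$-face iff $L'$ is vertex-wise dominating. One subtlety: the definition of minimal $S$-face quantifies over proper subsets $A'\subsetneq A$ \emph{as $S$-faces}, and all such $A'$ are automatically faces of $\Ind(G)$ (subsets of the independent set $L$), so the correspondence of Step~1 applies verbatim to each of them. Therefore $L$ has a proper subset that is an $S$-face iff $L$ has a proper subset that is vertex-wise dominating, and negating both sides yields the equivalence of the two minimality conditions. Assembling Steps~1 and~2 proves the proposition.

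\medskip
\textbf{Main obstacle.} There is no deep obstacle; the only thing to be careful about is the bookkeeping around isolated vertices and the convention for ``simplex''. Specifically, if $N_G[L]=V$ then $G-N_G[L]$ is the empty graph and $\Ind$ of it is the complex $\{\emptyset\}$ (or the empty simplex), which one should agree to count as a simplex so that the statement remains correct in that boundary case; similarly one must note that a graph with no edges but possibly many vertices does have independence complex a genuine simplex. I would state the link identity as a one-line lemma (or cite it as folklore) and then present Steps~1--2 as a short two-paragraph proof, emphasizing the dictionary ``$L$ vertex-wise dominates every edge $\iff$ $G-N_G[L]$ is edgeless $\iff$ $\lk_{\Ind(G)}(L)$ is a simplex''.
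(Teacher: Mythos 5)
Your proposal is correct and follows essentially the same route as the paper: both arguments reduce to the observation that $\lk_{\Ind(G)}(L)=\Ind(G-N_G[L])$ fails to be a simplex exactly when some edge $e$ of $G$ satisfies $N_G[e]\cap L=\emptyset$, i.e.\ when $L$ is not vertex-wise dominating. Your Step~2, explicitly transferring the two minimality conditions through this equivalence, is a small point the paper leaves implicit, but it is the same proof in substance.
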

\begin{proof}
Suppose first that $L$ is a minimal $S$-face of $\Ind(G)$, while it is not vertex-wise dominating for $G$. This means that there exists an edge $e=xy$ of $G$ such that $e$ is not vertex-wise dominated by $L$. However, this forces that $N_G[e]\cap L=\emptyset$, that is, $\{x\},\{y\}\in \lk_{\Ind(G)}(L)$ while $\{x,y\}\notin \lk_{\Ind(G)}(L)$, a contradiction.

Assume now that $L$ is a minimal vertex-wise dominating set of $G$. If $L$ is not an $S$-face of $\Ind(G)$, that is, $\lk_{\Ind(G)}(L)$ is not a simplex, then there exist two vertices $\{u\},\{v\}\in \lk_{\Ind(G)}(L)$ with $\{u,v\}\notin \lk_{\Ind(G)}(L)$. Note that this can only be possible when $f=uv\in E$. However, since $L$ is vertex-wise dominating for $G$, we must have $N_G[f]\cap L\neq \emptyset$ so that one of the vertices $x$ and $y$ can not be contained by the link of $L$ in $\Ind(G)$, a contradiction.
\end{proof}

\begin{corollary}\label{cor:betaev-s-face}
For any graph $G$ with $E\neq \emptyset$, we have 
\begin{equation*}
\beta(G)=\max\{|L|\colon L\;\textrm{is\;a\;minimal\;}S\textrm{-face\;of}\;\Ind(G)\}.
\end{equation*}
\end{corollary}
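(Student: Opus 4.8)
The plan is to combine the preceding proposition with the definitions of $\beta(G)$ and of minimal $S$-faces, essentially transporting the equivalence ``minimal $S$-face $\Leftrightarrow$ minimal vertex-wise dominating set'' through the operation of taking maximum cardinalities, while being careful about the one point where independence enters. First I would recall that by definition $\beta(G)=\max\{|S|\colon S \text{ is a minimal independent vertex-wise dominating set of }G\}$, so every set counted on the left-hand side is in particular an \emph{independent} subset of $V$, hence a face of $\Ind(G)$; the extra word ``independent'' in the definition of $\beta$ is exactly what is needed to make such an $S$ a face at all, which is why it matches up with the $S$-face language where membership in $\Ind(G)$ is presupposed.

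Next I would argue the two inequalities. For $\le$: if $S$ realizes $\beta(G)$, then $S$ is an independent set, so $S\in\Ind(G)$, and $S$ is a minimal vertex-wise dominating set for $G$; but here one must be slightly careful, since $S$ is only assumed minimal \emph{among independent} vertex-wise dominating sets. However, by the preceding proposition a minimal vertex-wise dominating set is the same thing as a minimal $S$-face, and I claim that any independent vertex-wise dominating set that is minimal among independent ones is automatically minimal among all vertex-wise dominating sets: if a proper subset $S'\subsetneq S$ were vertex-wise dominating, then $S'$ would still be independent (subsets of independent sets are independent), contradicting minimality of $S$ within the independent ones. Hence $S$ is a genuine minimal vertex-wise dominating set, so by the proposition it is a minimal $S$-face, giving $|S|\le\max\{|L|\colon L\text{ a minimal }S\text{-face of }\Ind(G)\}$. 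For $\ge$: if $L$ is a minimal $S$-face of $\Ind(G)$ of maximum size, then $L\in\Ind(G)$, i.e.\ $L$ is independent, and by the proposition $L$ is a minimal vertex-wise dominating set, a fortiori a minimal \emph{independent} vertex-wise dominating set (no independent proper subset can be vertex-wise dominating since no proper subset at all can be); therefore $|L|\le\beta(G)$.

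The only genuine subtlety, and the step I would flag as the place to be careful rather than the ``hard part'', is this matching of the two notions of minimality: ``minimal among independent vertex-wise dominating sets'' versus ``minimal among all vertex-wise dominating sets.'' Because the property of being an independent set is closed under taking subsets, these two notions coincide for sets that happen to be independent, which is precisely the situation here; I would state this explicitly as a one-line observation so the reader sees why $\beta(G)$, defined via the restricted minimality, still lands exactly on the $S$-face invariant. The hypothesis $E\neq\emptyset$ merely guarantees that vertex-wise dominating sets (and hence minimal $S$-faces other than the trivial case) behave as expected and that both sides are defined; with all pieces in place the corollary is immediate from the proposition.

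\begin{proof}
By definition, $\beta(G)$ is the maximum cardinality of a minimal independent vertex-wise dominating set of $G$. Let $S$ be such a set. Then $S$ is an independent set, so $S\in\Ind(G)$. Moreover $S$ is a minimal vertex-wise dominating set for $G$: if some $S'\subsetneq S$ were vertex-wise dominating, then $S'$ would still be independent, contradicting the minimality of $S$ among independent vertex-wise dominating sets. By the preceding proposition, $S$ is then a minimal $S$-face of $\Ind(G)$, whence $\beta(G)\le\max\{|L|\colon L\text{ is a minimal }S\text{-face of }\Ind(G)\}$.

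Conversely, let $L$ be a minimal $S$-face of $\Ind(G)$. In particular $L\in\Ind(G)$, so $L$ is an independent set, and by the preceding proposition $L$ is a minimal vertex-wise dominating set for $G$. Since no proper subset of $L$ is vertex-wise dominating, in particular no \emph{independent} proper subset of $L$ is, so $L$ is a minimal independent vertex-wise dominating set of $G$. Hence $|L|\le\beta(G)$, and therefore $\max\{|L|\colon L\text{ is a minimal }S\text{-face of }\Ind(G)\}\le\beta(G)$. Combining the two inequalities yields the claimed equality.
\end{proof}
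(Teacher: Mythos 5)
Your proof is correct and follows exactly the route the paper intends: the corollary is stated without proof as an immediate consequence of the preceding proposition, and your argument is precisely that deduction, with the added (and welcome) explicit observation that ``minimal among independent vertex-wise dominating sets'' coincides with ``minimal among all vertex-wise dominating sets'' for independent sets because independence is closed under taking subsets.
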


We are now ready to provide an upper bound on the regularity of any graph in terms of a domination parameter of the underlying graph that may be of independent interest.
\begin{theorem}\label{thm:reg-betaev}
$\reg(G)\leq \beta(G)$ for any graph $G$ with $E\neq \emptyset$.
\end{theorem}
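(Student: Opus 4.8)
The plan is to combine Theorem~\ref{thm:ha-wood} of H\'a–Woodroofe with the combinatorial identification just established. By Theorem~\ref{thm:ha-wood}, $\reg(\Ind(G))$ is at most the maximum size of a minimal face $A$ of $\Ind(G)$ whose link $\lk_{\Ind(G)}(A)$ is a simplex, i.e. at most the maximum size of a minimal $S$-face of $\Ind(G)$. By Corollary~\ref{cor:betaev-s-face}, this latter quantity equals $\beta(G)$. Since $\reg(G)=\reg(\Ind(G))$ by definition, chaining these two facts yields $\reg(G)\leq \beta(G)$, as required.

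The one point that needs a small check is the compatibility of the two "minimal" notions. Theorem~\ref{thm:ha-wood} as stated bounds $\reg(\D)$ by the maximum size of a \emph{minimal} face $A$ with $\lk_{\D}(A)$ a simplex, so I would first observe that the maximum of $|A|$ over minimal $S$-faces coincides with the maximum of $|A|$ over \emph{all} $S$-faces: indeed every $S$-face contains a minimal $S$-face, but this only gives one inequality; for the other, note that a face of maximal size among all $S$-faces need not itself be minimal. The correct reading is that Theorem~\ref{thm:ha-wood} already restricts to minimal $S$-faces, and the preceding Proposition identifies the minimal $S$-faces of $\Ind(G)$ precisely with the minimal vertex-wise dominating sets of $G$; taking the maximum size of such a set is by definition $\beta(G)$. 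So the argument is simply: $\reg(G) = \reg(\Ind(G)) \leq \max\{|A| : A \text{ a minimal } S\text{-face of } \Ind(G)\} = \beta(G)$, where the inequality is Theorem~\ref{thm:ha-wood} and the last equality is Corollary~\ref{cor:betaev-s-face}.

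There is really no serious obstacle here; the substantive work has already been done in the Proposition and Corollary~\ref{cor:betaev-s-face} that translate the topological condition "$\lk_{\Ind(G)}(A)$ is a simplex" into the graph-theoretic condition "$A$ is a minimal vertex-wise dominating set." The only thing to be careful about is that the hypothesis $E\neq\emptyset$ is genuinely needed: when $G$ has an edge, $\Ind(G)$ is not a simplex and the empty set is not an $S$-face (its link is the whole complex, which is not a simplex), so $\beta(G)$ is well-defined and positive, matching the regularity of a non-simplex complex. I would state this hypothesis explicitly and then write the two-line chain of (in)equalities above as the proof.
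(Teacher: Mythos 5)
Your proposal is correct and is essentially identical to the paper's proof: it chains the H\'a--Woodroofe bound (Theorem~\ref{thm:ha-wood}) with the identification of minimal $S$-faces of $\Ind(G)$ as minimal vertex-wise dominating sets (Corollary~\ref{cor:betaev-s-face}), which is exactly what the paper does. Your extra remarks on the compatibility of the two notions of minimality and on the hypothesis $E\neq\emptyset$ are fine but not needed beyond the two-line chain of (in)equalities.
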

\begin{proof}
This immediately follows from Theorem~\ref{thm:ha-wood} and Corollary~\ref{cor:betaev-s-face}.
\end{proof}

We remark that, in general, the invariants $\beta(G)$ and $\cd(G)$ are incomparable, where $\cd(G)$ is the \emph{cochordal cover number} of $G$, i.e., the least number of cochordal subgraphs $H_1,\ldots,H_k$ of $G$ satisfying $E(G)=\cup E(H_i)$. For example, $\cd(P_4)=1$ and $\beta(P_4)=2$, while $\beta(C_7)=2$ and $\cd(C_7)=3$. Furthermore, it is not difficult to construct examples of graphs showing that the gap between $\reg(G)$ and $\beta(G)$ could be arbitrarily large.

\begin{corollary}\label{cor:proddim-betaev}
$\pd(\HE)\leq \beta(\LE(\HE))$ for any hypergraph $\HE$ with $\E\neq \emptyset$.
\end{corollary}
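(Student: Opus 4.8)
The plan is to chain together the two results that have just been established: the regularity bound $\pd(\HE)\leq \reg(\LE(\HE))$ from Corollary~\ref{cor:pdlevi}, and the domination bound $\reg(G)\leq \beta(G)$ from Theorem~\ref{thm:reg-betaev}. Since the hypothesis $\E\neq \emptyset$ guarantees that $\LE(\HE)$ has at least one edge, Theorem~\ref{thm:reg-betaev} applies to the graph $G=\LE(\HE)$, giving $\reg(\LE(\HE))\leq \beta(\LE(\HE))$. Combining this with Corollary~\ref{cor:pdlevi} yields
\begin{equation*}
\pd(\HE)\;\leq\;\reg(\LE(\HE))\;\leq\;\beta(\LE(\HE)),
\end{equation*}
which is exactly the asserted inequality. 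So at the level of the abstract logical skeleton, the corollary is an immediate two-line consequence of the material that precedes it.

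The only point that genuinely requires care is checking that the hypotheses line up. Theorem~\ref{thm:reg-betaev} is stated for a graph with nonempty edge set, and one should confirm that $E(\LE(\HE))\neq\emptyset$ precisely when $\E\neq\emptyset$: by the definition of the Levi graph, $(x,F)\in E(\LE(\HE))$ iff $x\in F$, and since the hypergraphs under consideration contain no singletons (indeed no loops, so every edge has size $\geq 2$), any $F\in\E$ is nonempty and contributes at least one incidence. Hence $\E\neq\emptyset$ forces $E(\LE(\HE))\neq\emptyset$, and Theorem~\ref{thm:reg-betaev} is legitimately invoked. I would also note in passing that $\LE(\HE)$ need not itself be the Levi graph of a hypergraph for the argument to work — we only use that it is \emph{some} graph with an edge, so no Sperner-type restriction intervenes here.

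There is really no main obstacle in this statement; it is a packaging corollary. If anything, the one thing worth a sentence of commentary is \emph{why} one bothers: the chain shows that the vertex-wise domination parameter $\beta$ of the incidence graph is a genuine combinatorial upper bound on the projective dimension of the original (hyper)graph, parallel to how $\beta(G)$ bounds $\reg(G)$. For the written proof I would therefore simply state: ``By Corollary~\ref{cor:pdlevi} we have $\pd(\HE)\leq\reg(\LE(\HE))$, and since $\E\neq\emptyset$ implies $E(\LE(\HE))\neq\emptyset$, Theorem~\ref{thm:reg-betaev} gives $\reg(\LE(\HE))\leq\beta(\LE(\HE))$; chaining the two inequalities proves the claim.''
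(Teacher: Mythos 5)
Your proposal is correct and follows exactly the paper's own argument: chain Corollary~\ref{cor:pdlevi} with Theorem~\ref{thm:reg-betaev} applied to $\LE(\HE)$. The extra check that $\E\neq\emptyset$ forces $E(\LE(\HE))\neq\emptyset$ is a fine (if implicit in the paper) sanity verification and does not change the route.
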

\begin{proof}
Since $\pd(\HE)\leq \reg(\LE(\HE))$ for any hypergraph $\HE$ by Corollary~\ref{cor:pdlevi}, the claim follows from Theorem~\ref{thm:reg-betaev}.  
\end{proof}
\subsection{An application: Projective dimension of the dominance complex of graphs}

In this subsection, we compute the projective dimension of the dominance complex of any graph $G$, or equally that of the closed neighbourhood hypergraph of $G$ as an application of Corollary~\ref{cor:proddim-betaev}.

We first recall that for a given graph $G=(V,E)$, its \emph{dominance complex} $\dom(G)$ is the simplicial complex on $V$ whose faces are those subsets $A\subseteq V$ such that the set $V\setminus A$ is a dominating set for $G$~\cite{MT}. Observe that there exists a one-to-one correspondence between facets of $\dom(G)$ and minimal dominating sets of $G$. In order to compute the projective dimension of $\dom(G)$, we first view $\dom(G)$ as an independence complex of a hypergraph associated to $G$.

The \emph{closed neighbourhood hypergraph} $\NE[G]$ of a graph $G$ is defined to be the hypergraph on $V$ whose edges are the minimal elements of the poset
$\{N_G[x]\colon x\in V\}$ ordered with respect to the inclusion.

\begin{lemma}\label{lem:dom-inde}
$\dom(G)\cong \Ind(\NE[G])$ for any graph $G$.
\end{lemma}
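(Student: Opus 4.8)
The plan is to unwind both sides of the claimed isomorphism $\dom(G)\cong \Ind(\NE[G])$ directly from the definitions, since each is a simplicial complex on the same vertex set $V$, and show they have the same faces. First I would recall that a face of $\dom(G)$ is, by definition, a subset $A\subseteq V$ such that $V\setminus A$ is a dominating set of $G$, i.e.\ $N_G[V\setminus A]=V$. On the other hand, a face of $\Ind(\NE[G])$ is a subset $A\subseteq V$ containing no edge of $\NE[G]$, where the edges of $\NE[G]$ are the minimal members of $\{N_G[x]\colon x\in V\}$ under inclusion.

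The key step is the following equivalence: for $A\subseteq V$, the set $V\setminus A$ is a dominating set of $G$ if and only if $A$ contains no set of the form $N_G[x]$ for any $x\in V$. Indeed, $V\setminus A$ fails to dominate $G$ precisely when some vertex $x$ has $N_G[x]\cap(V\setminus A)=\emptyset$, which says exactly that $N_G[x]\subseteq A$. Thus $A\in\dom(G)$ iff $N_G[x]\not\subseteq A$ for every $x\in V$. Now I would observe that $N_G[x]\subseteq A$ holds for some $x$ if and only if $E\subseteq A$ for some edge $E$ of $\NE[G]$: one direction is immediate because every edge of $\NE[G]$ is of the form $N_G[x]$; for the other, if $N_G[x]\subseteq A$ then the minimal element $E$ of the poset $\{N_G[y]\colon y\in V\}$ lying below $N_G[x]$ also satisfies $E\subseteq N_G[x]\subseteq A$, and $E$ is by construction an edge of $\NE[G]$. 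Hence $A\in\dom(G)$ iff $A$ contains no edge of $\NE[G]$ iff $A\in\Ind(\NE[G])$, giving the desired equality of face sets, and since the ambient vertex set $V$ is literally the same, the identity map on $V$ furnishes the isomorphism.

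I do not anticipate a genuine obstacle here; the only point requiring a little care is the passage between "$A$ contains $N_G[x]$ for some vertex $x$" and "$A$ contains an edge of $\NE[G]$," which rests on the fact that $\NE[G]$ is defined via the \emph{minimal} elements of $\{N_G[x]\colon x\in V\}$, so that passing to a smaller set within that poset stays inside $A$ and lands on an actual edge. One should also note in passing that $\NE[G]$ is indeed a (simple) hypergraph in the sense of the paper: its edges are pairwise incomparable by construction, and no edge is a singleton because $N_G[x]$ always contains $x$ together with its neighbours (if $G$ has an isolated vertex $x$, then $N_G[x]=\{x\}$ is a singleton, so strictly one should either assume $G$ has no isolated vertices or allow such singletons; in the latter case the corresponding $x$ lies in no face of $\dom(G)$, matching the fact that it lies in no independent set of $\NE[G]$). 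Modulo this bookkeeping, the proof is a one-line definitional check as sketched above.
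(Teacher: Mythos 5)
Your proposal is correct and follows essentially the same route as the paper: both reduce the claim to the observation that $V\setminus A$ fails to dominate exactly when some closed neighbourhood $N_G[x]$ lies inside $A$, and both use the fact that the edges of $\NE[G]$ are the \emph{minimal} closed neighbourhoods to pass from ``contains some $N_G[x]$'' to ``contains an edge.'' Your parenthetical remark about isolated vertices (where $N_G[x]$ is a singleton) is a legitimate boundary case that the paper silently ignores, and your resolution of it is fine.
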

\begin{proof}
Since any face of $\dom(G)$ is clearly an independent set of $\NE[G]$,
to prove the claim, it suffices to verify that when $X\subseteq V$ is an independent set of $\NE[G]$, the set $V\setminus X$ is dominating for $G$.
So, suppose that $x\in X$. It then follows from the definition that there exists $x'\in N_G[x]$ such that $N_G[x']\in \E(\NE[G])$. However, since $X$ is an independent set, we must have $N_G[x']\nsubseteq X$ so that there exists $v\in N_G[x']$ such that $v\notin X$. Now, since $N_G[x']\subseteq N_G[x]$, we conclude that $vx\in E$; hence, the vertex $x$ is dominated by
$v\in V\setminus X$. 
\end{proof}

\begin{proposition}\label{prop:beta-gamma}
$\Upsilon(\LE(\NE[G]))=\Gamma(G)$ for any graph $G$.
\end{proposition}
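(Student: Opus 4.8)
The plan is to prove the two inequalities $\Gamma(G)\le \Upsilon(\LE(\NE[G]))$ and $\Upsilon(\LE(\NE[G]))\le \Gamma(G)$ separately, the bridge in both cases being the observation that the dominating sets of $G$ are exactly the transversals (vertex covers) of the hypergraph $\NE[G]$. Indeed, $D\subseteq V(G)$ dominates $G$ iff $D\cap N_G[x]\neq\emptyset$ for all $x$, iff $D$ meets every edge of $\NE[G]$ (since the edges of $\NE[G]$ are the minimal elements of $\{N_G[x]\}$), so minimal dominating sets of $G$ correspond precisely to minimal vertex covers of $\NE[G]$.

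For $\Gamma(G)\le\Upsilon(\LE(\NE[G]))$ the key point is that, for any simple hypergraph $\HE$, a subset $T$ of the vertex‑class of $B:=\LE(\HE)$ vertex‑wise dominates \emph{every} edge of $B$ if and only if $T$ meets every edge of $\HE$: for an edge $(x,F)$ of $B$ one has $x\in F$, hence $N_B[(x,F)]\cap V=F$, so $T$ vertex‑wise dominates $(x,F)$ iff $T\cap F\neq\emptyset$. Since every proper subset of such a $T$ again sits inside the vertex‑class, a minimal vertex cover of $\HE$ is automatically a minimal vertex‑wise dominating set of $B$. Applying this with $\HE=\NE[G]$ and taking a minimal dominating set of $G$ of maximum size yields the inequality.

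For the reverse inequality I would take a \emph{maximum} minimal vertex‑wise dominating set $S$ of $B=\LE(\NE[G])$ and split it as $S=S_V\sqcup S_\E$ with $S_V\subseteq V$ and $S_\E$ a set of edges of $\NE[G]$. Minimality of $S$ supplies, for each $F\in S_\E$, a private edge $(x,G_0)$ of $B$ with $F$ the unique element of $S$ in $N_B[(x,G_0)]$; unwinding this ($N_B[(x,G_0)]\cap\E=\{G_0\}\cup\{F'\in\E:x\in F'\}$ and $N_B[(x,G_0)]\cap V=G_0\cup\{x\}$) produces a vertex $x_F\in F$ with $x_F\notin S_V$ such that $F$ is the unique member of $S_\E$ containing $x_F$. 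The $x_F$ are therefore pairwise distinct and disjoint from $S_V$; alternatively one may use the centre $c_F$ (with $N_G[c_F]=F$), which is again uniquely determined by $F$. Either way one obtains a set $D\subseteq V(G)$ with $|D|=|S|$, and the task reduces to arranging that $D$ is a \emph{minimal dominating set} of $G$.

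The main obstacle is exactly this last step. Unwinding the vertex‑wise domination condition on $B$ gives, for each edge $F'$ of $\NE[G]$, a trichotomy: $F'\in S_\E$, or $F'\cap S_V\neq\emptyset$, or every vertex of $F'$ lies in some edge of $S_\E$. In the first two cases $D$ plainly meets $F'$, but in the third case one must choose the replacement vertices carefully, and this is precisely where the special structure of $\NE[G]$ must be exploited — each edge is a closed neighbourhood containing its own centre, and two edges with a common centre coincide — to make a globally consistent choice for which $D$ is a transversal of $\NE[G]$, hence dominating. Once $D$ is seen to be a dominating set of size $|S|$, one passes (without decreasing the size, again using that $S$ was a \emph{minimal} vertex‑wise dominating set) to a minimal dominating set of $G$ of size at least $|S|$, giving $\Gamma(G)\ge|S|=\Upsilon(\LE(\NE[G]))$. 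I expect the bookkeeping in this third case of the trichotomy, together with the verification of irredundancy, to be the delicate heart of the argument.
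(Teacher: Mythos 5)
Your first inequality, $\Gamma(G)\le\Upsilon(\LE(\NE[G]))$, is correct and complete: the identification of (minimal) dominating sets of $G$ with (minimal) transversals of $\NE[G]$, and of transversals with vertex-wise dominating sets of the Levi graph lying in the vertex class, is exactly what is needed, and you even handle the minimality bookkeeping that the paper leaves implicit. The problem is the reverse inequality, which is the real content of the proposition and which your proposal does not prove: you construct a candidate $D=S_V\cup\{x_F\colon F\in S_{\E}\}$ of size $|S|$ and then explicitly defer both the verification that $D$ meets every edge of $\NE[G]$ (your ``case 3'' of the trichotomy) and the verification of irredundancy to an unexecuted ``delicate heart of the argument.'' Neither step is cosmetic. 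A dominating set of size $|S|$ by itself gives no lower bound on $\Gamma(G)$: pruning a non-minimal dominating set strictly decreases its size, so ``passing without decreasing the size to a minimal dominating set'' is not an available move and must be replaced by a proof that the constructed set is itself minimal (every element has a private neighbour). As it stands, $\Upsilon(\LE(\NE[G]))\le\Gamma(G)$ is asserted, not established.

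Moreover, the specific replacement you choose is likely the wrong one for closing case 3. The paper replaces each $F\in S_{\E}$ by a \emph{centre} $w$ with $N_G[w]=F$ (not by a private vertex $x_F$), and then argues pointwise: for a vertex $v$ outside the constructed set, choose $v'$ with $N_G[v']\in\E(\NE[G])$ and $N_G[v']\subseteq N_G[v]$, and ask which element of $S$ vertex-wise dominates the Levi-graph edge $(v,N_G[v'])$. If it is a vertex $u\in S_V$, then $u\in N_G[v']\subseteq N_G[v]$ and $u\neq v$, so $u$ dominates $v$; if it is a hyperedge $N_G[w]\in S_{\E}$, then either $N_G[w]=N_G[v']$ or $v\in N_G[w]$, and in both cases $v\in N_G[w]$, so the centre $w$ dominates $v$. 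With your $x_F$ the analogous step breaks down: from $v\in F=N_G[w]$ and $x_F\in F$ one cannot conclude $x_F v\in E(G)$, which is precisely why your case 3 resists a ``globally consistent choice.'' (A minor further slip: the centre is \emph{not} uniquely determined by $F$, since true twins have equal closed neighbourhoods, though one may of course fix one; and with centres one must also check the resulting set is not smaller than $|S|$.) So the hard inequality requires the centre-based argument, or a genuinely new idea, neither of which your proposal supplies.
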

\begin{proof}
Suppose that $A$ is a vertex-wise dominating set of $\LE(\NE[G])$ for which we write $A=A_v\cup A_e$, where $A_v\subseteq V$ and $A_e\subseteq \E(\NE[G])$. Observe that if $F\in A_e$, then there exists $x\in V$ such that $F=N_G[x]$. Now, we define $B:=\{x\in V\colon N_G[x]\in A_e\}$, and claim that the set $C:=A_v\cup B$ is a minimal dominating set for $G$. Let $v\in V\setminus C$ be given. It then follows from the definition that there exists $v'\in N_G[v]$ such that
$N_G[v']\in \E(\NE[G])$. If $v'\in C$, then $v$ is dominated by the vertex $v'$ so that we may further assume that $v'\notin C$. Now, consider the edge $(v, N_G[v'])$ of $\LE(\NE[G])$. Since $A$ is vertex-wise dominating, the edge $(v, N_G[v'])$ must be vertex-wise dominated by a vertex in $A$. If $u\in A_v$ is a vertex that vertex-wise dominates the edge $(v, N_G[v'])$, then $u\in N_G[v']\subseteq N_G[v]$ so that $uv\in E$, that is, $u$ dominates $v$ in $G$. On the other hand, if there exists a vertex $w\in V$ such that $N_G[w]\in A_e$ vertex-wise dominates the edge $(v, N_G[v'])$, then we have $v\in N_G[w]$. However, it means that the vertex $w\in B$ dominates $v$ in $G$. This proves that $C$ is a dominating set for $G$.  

For the other direction, suppose that $S\subseteq V$ is a dominating set for $G$. We claim that it is also a vertex-wise dominating set for $\LE(\NE[G])$. Consider an edge $(x, N_G[y])$ of $\LE(\NE[G])$. Since $S$ is a dominating set for $G$, we must have $N_G[y]\cap S\neq \emptyset$.
This in particular forces that there exists $s\in S$ such that $sy\in E$. However, it then follows that the vertex $s$ vertex-wise dominates the edge $(x, N_G[y])$.
\end{proof}
\begin{theorem}
$\pd(\dom(G))=\Gamma(G)$ for any graph $G$.
\end{theorem}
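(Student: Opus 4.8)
The plan is to assemble the theorem from the pieces already developed in the excerpt. The key observation is that $\dom(G)\cong \Ind(\NE[G])$ by Lemma~\ref{lem:dom-inde}, so that $\pd(\dom(G))=\pd(\NE[G])$ where we view $\NE[G]$ as the hypergraph whose Levi graph is $\LE(\NE[G])$. First I would record the chain of inequalities and equalities that pin down $\pd(\NE[G])$ from above and below, and then show both bounds coincide with $\Gamma(G)$.

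For the upper bound, I would apply Corollary~\ref{cor:proddim-betaev} to the hypergraph $\HE=\NE[G]$ (which has nonempty edge set as long as $G$ has at least one vertex, since $N_G[x]$ is always nonempty), giving
\begin{equation*}
\pd(\dom(G))=\pd(\NE[G])\leq \beta(\LE(\NE[G]))\leq \Upsilon(\LE(\NE[G]))=\Gamma(G),
\end{equation*}
where the middle inequality is the elementary fact $\beta\leq\Upsilon$ noted right after the definition of these parameters, and the final equality is Proposition~\ref{prop:beta-gamma}. (If one prefers to avoid passing through $\beta$, one can instead apply Theorem~\ref{thm:ha-wood} directly: a minimal $S$-face of $\Ind(\NE[G])$ that also needs to be independent need not exist, so routing through $\beta$ and then through $\beta\leq\Upsilon$ is the clean way.)

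For the lower bound, I would exhibit a minimal dominating set of $G$ of size $\Gamma(G)$ and translate it into a nonvanishing Betti number of $\dom(G)$ via the combinatorial invariants already available. The cleanest route uses the remark following Corollary~\ref{cor:pdlevi}: for any hypergraph $\HE$ one has $\hf(\Ind(\HE)^{\vee})=|V|-\ie(\HE)\leq\pd(\HE)$, but that gives $|V|-\ie(\NE[G])$ which is not obviously $\Gamma(G)$; instead I would argue directly. A maximum minimal dominating set $D$ of $G$, with $|D|=\Gamma(G)$, corresponds to a facet of $\dom(G)$; more to the point, I would show that the complement $V\setminus D$ is a minimal non-face of the Alexander dual $\Ind(\NE[G])^{\vee}$, so $\dom(G)^{\vee}[V\setminus D]$ is a sphere $S^{\Gamma(G)-2}$ of the appropriate dimension, which forces $\reg(\Ind(\NE[G])^{\vee})\geq\Gamma(G)-1$, and hence via Terai's duality $\pd(\dom(G))=\reg(\Ind(\NE[G])^{\vee})+1\geq\Gamma(G)$. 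The minimality of $V\setminus D$ as a non-face of the dual amounts precisely to the statement that $D$ is a minimal dominating set (each vertex of $D$ has a private neighbour with respect to $D$), which is exactly the condition built into $\Gamma(G)$.

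The main obstacle I expect is the lower bound bookkeeping: one must be careful that the hypergraph $\NE[G]$ can have $|V|$ strictly larger than the ground set supporting $\dom(G)$'s interesting homology, and that isolated vertices of $G$ (which produce the singleton $N_G[x]=\{x\}$, forbidden as a hypergraph edge) do not cause trouble — but since $N_G[x]=\{x\}$ is a loop, such $x$ simply never lies in any independent set of $\NE[G]$, hence never in any face of $\dom(G)$, consistent with $x$ being forced out of every minimal dominating set's complement. Once the combinatorial dictionary between minimal dominating sets of $G$ and minimal non-faces of $\Ind(\NE[G])^{\vee}$ is set up correctly, the sphere computation is routine and the two bounds meet at $\Gamma(G)$.
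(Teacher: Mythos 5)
Your proposal is correct and follows essentially the same route as the paper: the upper bound is exactly the chain $\pd(\dom(G))=\pd(\NE[G])\leq \reg(\LE(\NE[G]))\leq \beta(\LE(\NE[G]))\leq \Upsilon(\LE(\NE[G]))=\Gamma(G)$ via Proposition~\ref{prop:beta-gamma}, and your lower bound is just the unpacking of the paper's Helly-number remark, since $\ie(\NE[G])=|G|-\Gamma(G)$ makes $\hf(\Ind(\NE[G])^{\vee})=\Gamma(G)\leq \pd(\dom(G))$ immediate. One small labelling slip: the minimal non-face of $\Ind(\NE[G])^{\vee}$ attached to a minimal dominating set $D$ is $D$ itself (minimal non-faces of the dual are complements of facets of $\dom(G)$, and those facets are the sets $V\setminus D$), not $V\setminus D$; your dimension count $S^{\Gamma(G)-2}$ and the conclusion $\reg(\Ind(\NE[G])^{\vee})\geq \Gamma(G)-1$ already presuppose the correct object, so the argument stands once the names are swapped.
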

\begin{proof}
We clearly have $\ie(\NE[G])=|G|-\Gamma(G)$ so that
$\hf(\dom(G))=|G|-\ie(\NE[G])=\Gamma(G)\leq \pd(\dom(G))$. On the other hand, we have 
\begin{align*}
\pd(\dom(G))=\pd(\NE[G])&\leq \reg(\LE(\NE[G]))\\
&\leq \beta(\LE(\NE[G]))\leq \Upsilon(\LE(\NE[G]))=\Gamma(G)
\end{align*} 
by Proposition~\ref{prop:beta-gamma}.  
\end{proof}

\section{Projective dimension of graphs and regularity of subdivisions}

In this section, we provide a detail analysis on the consequences of Corollary~\ref{cor:pdlevi} when $\HE=H$ is just a  graph. This will include the comparison of upper and lower bounds both on projective dimension of a graph $G$ and those of the regularity of its subdivision graph $S(G)$. However, we first restate  Corollary~\ref{cor:pdlevi} in the graph's case for completeness:

\begin{corollary}\label{cor:prodtosubreg}
$\pd(G)\leq \reg(S(G))$ for any graph $G$.
\end{corollary}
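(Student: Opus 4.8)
The plan is to derive this from Corollary~\ref{cor:pdlevi} by simply specializing $\HE$ to a simple graph $H$. The only thing that requires a moment's thought is to confirm that for a simple graph $H$, the Levi graph $\LE(H)$ really coincides with the subdivision graph $S(H)$; this is recorded in Section~\ref{subsect:hyper}, where $S(H)$ is defined to be $\LE(H)$ when the hypergraph $\HE = H$ is a graph (each edge of $H$, viewed as a two-element vertex set, becomes a degree-two vertex of $\LE(H)$ adjacent precisely to its two endpoints, which is exactly the effect of subdividing that edge once).

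With that identification in hand, the proof is one line: apply Corollary~\ref{cor:pdlevi} to the hypergraph $\HE = G$, which gives $\pd(G) = \pd(\HE) \leq \reg(\LE(\HE)) = \reg(\LE(G)) = \reg(S(G))$.

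There is essentially no obstacle here; the substantive work was already done in Theorem~\ref{cor:reg-bip-simp} and Corollary~\ref{cor:pdlevi}. The only point worth flagging is the edge case: if $G$ has isolated vertices they contribute nothing to either side (an isolated vertex of $G$ is an isolated vertex of $S(G)$ and does not affect $\pd$ or $\reg$), and if $G$ has no edges then $\pd(G) = 0 = \reg(S(G))$, so the inequality holds trivially; thus we may harmlessly assume $E(G) \neq \emptyset$ if convenient, but it is not even necessary.

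\begin{proof}
When $\HE = G$ is a simple graph, its Levi graph $\LE(G)$ is by definition the subdivision graph $S(G)$ (see Section~\ref{subsect:hyper}): each edge $e = \{u,v\} \in E(G)$ becomes a vertex of $\LE(G)$ adjacent to exactly $u$ and $v$, which is precisely the result of subdividing $e$ once. Hence applying Corollary~\ref{cor:pdlevi} to the hypergraph $\HE = G$ yields
\[
\pd(G) \leq \reg(\LE(G)) = \reg(S(G)),
\]
as claimed.
\end{proof}
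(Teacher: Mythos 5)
Your proof is correct and matches the paper exactly: the paper itself obtains Corollary~\ref{cor:prodtosubreg} by restating Corollary~\ref{cor:pdlevi} in the case where the hypergraph is a graph, using the identification $\LE(G)=S(G)$ noted in Section~\ref{subsect:hyper}. Your handling of the edgeless and isolated-vertex cases is a harmless extra remark but not needed.
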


\begin{proposition}
For any graph $G$, there exists a subgraph $H$ (not necessarily induced) of $G$ such that $\pd(H)=\reg(S(G))$.
\end{proposition}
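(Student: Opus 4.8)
The plan is to exploit the combinatorial characterization of subdivision graphs together with the fact, recorded earlier, that $B(\Ind(\HE)^{\vee})\cong \LE(\HE)$, so that $\reg(S(G))=\reg(B(\Ind(G)^{\vee}))$. By the \textbf{definition} of the regularity of a simplicial complex as the maximal $n$ for which $\widetilde{H}_{n-1}(\D[W])\neq 0$ for some vertex subset $W$, together with Corollary~\ref{cor:max-local}, there is a subset $S$ of the vertex set of $S(G)$ realizing $\reg(S(G))$, in the sense that $\widetilde{H}_{\reg(S(G))}(\IE(B_S(\D)))\neq 0$, where $\D=\Ind(G)^{\vee}$; moreover by Lemma~\ref{lem:max-local} we may replace $B_S(\D)$ by $B(\D[S])$. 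The point will be to recognize $B(\D[S])$ again as a subdivision graph—this time of a \emph{subgraph} of $G$—and then to run the argument of Corollary~\ref{cor:prodtosubreg}/Corollary~\ref{cor:pdlevi} in reverse.

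First I would set up notation: write $V=V(G)$ and $E=E(G)$, and recall that $S(G)$ has vertex set $V\cup E$ with each original edge $e=uv$ replaced by the path $u,e,v$; the facets of $\D=\Ind(G)^{\vee}$ are exactly the sets $V\setminus\{u,v\}$ for $e=uv\in E$, which is precisely why $B(\D)\cong S(G)$ (the ``$e$'' side has degree $2$). Second, I would analyze what $B(\D[S])$ looks like for $S\subseteq V\cup E$. Restricting $\D$ to $S\cap V$ and then taking facets: a facet of $\D[S\cap V]$ is a maximal subset of $S\cap V$ of the form $(S\cap V)\setminus N$ for $N$ of the shape $\{u,v\}\cap S$; these are indexed, up to repetition, by those edges $e=uv\in E$ with at least one endpoint in $S\cap V$ (edges with no endpoint in $S$ give the non-face that is all of $S\cap V$). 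Taking the bipartite complement to recover $B$ from facets, each such ``$e$-node'' has degree at most $2$ in $B(\D[S])$, and after deleting $C_4$'s and degree-issues via the characterization of subdivision graphs quoted in Section~\ref{subsect:hyper}, I expect $B(\D[S])$ to be (homotopy-equivalent to, hence regularity-comparable with—using Theorem~\ref{cor:reg-bip-simp} and Proposition~\ref{prop:bip-simp}) the subdivision graph $S(H)$ of the subgraph $H$ of $G$ with $V(H)=S\cap V$ and $E(H)$ the edges of $G$ having both endpoints in $S\cap V$ (an induced subgraph, in fact), with possibly some pendant or isolated structure from edges $S\cap E$ that I would argue contributes nothing to the relevant Betti number. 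Third, with $\reg(S(G))=\reg(B(\D[S]))=\reg(S(H))$ for this $H$, I would invoke that $\reg(S(H))$ in turn, via the nonvanishing homology in the ``right'' degree and Terai duality run backwards (as in the proof of Corollary~\ref{cor:pdlevi}), equals $\pd$ of the hypergraph whose Levi graph is $B(\D[S])$; since that hypergraph is $H$ itself (or $H$ with isolated vertices, which do not change $\pd$), we get $\pd(H)=\reg(S(G))$.

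The main obstacle will be the bookkeeping in the second step: making precise that after the homotopy reductions of Lemma~\ref{lem:max-local} the graph $B(\D[S])$ really is the subdivision graph of a subgraph of $G$ and not merely of some abstract graph. Two subtleties need care. One is the role of $S\cap E$: these vertices of $S(G)$ lie on the ``facet/$e$'' side, and I must check that including or discarding them does not raise the regularity beyond what the $S\cap V$ part already contributes—this should follow because such a vertex has degree $\le 2$ and its neighbors are among the original vertices, so it behaves like a subdivided edge or a pendant path, neither of which can increase the top nonvanishing homological degree we are tracking. The other is the possibility of ``parallel'' $e$-nodes (distinct edges of $G$ restricting to the same pair in $S$) and edges with a single endpoint in $S$; the first is handled exactly by the ``remove facets in $\FE_i$ down to one'' step already proved in Lemma~\ref{lem:max-local}, and the second produces pendant vertices on the $V$-side, again homotopically and regularity-wise harmless. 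Once these are dispatched, the identification $B(\D[S])\cong S(H)^{\circ}$ (up to isolated vertices) is forced by the $C_4$-free, degree-$2$ characterization of subdivision graphs, and the chain of equalities $\pd(H)=\reg(S(H))=\reg(S(G))$ closes the argument. I would remark that $H$ may be taken induced, strengthening the ``not necessarily induced'' in the statement, but would not belabor this if the restriction-to-$S\cap V$ analysis does not quite yield it cleanly.
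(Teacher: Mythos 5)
Your overall strategy --- locate a minimal induced subgraph of $S(G)\cong B(\D)$ with $\D=\Ind(G)^{\vee}$ witnessing the top nonvanishing homology, recognize it as the subdivision of a subgraph $H$ of $G$, and close the loop with Terai duality --- is the right one, and your final chain of (in)equalities is essentially what the paper uses. But the two steps you defer as ``bookkeeping'' are where the proof actually lives, and as you set them up they fail. First, you assert that the regularity of $B(\D)$ is realized by a subgraph of the form $B_S(\D)$, so that Lemma~\ref{lem:max-local} and Corollary~\ref{cor:max-local} apply. Those results only treat induced subgraphs that keep the \emph{entire} facet side $\FE_{\D}$ and restrict the vertex side; a homology witness for $\reg(B(\D))$ is a general induced subgraph $S(G)[R\cup F]$ with $R\subseteq V$ and $F\subseteq E$, where $F$ may well be a proper subset of $E$. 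If every witness had the form $B_S(\D)$, then $\reg(B(\D))$ would always equal $\reg(\D)+1$, contradicting the whisker examples $W(K_n)$ in Section~3 where the gap is arbitrarily large. Second, the subgraph you extract is the wrong one: $S(G)[R\cup F]$ contains edge-vertices only for the edges in $F$, so it is unrelated to the subdivision of the induced subgraph $G[R]$, which carries an edge-vertex for \emph{every} edge of $G$ inside $R$. The correct choice is $H:=(V(F),F)$, which is in general not induced --- this is precisely why the statement says ``not necessarily induced,'' and your closing remark that $H$ may be taken induced cannot be obtained along these lines.

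Moreover, the substantive verification runs in the opposite direction from the discarding you describe: one must \emph{add back} the endpoints in $V(F)\setminus R$ to pass from the witness $S(G)[R\cup F]$ to the full $S(H)$, and show that this does not change the homotopy type of the independence complex. This is where the minimality of the pair $(R,F)$ enters: it forces any $f\in F$ with an endpoint outside $R$ to have degree one in $S(G)[R\cup F]$, so that the reattached endpoint is dominated and can be deleted by Theorem~\ref{thm:hom-induction}. Once $S(H)\simeq S(G)[R\cup F]$ is established, $\widetilde{H}_{m-1}$ of $\Ind(S(H))$ is nonzero, which gives $\pd(H)\geq m$ via $\Sigma(\Ind(H)^{\vee})\simeq \Ind(S(H))$ and Terai duality, while $\pd(H)\leq\reg(S(H))\leq\reg(S(G))=m$ since $S(H)$ is an induced subgraph of $S(G)$. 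Without the minimality argument and the correct identification of $H$, the middle of your proof has a genuine gap.
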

\begin{proof}
Suppose that $\reg(S(G))=m$, and let $R\subseteq V$ and $F\subseteq E$ be subsets satisfying $\widetilde{H}_{m-1}(S(G)[R\cup F])\neq 0$ for which we may assume that both sets $R$ and $F$ are minimal with this property. Now, we define $H:=(V(F),F)$ and claim that it satisfies the required property. In order to prove that it suffices to verify the homotopy equivalence
$S(H)\simeq S(G)[R\cup F]$, since we already have $\Sigma(\Ind(H)^{\vee})\simeq S(H)$.
If $x\in V(F)\setminus R$ and $x\in N_{S(G)[R\cup F]}(f)$ for some $f\in F$, 
then $\deg_{S(G)[R\cup F]}(f)=1$ by the minimality of the sets $R$ and $F$; hence, if $y$ is the only neighbour of $f$ in $S(G)[R\cup F]$, then we must have $N_{S(H)}(y)\subseteq N_{S(H)}(x)$, which in turn implies that
$S(H)\simeq S(H)-x$. It then follows that we can remove any vertex in $V(F)\setminus R$ without altering the homotopy type of $S(H)$, that is,
$S(H)\simeq S(H)-(V(F)\setminus R)\cong S(G)[R\cup F]$.
\end{proof}

\begin{lemma}\label{lem:sg-reduct}
Let $G=(V,E)$ be a graph with $\reg(S(G))=m>0$. Then for any $m>k\geq 0$,
there exists an induced subgraph $H_k$ of $G$ such that
$\reg(S(H_k))=k$.
\end{lemma}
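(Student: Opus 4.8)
The plan is to prove Lemma~\ref{lem:sg-reduct} by induction on $|V(G)|$, using a single-vertex deletion step together with the fundamental fact (a consequence of Corollary~\ref{cor:prodtosubreg} and the short-exact-sequence/Mayer--Vietoris machinery underlying the earlier homotopy computations) that deleting one vertex from a graph changes the regularity of the subdivision by at most one. More precisely, the first step is to record the inequality $\reg(S(G-x)) \geq \reg(S(G)) - 1$ for a suitably chosen vertex $x \in V(G)$. The natural source of such an $x$ is a minimal witness: choose $R \subseteq V$ and $F \subseteq E$ minimal with $\widetilde{H}_{m-1}(S(G)[R\cup F]) \neq 0$ (exactly as in the preceding proposition), and pick any $x \in V \setminus (V(F) \cup R)$ if one exists; then $S(G-x)[R\cup F] = S(G)[R\cup F]$, so $\reg(S(G-x)) \geq m$ and we can recurse downward without even dropping the regularity. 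The only case where no such $x$ exists is $V = V(F) \cup R$, i.e.\ $G$ is already ``tight'' for its subdivision regularity.

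In that tight case the step I would take is to delete a vertex $x$ that does lie in the witness and control the drop. Here I would invoke the general principle that for any graph $G$ and vertex $x$, one has $\reg(S(G)) - 1 \leq \reg(S(G-x))$: this follows because $S(G-x)$ is obtained from $S(G)$ by deleting $x$ together with the (degree-two) subdivision vertices of the edges at $x$, and each such deletion drops regularity by at most one while the subdivision vertices of non-incident edges are unaffected --- one can make this precise via the standard exact sequence $\reg(\Delta) \leq \max\{\reg(\Delta - v),\ \reg(\lk_\Delta(v)) + 1\}$ applied to $\Ind(S(G))$, or simply by the Alexander-dual translation $\reg(S(G)) = \pd(G) - $ wait, more cleanly: $\pd(G-x) \geq \pd(G) - 1$ is a routine Betti-number fact, and combined with $\pd(G-x) \leq \reg(S(G-x))$ and the witness-based lower bound $\reg(S(G)) \leq \pd(G)' $ this needs care. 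I would therefore lean on the purely topological statement: deleting a single vertex from a simplicial complex drops the (Castelnuovo--Mumford) regularity by at most $1$, applied to the complex $\Ind(S(G))$ after identifying which vertices of $S(G)$ get removed when passing to $S(G-x)$.

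With these two facts in hand, the induction runs as follows. If $\reg(S(G)) = m$ and we want an induced subgraph with subdivision regularity exactly $k < m$: by the argument above there is a vertex $x$ with $m - 1 \leq \reg(S(G-x)) \leq m$ (the upper bound being obvious since $S(G-x)$ is an induced subgraph of $S(G)$, using monotonicity of regularity under induced subcomplexes). Repeatedly deleting such vertices produces a descending chain of induced subgraphs whose subdivision regularities decrease by steps of size $0$ or $1$, starting at $m$ and reaching $0$ when the graph becomes edgeless. Since the sequence of integers $\reg(S(H_0)), \reg(S(H_1)), \ldots$ changes by at most $1$ at each step and covers the range from $m$ down to $0$, it must hit every intermediate value $k$ with $0 \leq k < m$; take $H_k$ to be the first induced subgraph in the chain attaining value $k$. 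This is a discrete intermediate value argument.

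The main obstacle is the second step: justifying cleanly that $\reg(S(G-x)) \geq \reg(S(G)) - 1$ in the tight case, i.e.\ that a one-vertex deletion cannot drop the subdivision regularity by more than one. The subtlety is that removing $x$ from $G$ removes not one but several vertices from $S(G)$ (namely $x$ and all subdivision midpoints of edges incident to $x$), so a naive ``one deletion, drop by at most one'' argument does not immediately apply. The cleanest route I expect is to use the link/deletion exact sequence for regularity together with the fact that in $S(G)$ the midpoint vertices have very restricted links (each midpoint $m_e$ of $e = xy$ has $\lk_{\Ind(S(G))}(m_e)$ equal to the independence complex of $S(G)$ with $x,y$ and their incident midpoints deleted), and to show that these auxiliary deletions occur ``for free'' because the relevant midpoints are dominated vertices in the sense of Theorem~\ref{thm:hom-induction} once $x$ is gone --- so that homotopy-equivalently (hence regularity-wise, after the Alexander-dual bookkeeping already set up in Section~\ref{sect:bir}) only the single genuine deletion of $x$ costs anything. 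Alternatively, and perhaps more safely, one invokes $\pd(G) \leq \reg(S(G))$ from Corollary~\ref{cor:prodtosubreg} in reverse on the witness subgraph $H' = (V(F), F)$: since $G$ is tight, $\reg(S(G)) = \reg(S(H')) = \pd(H')$, and $\pd$ of a graph drops by at most one under vertex deletion by a standard mapping-cone/Betti-number argument, giving $\pd(H' - x) \geq \pd(H') - 1 = \reg(S(G)) - 1$, whence $\reg(S(G-x)) \geq \reg(S(H'-x)) \geq \pd(H'-x) \geq \reg(S(G)) - 1$.
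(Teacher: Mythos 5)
Your overall scheme (delete vertices one at a time, show each deletion drops $\reg(S(\cdot))$ by at most one, then run a discrete intermediate-value argument) matches the paper's, but the step that carries all the weight --- that for a suitable vertex $x$ one has $\reg(S(G-x))\geq \reg(S(G))-1$ --- is not established by either of your two justifications, and both rest on statements that are false in general. Take $G=K_{1,n}$ and let $x$ be its center: then $\reg(S(K_{1,n}))=n$ (the $n$ pendant edges of the subdivision form an induced matching), while $G-x$ is edgeless, so $\reg(S(G-x))=0$; likewise $\pd(K_{1,n})=n$ but $\pd(K_{1,n}-x)=0$. This kills your ``route B'' claim that projective dimension drops by at most one under vertex deletion, and it also shows the choice of $x$ cannot be arbitrary even among witness vertices: here the minimal homological witness is the induced matching on all midpoints and leaves, so $V(F)\cup R=V(G)$ (your ``tight'' case) and the center lies in $V(F)$, yet deleting it collapses everything. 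Route A fails for the same example: after deleting $x$ from $S(G)$ the midpoints of edges at $x$ become pendant vertices whose neighbourhoods are \emph{smaller} than those of their neighbours, so Theorem~\ref{thm:hom-induction} does not let you delete them (it removes the vertex with the larger neighbourhood), and indeed deleting them changes both the homotopy type and the regularity (already for $G=P_3$: $S(G)-x\cong 2K_2$ has regularity $2$, while $S(G-x)$ has regularity $0$). Moreover, even where the domination lemma applies, homotopy equivalence alone does not control regularity, as the paper itself stresses.

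So the genuine gap is the selection of the vertex: the whole content of Lemma~\ref{lem:sg-reduct} is that \emph{some} vertex of $G$ can be deleted at a cost of at most one, and neither a generic vertex nor a generic witness vertex works. The paper gets this from the prime factorization theorem of~\cite{BC1}: writing a prime factorization $\{R_1,\ldots,R_s\}$ of $S(G)$ and choosing $v\in V(R_1)\cap V$ an \emph{original} vertex of $G$ inside a prime factor, primeness gives $\reg(R_1)=\reg(R_1-N_{R_1}[v])+1$, and $(R_1-N_{R_1}[v])\cup R_2\cup\cdots\cup R_s$ sits as an induced subgraph of $S(G)-N_{S(G)}[v]\cong S(G-v)$, whence $\reg(S(G-v))\geq \reg(S(G))-1$; your proposal contains no substitute for this mechanism. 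Your preliminary reduction (deleting vertices outside a minimal witness without changing the regularity, and the identification $\reg(S(H'))=\pd(H')$ for the witness subgraph via the preceding proposition) is fine, but it only postpones, and does not solve, the tight case.
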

\begin{proof}
By the prime factorization theorem of~\cite{BC1}, the graph $S(G)$ has a prime factorization, say $\{R_1,\ldots,R_s\}$. Now, if $v\in V(R_1)\cap V$,
we have $\reg(R_1)=\reg(R_1-N_{R_1}[v])+1$, since $R_1$ is a prime graph.
However, since the disjoint union $(R_1-N_{R_1}[v])\cup R_2\cup \ldots \cup R_s$ is an induced subgraph of $S(G)-N_{S(G)}[v]\cong S(G-v)$; it then follows that
\begin{align*}
\reg(S(G))\geq \reg(S(G-v))\geq& \reg(R_1-N_{R_1}[v])+\reg(R_2)+\ldots+\reg(R_s)\\
=&(\reg(R_1)-1)+\reg(R_2)+\ldots+\reg(R_s)\\
=&\reg(S(G))-1.
\end{align*}
Therefore, we have either $\reg(S(G))=\reg(S(G-v))$ or else $\reg(S(G))=\reg(S(G-v))+1$ so that the claim follows from the induction on the order of $G$.
\end{proof}
One of the immediate consequence of Lemma~\ref{lem:sg-reduct} is the following:
\begin{corollary}\label{cor:ind-pd-reg}
Any graph $G$ has an induced subgraph $H$ such that $\pd(G)=\reg(S(H))$.
\end{corollary}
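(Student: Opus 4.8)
The plan is to derive Corollary~\ref{cor:ind-pd-reg} by combining the sandwich inequality of Corollary~\ref{cor:prodtosubreg} with the reduction mechanism of Lemma~\ref{lem:sg-reduct}. First I would fix a graph $G$ and set $m:=\reg(S(G))$. By Corollary~\ref{cor:prodtosubreg} we have $\pd(G)\leq m$, so $k:=\pd(G)$ satisfies $0\leq k\leq m$. If $k=m$ we are already done by taking $H=G$, so the interesting case is $k<m$. Here I would invoke Lemma~\ref{lem:sg-reduct} directly: since $\reg(S(G))=m>0$ and $m>k\geq 0$, there exists an induced subgraph $H$ of $G$ with $\reg(S(H))=k=\pd(G)$, which is exactly the assertion.

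The one point that requires a sentence of care is the degenerate case $m=0$, which is not covered by the hypothesis $\reg(S(G))=m>0$ of Lemma~\ref{lem:sg-reduct}. But $\reg(S(G))=0$ forces $S(G)$ to have no edges (regularity $0$ means the edge ideal is zero), hence $G$ itself has no edges, so $\pd(G)=0=\reg(S(G))$ and we take $H=G$. Thus in every case we exhibit an induced subgraph $H$ with $\pd(G)=\reg(S(H))$.

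The only genuine mathematical content is already packaged in the two cited results, so I expect no real obstacle; the proof is essentially a two-line deduction. The subtlety worth flagging — and the part I would be most careful to state correctly — is that the desired subgraph is produced by reducing $S(G)$ down past intermediate values of $\reg$ (the telescoping inequality $\reg(S(G))-1\leq\reg(S(G-v))\leq\reg(S(G))$ in Lemma~\ref{lem:sg-reduct}), so one must make sure the target value $\pd(G)$ lies in the interval $[0,\reg(S(G))]$; this is precisely guaranteed by Corollary~\ref{cor:prodtosubreg}. Once that is observed, the corollary follows immediately.
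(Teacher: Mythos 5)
Your argument is exactly the paper's: the corollary is stated there as an immediate consequence of Lemma~\ref{lem:sg-reduct} together with the bound $\pd(G)\leq\reg(S(G))$ of Corollary~\ref{cor:prodtosubreg}, which is precisely your two-line deduction (including taking $H=G$ when $\pd(G)=\reg(S(G))$). Your extra care about the edgeless case $\reg(S(G))=0$ is a harmless refinement the paper leaves implicit, so the proposal is correct and matches the intended proof.
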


We note that even if Corollary~\ref{cor:ind-pd-reg} does not exactly determines the corresponding induced subgraph $H$ of $G$ satisfying
$\pd(G)=\reg(S(H))$, it naturally translates the projective dimension of graphs to that of the regularity of bipartite graphs.

Our next aim is to compare various upper and lower bounds on the projective dimension and the regularity of graphs. We begin with interpreting the relevant invariants of a graph $G$ in terms of those of the subdivision graph $S(G)$. 

\begin{proposition}\label{prop:imsg}
$\im(S(G))=|G|-\gamma(G)$ for any graph $G$ without any isolated vertices.
\end{proposition}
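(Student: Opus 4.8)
The plan is to establish the equality $\im(S(G)) = |G| - \gamma(G)$ by proving the two inequalities separately, translating each side into a statement about dominating sets in $G$. The basic dictionary is this: $S(G)$ has vertex set $V(G) \cup E(G)$, with the "edge-vertices" $e \in E(G)$ all of degree $2$ (adjacent in $S(G)$ to exactly the two endpoints of $e$), while the original vertices $v \in V(G)$ are adjacent only to the edge-vertices $e$ with $v \in e$. Since $S(G)$ is $C_4$-free (it is a subdivision graph), an induced matching in $S(G)$ is simply a set of edges no two of which are joined by an edge of $S(G)$; equivalently, their vertex sets are pairwise at distance $\geq 2$ in $S(G)$.

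First I would prove $\im(S(G)) \geq |G| - \gamma(G)$. Let $D$ be a minimum dominating set of $G$, so $|D| = \gamma(G)$, and let $W := V(G) \setminus D$, with $|W| = |G| - \gamma(G)$. For each $w \in W$, since $D$ dominates $G$ there is a neighbour $d(w) \in D$ with $w\,d(w) \in E(G)$; let $e_w$ denote the corresponding edge-vertex of $S(G)$, and consider the edge $\{w, e_w\}$ of $S(G)$. The claim is that $M := \{\{w, e_w\} : w \in W\}$ is an induced matching of size $|W|$. These edges are pairwise disjoint (the vertices $w$ are distinct, and each $e_w$ has one endpoint in $D$, so if $e_w = e_{w'}$ then $w = w'$). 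For the induced property, I must check that for $w \neq w'$ there is no edge of $S(G)$ between $\{w, e_w\}$ and $\{w', e_{w'}\}$: an edge of $S(G)$ always joins an original vertex to an edge-vertex, so the only possibilities are $w\,e_{w'}$ or $w'\,e_w$; but $w \,e_{w'}$ being an edge would mean $w$ is an endpoint of the edge $e_{w'} = \{w', d(w')\}$, forcing $w = w'$ (as $w, w' \in W$ and $d(w') \in D$), a contradiction. Hence $M$ is induced and $\im(S(G)) \geq |W| = |G| - \gamma(G)$.

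For the reverse inequality $\im(S(G)) \leq |G| - \gamma(G)$, let $M$ be a maximum induced matching of $S(G)$, $|M| = \im(S(G))$. Every edge of $S(G)$ is of the form $\{v, e\}$ with $v$ an original vertex and $e$ an edge-vertex, so write $M = \{\{v_i, e_i\} : i \in [k]\}$ with $k = \im(S(G))$. The vertices $v_1, \dots, v_k$ are distinct. I claim the set $A := V(G) \setminus \{v_1, \dots, v_k\}$ is a dominating set of $G$; granting this, $\gamma(G) \leq |A| = |G| - k$, i.e. $k \leq |G| - \gamma(G)$, as desired. To see $A$ dominates $G$: suppose some $v_i$ is not dominated by $A$, i.e. $N_G(v_i) \subseteq \{v_1,\dots,v_k\}$. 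Write $e_i = \{v_i, u_i\}$ in $G$; then $u_i \in N_G(v_i)$, so $u_i = v_j$ for some $j$. If $j = i$ that is absurd, so $j \neq i$, and then in $S(G)$ the edge-vertex $e_i$ is adjacent to $v_j$ — an edge of $S(G)$ joining the matching edge $\{v_i, e_i\}$ to the vertex $v_j$ of the matching edge $\{v_j, e_j\}$ — contradicting that $M$ is an induced matching. Hence every $v_i$ has a neighbour in $A$, and the vertices of $A$ dominate themselves, so $A$ is dominating. Combining the two inequalities gives the result. (The hypothesis that $G$ has no isolated vertices is used implicitly: it guarantees $S(G)$ has at least one edge and, more to the point, that every vertex lies on some edge so the dominating-set arguments make sense; it also rules out the degenerate case where $\im(S(G))$ would not behave as expected.)

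The main obstacle, and the step warranting the most care, is the verification in the second inequality that no "crossing" edges of $S(G)$ spoil the induced matching — specifically handling the bookkeeping of which vertices of $S(G)$ are original versus edge-type, and ruling out that a $v_j$ could coincide with an endpoint of a different matching edge's subdivision vertex. One subtlety to watch: a priori a maximum induced matching of $S(G)$ might use an edge $\{v_i, e_i\}$ where $e_i$ is simultaneously the subdivision vertex one would "want" to assign to $v_i$; but this causes no problem since the argument only needs the combinatorial adjacency structure, not any canonical choice. A secondary point is to make sure the first inequality's choice function $w \mapsto d(w)$ is well-defined and that distinct $w$ never force $e_w = e_{w'}$, which is where minimality of $D$ is irrelevant but the partition $V(G) = D \sqcup W$ does the work.
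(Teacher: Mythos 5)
Your proof is correct and follows essentially the same route as the paper's: build an induced matching $\{(x,xd)\colon x\in V\setminus D\}$ from a minimum dominating set $D$ for the lower bound, and show that the complement of the original-vertex endpoints of a maximum induced matching of $S(G)$ dominates $G$ for the upper bound. You simply spell out the disjointness and ``no crossing edge'' checks in more detail than the paper does.
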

\begin{proof}
Suppose that $\gamma(G)=|D|$ for some dominating set $D\subseteq V$. Then for any vertex $x\in V\setminus D$, there exists a vertex $d$ such that $xd\in E$. Now, the set $\{(x,xd)\colon x\in V\setminus D\}$ of edges of $S(G)$ forms an induced matching in $S(G)$; hence, we have $\im(S(G))\geq |G|-\gamma(G)$. On the other hand, let $M=\{(x_1,e_1),\ldots,(x_n,e_n)\}$ be a maximum induced matching of $S(G)$. We claim that the set $D:=V\setminus \{x_1,\ldots,x_n\}$ is a dominating set for $G$ so that $\im(S(G))\leq |G|-\gamma(G)$. Indeed, for any vertex $x_i\in V\setminus D$, the other end vertex of $e_i$, say $y_i$, must be contained in $D$, since otherwise the set $M$ would not be an induced matching; hence, we have 
$x_i\in N_G(D)$.
\end{proof}

We recall that the equality $\hf(\IE(G)^{\vee})=|G|-\ie(G)$ holds for any graph $G$ by our previous remark for hypergraphs. On the other hand, the flower number $\nu(S(G)):=\nu(\Ind(G)^{\vee})$ is rather complicated to describe. 

We call a subset $X\subseteq V$ as an \emph{unstable dominating set} of $G$, if $X$ is a dominating set for $G$ with $E(G[X])\neq \emptyset$. The least cardinality of an unstable dominating set is called the \emph{unstable domination number} of $G$ and denoted by $\gamma_{\us}(G)$.
Observe that the inequality $\gamma(G)\leq \gamma_{\us}(G)\leq \gamma(G)+1$ always holds for any graph $G$.

\begin{lemma}
$\nu(S(G))=|G|-\gamma_{\us}(G)$ for any graph $G$ without any isolated vertices.
\end{lemma}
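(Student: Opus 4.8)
The plan is to mimic the proof of Proposition~\ref{prop:imsg}, replacing the domination condition with the unstable-domination condition and translating it into the language of flowers. Recall that $\nu(S(G))$ is defined to be $\nu(\Ind(G)^{\vee})$, and that the facets of $\Ind(G)^{\vee}$ are exactly the complements $V\setminus e$ for $e\in E(G)$; equivalently, $B(\Ind(G)^{\vee})\cong S(G)$, so an $m$-flower of $\Ind(G)^{\vee}$ is precisely an induced matching $\{(x_1,e_1),\ldots,(x_m,e_m)\}$ of $S(G)$ with the extra property that $\{x_1,\ldots,x_m\}$ is a \emph{face} of $\Ind(G)^{\vee}$, i.e.\ that $V\setminus\{x_1,\ldots,x_m\}$ is a vertex cover of $G$ — in other words, $\{x_1,\ldots,x_m\}$ contains no edge of $G$ only if its complement covers every edge; more usefully, $\{x_1,\dots,x_m\}\in\Ind(G)^\vee$ iff $V\setminus\{x_1,\dots,x_m\}$ is not independent, i.e.\ $G[V\setminus\{x_1,\dots,x_m\}]$ has an edge. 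So the combinatorial content of an $m$-flower is: an induced matching of $S(G)$ of size $m$ together with the requirement that the complement of the chosen vertex-endpoints spans at least one edge of $G$.

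First I would prove the inequality $\nu(S(G))\geq |G|-\gamma_{\us}(G)$. Take an unstable dominating set $X$ of minimum size $\gamma_{\us}(G)$. As in Proposition~\ref{prop:imsg}, for each $x\in V\setminus X$ pick an edge $e_x=xd_x$ with $d_x\in X$ (possible since $X$ dominates), and form $\{(x,e_x):x\in V\setminus X\}$; exactly as in the cited proof this is an induced matching of $S(G)$ of size $|G|-\gamma_{\us}(G)$. The set of chosen vertex-endpoints is $V\setminus X$, and its complement is $X$, which spans an edge of $G$ because $X$ is \emph{unstable}; hence $V\setminus X\in \Ind(G)^\vee$ and the matching is a genuine flower, giving $\nu(S(G))\geq |G|-\gamma_{\us}(G)$. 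For the reverse inequality, let $\{(x_1,e_1),\ldots,(x_m,e_m)\}$ be a maximum flower with $m=\nu(S(G))$, set $D:=V\setminus\{x_1,\ldots,x_m\}$. By the argument in Proposition~\ref{prop:imsg} (the other endpoint $y_i$ of $e_i$ must lie in $D$, else the matching is not induced), $D$ is a dominating set; and the flower condition says exactly that $D=V\setminus\{x_1,\dots,x_m\}$ spans an edge of $G$, so $D$ is an \emph{unstable} dominating set, whence $\gamma_{\us}(G)\leq |D|=|G|-m$, i.e.\ $m\leq |G|-\gamma_{\us}(G)$.

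The main point requiring care — the step I expect to be the genuine obstacle — is the bookkeeping around the empty-flower case and small graphs, because $\nu$ is defined to be $0$ when no $k$-flower exists for $k\geq 3$, whereas $|G|-\gamma_{\us}(G)$ need not be $0$ there. So I would need to check that whenever $|G|-\gamma_{\us}(G)\geq 3$ the flower produced above really has size $\geq 3$ (immediate from the construction), and conversely argue that if $G$ has no isolated vertices and admits no $3$-flower then $|G|-\gamma_{\us}(G)\le 2$; this last assertion is the one to verify honestly, presumably by a short direct analysis: a graph with $\ge 3$ vertices, no isolated vertex, and in which every unstable dominating set has size $\ge |G|-2$ must be very small (e.g.\ a few edges / a triangle / a path on few vertices), and one checks the identity by hand there. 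Once the degenerate range is dispatched, the two inequalities above combine to give $\nu(S(G))=|G|-\gamma_{\us}(G)$ for every graph $G$ without isolated vertices.
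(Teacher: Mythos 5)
Your proof is correct and is essentially the paper's own argument: both directions use the same constructions (pair each vertex outside a minimum unstable dominating set with an edge into that set to build a flower, and conversely take the complement of the flower's vertex endpoints, noting the second endpoint of each $e_i$ lies outside the flower's vertex set, to get an unstable dominating set), just phrased through your identification of flowers of $\Ind(G)^{\vee}$ with induced matchings of $S(G)$ whose unchosen vertices span an edge. The degenerate small-size issue you flag (flowers being defined only for $m\geq 3$) is silently ignored by the paper's proof as well, so it is not a gap relative to the published argument.
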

\begin{proof}
Suppose that $\{(x_1,e_1),\ldots,(x_n,e_n)\}$ is a maximum flower in $S(G)$. This in particular implies that the set $X:=\{x_1,\ldots,x_n\}$ is face of $\Ind(G)^{\vee}$. However, this means that $X$ misses at least one edge of $G$. It then follows that $V\setminus X$ is an unstable dominating set for $G$, since each vertex $x_i$ is dominated by the other end vertex
of $e_i$ contained in $V\setminus X$; hence, $\nu(S(G))\leq |G|-\gamma_{\us}(G)$.

Assume next that $Y\subseteq V$ is an unstable dominating set for $G$ with
$\gamma_{\us}(G)=|Y|$. Then, for each $u\in V\setminus Y$, there exists a vertex $y\in Y$ such that $uy\in E$. However, the set $\{(u,uy)\colon u\in V\setminus Y\}$ forms a flower for $S(G)$, since $E(G[Y])\neq \emptyset$; hence, we conclude that $|G|-\gamma_{\us}(G)\leq \nu(S(G))$.
\end{proof}

We note that the equality $\max\{|G|-\gamma_{\us}(G), |G|-\ie(G)\}=|G|-\gamma(G)$ clearly holds for any graph $G$ so that Proposition~\ref{prop:imsg} also follows from Corollary~\ref{cor:im-bdelta}.

\begin{corollary}\label{cor:im-prod}
If $\gamma(G)=\ie(G)$, then $\im(S(G))\leq \pd(G)\leq \reg(S(G))$.
\end{corollary}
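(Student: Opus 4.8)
The plan is to chain together three facts already available in the excerpt. First I would reduce to graphs without isolated vertices, so that Proposition~\ref{prop:imsg} applies: if $G$ has $k$ isolated vertices, then passing to $G^{\circ}$ decreases each of $\gamma(G)$ and $\ie(G)$ by exactly $k$ (an isolated vertex lies in every dominating set and in every maximal independent set), so the hypothesis $\gamma=\ie$ is preserved; and since an isolated vertex of $G$ is isolated in $S(G)$ and contributes an unused variable to the edge ideal, none of $\im(S(G))$, $\pd(G)$, $\reg(S(G))$ changes. Hence we may assume $G=G^{\circ}$.

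Next I would establish $\im(S(G))\leq\pd(G)$. By Proposition~\ref{prop:imsg}, $\im(S(G))=|G|-\gamma(G)$, and the hypothesis turns this into $|G|-\ie(G)$. By the remark following Corollary~\ref{cor:pdlevi}, the Helly number of the Alexander dual satisfies $\hf(\Ind(G)^{\vee})=|G|-\ie(G)$ (each facet $F$ of $\Ind(G)$ is a maximal independent set, and $V\setminus F$ is a minimal non-face of $\Ind(G)^{\vee}$, so the largest minimal non-face has size $|G|$ minus the smallest maximal independent set), and the same remark records $\hf(\Ind(G)^{\vee})\leq\pd(G)$, obtained by combining $\hf(\D)\leq\reg(\D)+1$ with Terai's duality $\pd(G)=\reg(\Ind(G)^{\vee})+1$. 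Putting these together yields $\im(S(G))=|G|-\gamma(G)=|G|-\ie(G)=\hf(\Ind(G)^{\vee})\leq\pd(G)$.

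The remaining inequality $\pd(G)\leq\reg(S(G))$ is exactly Corollary~\ref{cor:prodtosubreg}, so the two inequalities of the statement follow at once. There is no real obstacle here; the only point requiring care is the isolated-vertex bookkeeping, since Proposition~\ref{prop:imsg} is stated only for graphs without isolated vertices. Alternatively one can bypass Proposition~\ref{prop:imsg} and argue directly from Corollary~\ref{cor:im-bdelta} applied to $\D=\Ind(G)^{\vee}$, noting that $\nu(S(G))=|G|-\gamma_{\us}(G)\leq|G|-\gamma(G)=|G|-\ie(G)=\hf(\Ind(G)^{\vee})$ under the hypothesis, so that $\im(S(G))=\max\{2,\nu(S(G)),\hf(\Ind(G)^{\vee})\}$ is again controlled by $\hf(\Ind(G)^{\vee})\leq\pd(G)$, with the degenerate cases of at most one edge checked separately.
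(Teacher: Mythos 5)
Your proposal is correct and follows exactly the chain the paper intends: $\im(S(G))=|G|-\gamma(G)=|G|-\ie(G)=\hf(\Ind(G)^{\vee})\leq\pd(G)$ via Proposition~\ref{prop:imsg} and the Helly-number remark after Corollary~\ref{cor:pdlevi}, with $\pd(G)\leq\reg(S(G))$ from Corollary~\ref{cor:prodtosubreg}. The isolated-vertex bookkeeping and the alternative route through Corollary~\ref{cor:im-bdelta} are both sound but add nothing beyond what the paper's surrounding remarks already cover.
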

\subsection{Domination bounds and the regularity of subdivisions}
We prove in this subsection that all known upper bounds to projective dimension of graphs are in fact upper bounds to the regularity of  subdivision graphs. Before we proceed further, we state the following upper and lower bounds on the projective dimension of graphs due to Dao and Schweig~\cite{DS1}.
\begin{theorem}\label{thm:ds1}~\cite{DS1}
For any graph $G$,
\begin{equation*}
|G|-\ie(G)\leq \pd(G)\leq |G|-\max\{\tau(G),\epsilon(G)\}.
\end{equation*}
\end{theorem}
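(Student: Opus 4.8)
The plan is to split the statement into its lower and upper bounds, deriving the lower bound from Terai's duality together with a sphere computation, and the upper bound by routing everything through $\reg(S(G))$ via Corollary~\ref{cor:prodtosubreg} and then translating the standard regularity bounds for the subdivision graph $S(G)$ into the domination parameters $\tau$ and $\epsilon$. Throughout we may assume that $G$ has no isolated vertices — deleting one decreases $|G|$ and $\ie(G)$ by one and leaves $\pd(G)$, $\tau(G)$, $\epsilon(G)$ unchanged — and that $E(G)\neq\emptyset$.

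For the lower bound, recall that the facets of $\Ind(G)$ are the maximal independent sets of $G$, so the minimal non-faces of the Alexander dual $\Ind(G)^{\vee}$ are the complements $V\setminus F$ of maximal independent sets $F$. Choosing $F$ with $|F|=\ie(G)$ and setting $K:=V\setminus F$, we have $\Ind(G)^{\vee}[K]\cong S^{|K|-2}$, so $\widetilde{H}_{|K|-2}(\Ind(G)^{\vee}[K])\neq 0$ and hence $\reg(\Ind(G)^{\vee})\geq |K|-1$. By Terai's duality $\pd(G)=\reg(\Ind(G)^{\vee})+1$, we conclude $\pd(G)\geq |K|=|G|-\ie(G)$; this is precisely the inequality $\hf(\Ind(G)^{\vee})=|G|-\ie(G)\leq\pd(G)$ recorded after Corollary~\ref{cor:pdlevi}.

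For the upper bound, Corollary~\ref{cor:prodtosubreg} gives $\pd(G)\leq\reg(S(G))$, so it suffices to prove $\reg(S(G))\leq |G|-\tau(G)$ and $\reg(S(G))\leq |G|-\epsilon(G)$; combining the two then gives $\pd(G)\leq |G|-\max\{\tau(G),\epsilon(G)\}$. For the first, I would use the well-known bound $\reg(\Ind(H))\leq\cd(H)$ together with the identity $\cd(S(G))=|G|-\tau(G)$ (Corollary~\ref{cor:cochord-tau}); to build the required cochordal cover of $S(G)$ one begins with the cover of $E(S(G))$ by the complete bipartite stars at the vertices on the $V(G)$-side and uses a minimum set $X$ dominating an extremal independent set $A$ of $G$, with $|X|=\tau(G)$, to merge and delete $\tau(G)$ of these pieces while keeping each remaining piece cochordal and the union unchanged. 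For the second, Theorem~\ref{thm:reg-betaev} gives $\reg(S(G))\leq\beta(S(G))$, and by Corollary~\ref{cor:betaev-s-face} the quantity $\beta(S(G))$ is the largest size of a minimal vertex-wise dominating set $L=L_V\cup L_E$ of $S(G)$ (with $L_V\subseteq V(G)$ and $L_E\subseteq E(G)$); using the private subdivision edges guaranteed by minimality of $L$, one converts $L$ into an edge-wise dominating set of $G$ of size at most $|G|-|L|$, yielding $\epsilon(G)\leq |G|-\beta(S(G))$ (in fact $\beta(S(G))=|G|-\epsilon(G)$).

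The lower bound and the reduction $\pd(G)\leq\reg(S(G))$ are formal; the substance lies in the two dictionaries, of which only the inequalities ``$\leq$'' are needed. The delicate one is $\cd(S(G))\leq |G|-\tau(G)$: naively merging the stars of two adjacent vertices of $G$ produces a subgraph of $S(G)$ with an induced $4$-cycle, so the construction must carefully keep every piece of the cover cochordal while realizing the savings coming from the independent set witnessing $\tau(G)$. That is where the combinatorics of independence domination genuinely enters, and I expect it to be the main obstacle.
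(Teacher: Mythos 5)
Your overall architecture is sound and matches the paper's own perspective: the paper does not reprove this theorem (it is quoted from~\cite{DS1}), but its surrounding results give exactly the derivation you outline, namely the lower bound via $\hf(\Ind(G)^{\vee})=|G|-\ie(G)\leq\pd(G)$ (the remark after Corollary~\ref{cor:pdlevi}, which your sphere computation reproduces correctly, including the harmless reduction to graphs without isolated vertices), and the upper bounds by factoring through $\pd(G)\leq\reg(S(G))$ (Corollary~\ref{cor:prodtosubreg}) and the two ``dictionaries'' $\cd(S(G))=|G|-\tau(G)$ and $\beta(S(G))\leq\Upsilon(S(G))=|G|-\epsilon(G)$. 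The problem is that in your proposal both dictionaries are asserted rather than proved, and they are precisely where the substance lies.

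For the $\tau$-bound you propose to build a cochordal cover of $S(G)$ of size $|G|-\tau(G)$ directly, starting from the stars on the $V(G)$-side and merging pieces along a minimum set dominating an extremal independent set; you yourself flag that naive merging creates induced $C_4$'s and call this ``the main obstacle'' --- so as written this inequality is simply not established. The paper takes a different and complete route: $\cd(S(G))=\gamma'(S(G))$ because $\gi(S(G))\geq 6$ and $\cd=\gamma'$ for graphs of girth at least $5$ (from~\cite{BC}), then $\gamma'(S(G))=|G|-\alpha(G^2)$ by Horton--Kilakos (Theorem~\ref{thm:hk}), and finally $\tau(G)=\alpha(G^2)$ (Lemma~\ref{lemma:tau}, itself a nontrivial exchange argument). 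For the $\epsilon$-bound, your one sentence ``using the private subdivision edges \ldots one converts $L$ into an edge-wise dominating set of $G$ of size at most $|G|-|L|$'' hides the real work: independence of $L$ in $S(G)$ gives $L_V\cap V(L_E)=\emptyset$, but it gives neither that $L_E$ is a matching (needed for the count $|L_V|+|L_E|\leq|G|-|L_E|$) nor that $L_E$, or any cheaply modified version of $L$, edge-wise dominates $G$; note for instance that one cannot shortcut via maximal matchings, since $\gamma'(G)>\epsilon(G)$ already for $C_7$. Establishing this conversion is exactly the content of Theorem~\ref{thm:sub-ve}, whose proof requires three successive exchange claims to replace an extremal minimal vertex-wise dominating set by one whose edge part is an edge-wise dominating matching. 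If instead you simply invoke Corollary~\ref{cor:cochord-tau} and Corollary~\ref{cor:sg-ev} as known (their proofs are independent of Theorem~\ref{thm:ds1}, so there is no circularity), your argument closes and coincides with the paper's; as a self-contained proof, however, both key inequalities remain genuine gaps.
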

We next verify that both upper bounds of Theorem~\ref{thm:ds1} are in fact upper bounds to $\reg(S(G))$. For that purpose, we reformulate the related graph parameters of $G$ in terms of those of the graph $S(G)$.

We recall that for any graph $G=(V,E)$, the \emph{square} $G^2$ of $G$ is defined to be the graph on $V(G^2)=V$ such that $xy\in E(G^2)$ if and only if $\dis_G(x,y)\leq 2$. Moreover, we denote by $\alpha(G)$, the \emph{independence number} of the graph $G$.

\begin{lemma}\label{lemma:tau}
For any graph $G$, the equality $\tau(G)=\alpha(G^2)$ holds.
\end{lemma}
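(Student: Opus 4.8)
The claim is that $\tau(G)=\alpha(G^2)$, where $G^2$ is the square of $G$ and $\tau(G)=\max\{\gamma(A,G^\circ)\colon A\subseteq V(G^\circ)\text{ independent}\}$. Since adding isolated vertices changes neither side (an isolated vertex of $G$ is isolated in $G^2$ and contributes nothing to domination of nonisolated vertices, while it cannot dominate anything), I may assume $G=G^\circ$ has no isolated vertices. The strategy is to prove the two inequalities separately, using the elementary observation that a set $X$ dominates a vertex $v$ in $G$ (in the sense $v\in N_G(X)$) exactly when $v$ has a neighbour in $X$, which for $v\notin X$ is equivalent to $v$ being adjacent to some $x\in X$ in $G^2$ as well; the square is the natural bookkeeping device for "distance at most $2$".

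\textbf{Step 1: $\tau(G)\le\alpha(G^2)$.} Let $A$ be an independent set of $G$ with $\gamma(A,G)=|A|$ realizing $\tau(G)$; so no set of fewer than $|A|$ vertices dominates $A$ in $G$. I claim $A$ is independent in $G^2$, which immediately gives $\alpha(G^2)\ge|A|=\tau(G)$. Suppose not: then there exist distinct $a,a'\in A$ with $\dis_G(a,a')\le 2$, and since $A$ is $G$-independent we have $\dis_G(a,a')=2$, so there is a common neighbour $w$ of $a$ and $a'$ in $G$. Then the single vertex $w$ dominates both $a$ and $a'$, and for every other vertex of $A$ we may pick an arbitrary neighbour in $G$ (these exist since $G$ has no isolated vertices); this produces a set dominating $A$ of size at most $|A|-1$, contradicting minimality. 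Hence $A$ is $G^2$-independent.

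\textbf{Step 2: $\alpha(G^2)\le\tau(G)$.} Let $B$ be a maximum independent set of $G^2$, so $|B|=\alpha(G^2)$ and any two distinct vertices of $B$ are at $G$-distance $\ge 3$; in particular $B$ is $G$-independent, so $B$ is an admissible set in the definition of $\tau(G)$, and it suffices to show $\gamma(B,G)\ge|B|$, i.e. no set $X$ with $|X|<|B|$ satisfies $B\subseteq N_G(X)$. The key point is that a single vertex $x\in V$ can dominate at most one vertex of $B$: if $x$ were adjacent in $G$ to two distinct $b,b'\in B$, then $\dis_G(b,b')\le 2$, contradicting $b,b'$ being $G^2$-nonadjacent. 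Therefore any set dominating all of $B$ must have size at least $|B|$, giving $\gamma(B,G)\ge|B|$ and hence $\tau(G)\ge|B|=\alpha(G^2)$.

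\textbf{Remarks on difficulty.} There is essentially no hard step here; the whole argument is the repeated use of the dictionary "common $G$-neighbour $\Leftrightarrow$ $G$-distance $2$ $\Leftrightarrow$ adjacency in $G^2$". The only point that requires a moment's care is the "at most one vertex" argument in Step 2 and the symmetric "create a cheaper dominating set" argument in Step 1, both of which hinge on the no-isolated-vertices reduction so that the remaining vertices of the independent set can always be assigned a private dominator. I would state the reduction to $G=G^\circ$ explicitly at the outset so that $\gamma(A,G)$ is always finite and the counting is clean.
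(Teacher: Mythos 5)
Your Step 2 is correct and is exactly the paper's argument for $\alpha(G^2)\leq\tau(G)$: a $G^2$-independent set $B$ is $G$-independent, and since no vertex of $G$ can have two $G$-neighbours in $B$, any $X$ with $B\subseteq N_G(X)$ satisfies $|X|\geq |B|$. The reduction to $G=G^{\circ}$ is also harmless.

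The gap is in the very first sentence of Step 1: ``Let $A$ be an independent set of $G$ with $\gamma(A,G)=|A|$ realizing $\tau(G)$.'' You are assuming that the maximum defining $\tau(G)$ is attained by an independent set $A$ with $\gamma(A,G)=|A|$, and this existence claim is never justified; it is essentially equivalent to the inequality you are proving. (If such an $A$ exists, your flip argument shows it is $G^2$-independent and you are done; conversely, granting the lemma, a maximum independent set of $G^2$ is such an $A$ by your own Step 2.) For an arbitrary maximizer one only knows $\gamma(A,G)\leq |A|$, and your contradiction in Step 1 contradicts the hypothesis $\gamma(A,G)=|A|$ rather than the maximality of $\gamma(A,G)$, so it cannot be run without that hypothesis. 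Nor is the existence claim routine: the natural repair of taking a minimal dominating set $X$ with $|X|=\tau(G)$ and replacing $A$ by a system of private neighbours $\{a_x\colon x\in X\}$ produces a set of the right size whose domination number may nevertheless drop below $|X|$, since two private neighbours of distinct $x,x'\in X$ may share a common neighbour outside $X$. The paper closes exactly this gap with a double-minimality argument: it chooses $A$ to be a maximizer of \emph{minimal order}, takes a minimal $X$ with $A\subseteq N_G(X)$ and $|X|=\tau(G)$, shows that every $x\in X$ has a private neighbour in $A$ and that, by minimality of $A$, every $a\in A$ lies in $P_A^X(x_a)$ for a unique $x_a\in X$, and only then runs the swap $Y:=(X\setminus\{x_a,x_b\})\cup\{u\}$ (for a common neighbour $u$ of a hypothetical pair $a,b$ with $\dis_G(a,b)=2$) to conclude that this particular $A$ is $G^2$-independent with $|A|\geq |X|$. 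Some version of this private-neighbour analysis is needed before your Step 1 can start.
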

\begin{proof}
Assume that $\alpha(G^2)=k$, and let $A=\{a_1,\ldots,a_k\}$ be an independent set of $G^2$. Note that $\dis_G(a_i,a_j)\geq 3$ for any $i\neq j$, and in particular, the set $A$ is also an independent set of $G$.
If $X\subseteq V$ is a subset satisfying $A\subseteq N_G(X)$, we necessarily have  $|A|\leq |X|$, since $|A\cap N_G(x)|\leq 1$ for any $x\in X$. However, it then follows that $k\leq |X|\leq \tau(G)$, that is, $\alpha(G^2)\leq \tau(G)$.

Suppose now that $\tau(G)=n$, and let $A\subseteq V$ be an independent subset of $G$ of minimal order satisfying $\gamma(A;G)=n$. Furthermore, we
let $X\subseteq V$ be a subset such that $A\subseteq N_G(X)$ and $|X|=n$.
Since $X$ is minimal, each vertex in $X$ has at least one private neighbour in $A$. In other words, we have $P_A^X(x)\neq \emptyset$ for any $x\in X$.

On the other hand, since $A$ is minimal, every vertex of $A$ must be the private neighbour of a vertex in $X$. In other words, for each $a\in A$, there exists a unique $x_a\in X$ such that $a\in P_A^X(x_a)$. We then claim that $A$ is an independent set of $G^2$. Assume otherwise that there exist $a,b\in A$ such that $\dis_G(a,b)=2$, and let $u\in N_G(a)\cap N_G(b)$ be any vertex. Observe that $u\notin X$  by the minimality of $A$. However, it then follows that $A\subseteq N_G(Y)$, where 
$Y:=(X\setminus \{x_a,x_b\})\cup \{u\}$, a contradiction. Therefore, we conclude that $\alpha(G^2)\geq |A|\geq |X|=n$. This completes the proof.
\end{proof}

The following result of Horton and Kilakos~\cite{HK} relates the independence number of the square of a graph to the edge domination number of the subdivision graph.

\begin{theorem}\label{thm:hk}~\cite{HK}
If $G$ is a graph, then $\gamma'(S(G))+\alpha(G^2)=|G|$.
\end{theorem}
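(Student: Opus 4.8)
The plan is to convert maximal matchings of $S(G)$ into combinatorial data on $G$ and read off $\gamma'(S(G))$, using the fact that $\gamma'(S(G))$ is the minimum size of a maximal matching of $S(G)$. Write $w_e$ for the vertex of $S(G)$ subdividing $e\in E(G)$, so that $V(S(G))=V(G)\cup\{w_e\colon e\in E(G)\}$ and the edges of $S(G)$ at $w_e$ are exactly the $\{v,w_e\}$ with $v\in e$. Given a matching $M$ of $S(G)$, every edge of $M$ uses a subdivision vertex and distinct edges use distinct ones, so letting $F(M):=\{e\in E(G)\colon w_e\text{ is }M\text{-saturated}\}$ and $h(e)\in e$ the endpoint matched to $w_e$, we have $|M|=|F(M)|$, and $h\colon F(M)\to V(G)$ is injective (two matched edges $\{w_e,h(e)\}$, $\{w_{e'},h(e')\}$ with $h(e)=h(e')$ would share a vertex). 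Writing $H(M):=h(F(M))$ — the set of $M$-saturated vertices of $G$ — and $T(M):=V(G)\setminus H(M)$, injectivity of $h$ gives the basic identity $|M|=|H(M)|=|G|-|T(M)|$.

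The first real step is to prove: \emph{if $M$ is a maximal matching of $S(G)$, then $T(M)$ is an independent set of $G^2$.} Maximality forces $e\in F(M)$ for every edge $e$ of $G$ incident to a vertex of $T(M)$ (otherwise $\{t,w_e\}$ could be added to $M$). Hence if $t,t'\in T(M)$ were adjacent in $G$, then $h(tt')\in\{t,t'\}\subseteq T(M)$, contradicting $h(tt')\in H(M)$; so $T(M)$ is independent in $G$. And if some vertex $x$ had two neighbours $t_1\neq t_2$ in $T(M)$, then $h(xt_1)=x=h(xt_2)$ (each $t_i$ lies in $T(M)$, so cannot be a head), contradicting injectivity of $h$; so every vertex of $G$ has at most one neighbour in $T(M)$. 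These two properties are equivalent to $T(M)$ being independent in $G^2$, since two vertices of an independent set of $G$ lie at distance $2$ exactly when some vertex is adjacent to both. Consequently $\gamma'(S(G))=\min_M|M|=|G|-\max_M|T(M)|\ge |G|-\alpha(G^2)$.

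For the reverse inequality I would run the dictionary backwards. Fix a maximum independent set $T$ of $G^2$; then $T$ is independent in $G$ and each vertex outside $T$ has at most one neighbour in $T$. Every edge $e$ of $G$ meeting $T$ meets it in a single vertex, so letting $x_e$ be its other endpoint the map $e\mapsto x_e$ is injective (two such edges sharing $x_e$ would give a vertex with two neighbours in $T$), whence $M_0:=\{\{w_e,x_e\}\colon e\text{ meets }T\}$ is a matching of $S(G)$ in which no vertex of $T$ is saturated. Every edge of $S(G)$ incident to a vertex $t\in T$ has the form $\{t,w_e\}$ with $e$ meeting $T$, hence with $w_e$ already $M_0$-saturated; therefore, extending $M_0$ greedily to a maximal matching $M$ never saturates a vertex of $T$, i.e.\ $T\subseteq T(M)$. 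By the previous paragraph $T(M)$ is independent in $G^2$, so $\alpha(G^2)=|T|\le|T(M)|\le\alpha(G^2)$, forcing $|M|=|G|-\alpha(G^2)$ and hence $\gamma'(S(G))\le|G|-\alpha(G^2)$. Combining the two bounds yields $\gamma'(S(G))+\alpha(G^2)=|G|$ (the edgeless case $E(G)=\emptyset$ is the degenerate instance $T=V(G)$, $M_0=\emptyset$).

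The only genuinely delicate point is the bookkeeping in the matching-to-data dictionary: the injectivity of $h$ and the exact consequence of maximality (namely $\partial T(M)\subseteq F(M)$). Once that is set up, the step one might expect to be hard — producing a \emph{maximum} independent set of $G^2$ that is simultaneously a dominating set of $G$ — turns out to be unnecessary, since the greedy extension of $M_0$ already supplies a maximal matching with $|T(M)|=\alpha(G^2)$. It is worth recording that, through Lemma~\ref{lemma:tau} ($\tau(G)=\alpha(G^2)$), this identity reads $\gamma'(S(G))=|G|-\tau(G)$, matching the shape of the other subdivision-graph formulas proved in this section.
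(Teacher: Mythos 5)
This theorem is one the paper does not prove at all: it is imported verbatim from Horton and Kilakos~\cite{HK} as an external ingredient (used only to deduce Corollary~\ref{cor:cochord-tau} via Lemma~\ref{lemma:tau}), so there is no internal argument to compare yours against. Your proof is correct and self-contained. The dictionary $M\mapsto (F(M),h,H(M),T(M))$ is set up properly: each matching edge of $S(G)$ uses exactly one subdivision vertex, $h$ is injective, and $|M|=|G|-|T(M)|$; the maximality consequence (every edge of $G$ touching $T(M)$ lies in $F(M)$) together with injectivity of $h$ does show $T(M)$ has no two vertices at distance $\le 2$, giving $\gamma'(S(G))\ge |G|-\alpha(G^2)$ once one invokes, as the paper does via~\cite{YG}, that $\gamma'$ equals the minimum size of a maximal matching. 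The converse direction is handled neatly: starting from a maximum independent set $T$ of $G^2$, the matching $M_0$ saturates $w_e$ for every edge $e$ meeting $T$, so any greedy completion to a maximal matching leaves all of $T$ unsaturated, and the first step then pins $|T(M)|=\alpha(G^2)$ exactly; this sidesteps the more delicate task of exhibiting a maximum independent set of $G^2$ whose complement behaves like a dominating structure. The argument is in the same combinatorial spirit as the original minimum-maximal-matching proof of~\cite{HK}, and the edgeless degenerate case is correctly accounted for, so nothing is missing.
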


We now arrive the proof of the first part of our claim regarding the upper bounds of Theorem~\ref{thm:ds1}. 

\begin{corollary}\label{cor:cochord-tau}
The equality $\cd(S(G))=|G|-\tau(G)$ holds for any graph $G$. In particular, we have $\pd(G)\leq \cd(S(G))$.
\end{corollary}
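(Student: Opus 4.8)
The plan is to establish the equality $\cd(S(G)) = |G| - \tau(G)$ by chaining together two facts already available: the well-known bound $\cd(\Gamma) = \reg(\Gamma)$ when... actually, no — in general only $\reg(\Gamma) \le \cd(\Gamma)$, so I cannot argue that way. Instead I would aim to compute $\cd(S(G))$ directly from the structure of subdivision graphs. First I would observe that a cochordal subgraph of $S(G)$ covering some edges corresponds, after restricting to the subdivision vertices it touches, to a set of edges of $G$, and that the cochordal cover number of $S(G)$ should be controlled by how many ``independent'' pieces $S(G)$ breaks into — which, by Theorem~\ref{thm:hk}, is governed by $\alpha(G^2)$, hence by $\tau(G)$ via Lemma~\ref{lemma:tau}.

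More concretely, the natural route is: (i) show $\cd(S(G)) \le \gamma'(S(G))$, i.e. that $S(G)$ can be covered by $\gamma'(S(G))$ cochordal subgraphs. For this, take a minimum maximal matching $M$ of $S(G)$; each edge $e \in M$ is one edge of the path of length two sitting over some edge of $G$, and I would group the edges of $S(G)$ around the edges of $M$, checking that each group induces (or is contained in) a cochordal — in fact chordal-complement — piece. Since every edge of $S(G)$ is incident to an edge of $M$, and edges of $S(G)$ incident to a fixed $e\in M$ form a star-like configuration whose complement is easily seen to be chordal, this gives a cover of size $|M| = \gamma'(S(G))$. Then (ii) by Theorem~\ref{thm:hk}, $\gamma'(S(G)) = |G| - \alpha(G^2)$, and by Lemma~\ref{lemma:tau}, $\alpha(G^2) = \tau(G)$, so $\cd(S(G)) \le |G| - \tau(G)$. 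For the reverse inequality, I would use the general lower bound $\im(\Gamma) \le \cd(\Gamma)$ together with Proposition~\ref{prop:imsg}, giving $\cd(S(G)) \ge \im(S(G)) = |G| - \gamma(G)$; but this is the wrong bound since $\gamma(G) \le \tau(G)$. So instead the reverse inequality should come from the inequality $\reg(S(G)) \le \cd(S(G))$ only if... again not directly useful. The cleanest reverse bound: any cochordal cover of $S(G)$ yields an edge dominating set of $S(G)$ of the same size (pick one edge from each cochordal piece — since a cochordal graph on the relevant vertex set, being co-chordal, has bounded structure forcing a single edge to dominate it within $S(G)$), giving $\gamma'(S(G)) \le \cd(S(G))$, hence $|G| - \tau(G) = \gamma'(S(G)) \le \cd(S(G))$.

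The main obstacle I expect is the reverse inequality $\gamma'(S(G)) \le \cd(S(G))$: it is not true in a general graph that each cochordal subgraph in an optimal cover can be ``edge-dominated'' by a single one of its own edges, so the argument must genuinely exploit the special shape of $S(G)$ — namely that it is $C_4$-free with all subdivision vertices of degree two. I would need to argue that in a cochordal subgraph $H_i$ of $S(G)$, the edges of $H_i$ form, in $S(G)$, a set whose closed neighbourhood is small enough that a single edge dominates it; equivalently that a cochordal subgraph of a subdivision graph cannot contain an induced matching of size $\ge 2$ in $S(G)$, which would follow because $2K_2$ has chordal complement and... this needs care. Alternatively, and more safely, I would derive both inequalities from the already-proven statements by noting $\im(S(G)) \le \cd(S(G))$ is the standard inequality and then proving $\cd(S(G)) \le |G| - \tau(G)$ via the explicit matching-based construction in step (i)–(ii) above, and separately proving $|G| - \tau(G) \le \cd(S(G))$ by exhibiting, from Theorem~\ref{thm:hk} and Lemma~\ref{lemma:tau}, an induced matching of $S(G)$ of size $|G| - \tau(G)$ — but Proposition~\ref{prop:imsg} says $\im(S(G)) = |G|-\gamma(G)$, which is generally strictly larger than $|G| - \tau(G)$, so the induced matching bound gives the wrong direction. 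Hence the reverse inequality really must be the edge-domination argument, and that is the step to get right; once it is in place, the ``in particular'' clause $\pd(G) \le \cd(S(G))$ follows by combining $\cd(S(G)) = |G| - \tau(G)$ with the upper bound $\pd(G) \le |G| - \tau(G)$ from Theorem~\ref{thm:ds1}.
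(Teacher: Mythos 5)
Your proposal lands on the same skeleton as the paper's proof, but you end up re-deriving its key ingredient from scratch. The paper's argument is a three-line citation chain: by a result of~\cite{BC}, $\cd(H)=\gamma'(H)$ for every graph $H$ with $\gi(H)\geq 5$; since $\gi(S(G))\geq 6$, this gives $\cd(S(G))=\gamma'(S(G))$, and then Theorem~\ref{thm:hk} and Lemma~\ref{lemma:tau} convert $\gamma'(S(G))$ into $|G|-\alpha(G^2)=|G|-\tau(G)$. Your steps (i)--(ii) are exactly an inline proof of that cited identity. The direction $\cd(S(G))\leq\gamma'(S(G))$ via double stars around a minimum maximal matching is correct (and holds for every graph). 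The reverse direction, which you rightly flag as the delicate step, does go through and for essentially the reason you gesture at: a cochordal graph contains no induced $2K_2$ (its complement would contain an induced $C_4$), and since $S(G)$ is triangle-free with girth at least $6$, any cochordal subgraph used in a cover is triangle-, $C_4$- and $C_5$-free as well; a connected $2K_2$-free graph with these properties is acyclic (every cycle of length $\geq 6$ contains an induced $2K_2$) and hence a star or a double star, and a $2K_2$-free graph has at most one component containing an edge. So each piece of a cochordal cover is edge-dominated by a single one of its edges, yielding $\gamma'(S(G))\leq\cd(S(G))$. Once you commit to writing that out, your proof is complete and correct.

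Two smaller remarks. First, your derivation of the ``in particular'' clause from Theorem~\ref{thm:ds1} is logically valid given the equality, but the paper instead uses $\pd(G)\leq\reg(S(G))\leq\cd(S(G))$ (Corollary~\ref{cor:prodtosubreg} plus the standard bound $\reg\leq\cd$); that route is preferable here because the whole point of this subsection is to show that the Dao--Schweig bound $|G|-\tau(G)$ is already a bound on $\reg(S(G))$, so one should not feed Theorem~\ref{thm:ds1} back into the argument. Second, your worry about the lower bound via $\im(S(G))$ was well placed --- that inequality points the wrong way --- and the edge-domination argument is indeed the only viable route for $\gamma'(S(G))\leq\cd(S(G))$.
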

\begin{proof}
We proved in~\cite{BC} that if $G$ is a graph with $\gi(G)\geq 5$, then $\cd(G)=\gamma'(G)$. Since $\gi(S(G))\geq 6$, the result follows from Lemma~\ref{lemma:tau} and Theorem~\ref{thm:hk}. The last claim is the consequence of Corollary~\ref{cor:prodtosubreg} together with the fact that
$\reg(H)\leq \cd(H)$ for any graph $H$.
\end{proof}

\begin{corollary}
If $G$ is a chordal graph, then $\reg(S(G))=\im(S(G))=|G|-\gamma(G)$. 
\end{corollary}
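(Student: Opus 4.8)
The plan is to chain together the results already established in this section together with one new ingredient: the fact that $\reg(H)=\im(H)$ for any subdivision graph $S(G)$ of a chordal graph $G$. We already have the general inequalities $\im(S(G))\leq\reg(S(G))$ (the induced matching bound, valid for all graphs) and $\reg(S(G))\leq\cd(S(G))$ (recalled in the proof of Corollary~\ref{cor:cochord-tau}), so it suffices to prove that $\cd(S(G))\leq\im(S(G))$ when $G$ is chordal. Combining this with Proposition~\ref{prop:imsg}, which gives $\im(S(G))=|G|-\gamma(G)$, and with Corollary~\ref{cor:cochord-tau}, which gives $\cd(S(G))=|G|-\tau(G)$, the desired chain of equalities $\reg(S(G))=\im(S(G))=|G|-\gamma(G)$ collapses to the single numerical identity $\tau(G)=\gamma(G)$ for chordal $G$.

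So the real content I would isolate is the claim: \emph{if $G$ is chordal then $\tau(G)=\gamma(G)$}, equivalently $\alpha(G^2)=\gamma(G)$ by Lemma~\ref{lemma:tau}. First I would note the trivial inequality $\tau(G)\leq\gamma(G)$: if $A$ is any independent set, a dominating set $D$ of $G$ dominates $A$ in the sense of the $\tau$-definition (every vertex of $A$ lies in $N_G[D]$, and one can arrange $A\subseteq N_G(D)$ after a harmless adjustment since isolated vertices are absent), so $\gamma(A;G)\leq\gamma(G)$ for all independent $A$, whence $\tau(G)\leq\gamma(G)$. For the reverse inequality $\gamma(G)\leq\alpha(G^2)$ I would invoke the classical fact that in a chordal graph the minimum size of a dominating set equals the maximum size of a $2$-packing (a set of vertices pairwise at distance $\geq 3$), i.e. $\gamma(G)=\rho(G)$ where $\rho(G)=\alpha(G^2)$; this is a theorem of Meir and Moon for trees, extended to chordal graphs in the domination literature (it is equivalent to saying chordal graphs have the ``domination $=$ packing'' property). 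Citing that result directly finishes the argument.

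Alternatively, to keep the paper self-contained, I would prove $\gamma(G)\leq\alpha(G^2)$ by a direct greedy/elimination-ordering argument using a perfect elimination ordering $v_1,\ldots,v_n$ of the chordal graph $G$: repeatedly pick the smallest-indexed vertex not yet dominated, put its highest-indexed ``parent'' type neighbour into the dominating set, and simultaneously build a $2$-packing by picking the uncovered vertices themselves; chordality guarantees that the closed neighbourhoods encountered behave like a laminar-ish family so that the two sets produced have equal size. The main obstacle I expect is exactly this step — verifying cleanly that the greedy construction on a perfect elimination ordering yields a dominating set and a $2$-packing of the same cardinality — since it requires a careful induction on $n$ peeling off a simplicial vertex and arguing that either it is already dominated (reduce to $G-v$) or it forces a new packing vertex and a new dominator simultaneously (reduce to $G-N_G[v]$, which is again chordal). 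Everything else in the proof is a bookkeeping assembly of Proposition~\ref{prop:imsg}, Corollary~\ref{cor:cochord-tau}, the inequality $\im\leq\reg\leq\cd$, and the elementary bound $\tau\leq\gamma$.
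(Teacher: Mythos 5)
Your reduction is exactly the paper's: you sandwich $|G|-\gamma(G)=\im(S(G))\le \reg(S(G))\le \cd(S(G))=|G|-\tau(G)$ using Proposition~\ref{prop:imsg}, Corollary~\ref{cor:cochord-tau} and the general bounds $\im\le\reg\le\cd$, so that everything collapses to the identity $\tau(G)=\gamma(G)$ for chordal $G$; and your elementary argument for $\tau(G)\le\gamma(G)$ is fine. The genuine gap is in the reverse inequality. You recast $\tau(G)\ge\gamma(G)$ as $\alpha(G^2)\ge\gamma(G)$, i.e.\ as the statement that the domination number equals the $2$-packing number (the maximum size of a set of vertices pairwise at distance at least $3$) in chordal graphs, and you cite an ``extension of Meir--Moon to chordal graphs.'' No such theorem exists, because the statement is false: the $3$-sun (a triangle $u_1u_2u_3$ together with vertices $w_1,w_2,w_3$, where $w_i$ is adjacent to $u_i$ and $u_{i+1}$, indices mod $3$) is chordal and has diameter $2$, so $\alpha(G^2)=1$, yet $\gamma=2$ since no single closed neighbourhood contains all three $w_i$. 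Domination equals $2$-packing is Farber's theorem for \emph{strongly} chordal graphs, and the $3$-sun is precisely the minimal chordal graph that is not strongly chordal; consequently the self-contained perfect-elimination-ordering argument you sketch cannot be completed either --- the step you flagged as ``the main obstacle'' is a genuine obstruction, not a technicality.

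What the paper actually invokes is the theorem of Aharoni and Szab\'o~\cite{AS} that $\gamma(G)=\gamma^i(G)=\tau(G)$ for chordal graphs, where $\tau$ is the independence domination number of Section~\ref{sect:dompar}. This is a different (and true) statement from ``domination equals packing'': in the $3$-sun the independent set $\{w_1,w_2,w_3\}$ needs two vertices to dominate it, so $\tau=2=\gamma$ there even though $\alpha(G^2)=1$. The same example shows that you should not use Lemma~\ref{lemma:tau} to pass freely between $\tau(G)$ and $\alpha(G^2)$, since it gives $\tau(S_3)=2\ne 1=\alpha(S_3^2)$. The repair is simply to cite~\cite{AS} for $\gamma=\tau$ and plug it into your sandwich, which is exactly the paper's one-line proof.
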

\begin{proof}
The equality $\gamma(G)=\tau(G)$ holds for any chordal graph~\cite{AS}; hence, the claim follows from Proposition~\ref{prop:imsg} and Corollary~\ref{cor:cochord-tau}.
\end{proof}


We next verify that the edge-wise domination number of $G$ is closely related to the upper vertex-wise domination number of the subdivision graph of $G$.

\begin{theorem}\label{thm:sub-ve}
$\Upsilon(S(G)))=|G|-\epsilon(G)$ for any graph $G$ without any isolated vertex.
\end{theorem}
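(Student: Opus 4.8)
The plan is to establish the equality $\Upsilon(S(G)) = |G| - \epsilon(G)$ by exhibiting a bijection-like correspondence between minimal vertex-wise dominating sets of $S(G)$ of maximum size and edge-wise dominating matchings of $G$ of minimum size. Write $V = V(G)$, and recall that $V(S(G)) = V \cup E$, where the ``edge vertices'' of $S(G)$ have degree $2$. First I would analyze the structure of a vertex-wise dominating set $A$ of $S(G)$, splitting it as $A = A_V \cup A_E$ with $A_V \subseteq V$ and $A_E \subseteq E$. The key observation is that for an original vertex $x \in V$, the edges of $S(G)$ incident to $x$ are precisely the pairs $(x, e)$ with $x \in e$; such an edge is vertex-wise dominated either by $x$ itself, by the edge-vertex $e$, or by the other original endpoint of $e$. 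So $A$ being vertex-wise dominating translates into a mixed covering condition on $G$.

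The heart of the argument is to show that from a \emph{minimal} vertex-wise dominating set $A$ of $S(G)$ one can pass, without decreasing cardinality, to one of the form $A = E(G) \setminus (\text{something})$, or more precisely to relate $|A|$ to $|G| - \epsilon(G)$. I would argue as follows: given a minimal edge-wise dominating set $F \subseteq E$ of $G$ (which by the remark following its definition we may take to be a matching, so $|F| = \epsilon(G)$), the set $A := (E \setminus \{ (x,e) \text{-type incidences}\})$ — concretely, take $A$ to consist of all original vertices \emph{not} covered by $F$ together with the edge-vertices in $F$, giving $|A| = (|G| - |V(F)|) + |F| = |G| - 2\epsilon(G) + \epsilon(G) = |G| - \epsilon(G)$ when $F$ is a perfect-on-its-support matching — and then check this $A$ is vertex-wise dominating and minimal in $S(G)$. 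Conversely, given a maximum minimal vertex-wise dominating set $A$ of $S(G)$, I would extract from $A_E$ (after replacing, using the homotopy-type reductions of Theorem~\ref{thm:hom-induction}-style neighbourhood-containment arguments already used in this paper, each original vertex in $A_V$ by an incident edge-vertex in a way that preserves minimality and size) an edge-wise dominating matching $F$ of $G$ with $|F| \leq |G| - |A|$, hence $\epsilon(G) \leq |G| - \Upsilon(S(G))$.

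For the lower bound direction $\Upsilon(S(G)) \geq |G| - \epsilon(G)$, the clean route is: take a minimum edge-wise dominating set of $G$, which exists as a matching $F = \{f_1, \dots, f_{\epsilon(G)}\}$; each $f_i$ edge-wise dominates its two endpoints, and since $F$ is edge-wise dominating every vertex of $G^{\circ}$ lies in $N_G[f_i]$ for some $i$. The candidate maximal vertex-wise dominating set of $S(G)$ should be built so that each edge-vertex $f_i \in E \subseteq V(S(G))$ handles the edges of $S(G)$ at its endpoints, and the remaining original vertices of $G$ not lying on any $f_i$ are put into $A$ and handle their own incident edges via the ``other endpoint'' mechanism — here one uses that $G$ has no isolated vertex so every such vertex is on some edge of $G$. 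The delicate point is verifying \emph{minimality}: one must check each chosen vertex of $S(G)$ in this $A$ has a vertex-wise private neighbour, which forces using the minimality/privateness already guaranteed for $F$ (each $f_i$ has an edge-wise private neighbour) together with a careful case analysis of degree-$2$ behaviour of edge-vertices.

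I expect the main obstacle to be exactly this minimality bookkeeping in both directions: vertex-wise dominating sets interact with the bipartite structure of $S(G)$ through the degree-$2$ edge-vertices in a slightly asymmetric way, so matching up ``minimal vertex-wise dominating'' with ``minimal edge-wise dominating (matching)'' requires a nontrivial normal-form step. The natural tool is the neighbourhood-containment reduction (as in Theorem~\ref{thm:hom-induction} and as used repeatedly in Lemmas~\ref{lem:max-local} and in Proposition~\ref{prop:imsg}): whenever an original vertex $x \in A_V$ can be traded for an incident edge-vertex without losing the dominating property, do so, and show this trade is reversible on cardinalities. Once every maximum minimal vertex-wise dominating set of $S(G)$ is normalized to ``edge-vertices of a matching $\cup$ uncovered original vertices'', the count $|A| = |G| - |F|$ is immediate and the equality with $|G| - \epsilon(G)$ follows by optimizing over $F$.
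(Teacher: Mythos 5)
Your overall strategy coincides with the paper's: the lower bound $\Upsilon(S(G))\geq |G|-\epsilon(G)$ is obtained exactly as in the paper, by taking a minimum edge-wise dominating matching $M$ and checking that $U\cup M$ (with $U$ the unmatched vertices) is a \emph{minimal} vertex-wise dominating set of $S(G)$ via the private neighbours $(u,e_u)$ and $(x,f)$; and the upper bound is to be obtained by normalizing a maximum-size minimal vertex-wise dominating set $A=A_v\cup A_e$ so that $A_e$ becomes an edge-wise dominating matching, after which $|A|=|A_v|+|A_e|\leq |G|-|A_e|\leq |G|-\epsilon(G)$. This is precisely the architecture of the paper's proof.

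However, your upper bound has a genuine gap: the normalization is asserted (``whenever an original vertex can be traded for an incident edge-vertex\dots do so'') but not carried out, and this is where essentially all of the work lies. Three separate facts must be established: (i) that a maximum-size minimal vertex-wise dominating set with $A_e\neq\emptyset$ exists at all; (ii) that one can be chosen with $A_e$ edge-wise dominating in $G$; (iii) that one can further be chosen with $A_e$ a matching. None of these follows from a generic neighbourhood-containment reduction, and the appeal to Theorem~\ref{thm:hom-induction} is off-target here, since that statement concerns homotopy types of independence complexes rather than dominating sets. The trades are delicate precisely because $|A|=\Upsilon(S(G))$ is extremal: replacing $x\in A_v$ by the set of edges $xz$ over all ``uncovered'' neighbours $z$ of $x$ could produce a strictly \emph{larger} minimal vertex-wise dominating set, and the paper turns this tension into the key constraint (forcing, e.g., the set $T_x$ of such neighbours to be a singleton in its Claim~2); its Claim~3 further needs a double extremality hypothesis (first minimize $|A_e|$ among maximum-size minimal sets, then minimize the number of incident pairs in $A_e$) together with a case analysis on edge-wise private neighbours to separate two adjacent edges of $A_e$ without losing minimality or cardinality. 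Your final count also overstates what the normalization delivers: one only gets $A_v\cap V(A_e)=\emptyset$ and hence the inequality $|A|\leq |G|-|A_e|$, not the equality $|A|=|G|-|F|$; fortunately the inequality suffices for this direction. So the plan is the right one, but the normal-form step you flag as ``the main obstacle'' is indeed the proof, and it still needs to be supplied.
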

\begin{proof}
Let $M$ be an edge-wise dominating matching of $G$ with $|M|=\epsilon(G)$, and let $U:=U(M)$ be the set of unmatched vertices of $G$ with respect to $M$. We then claim that the set $U\cup M$ is a minimal vertex-wise dominating set for $S(G)$. Observe that it only suffices to prove that $U\cup M$ is minimal, since it is clearly a vertex-wise dominating set. 
If $u\in U$, then there exists a $v\in V(M)$ such that $e_u=uv\in E$, since $M$ is an edge-wise dominating matching of $G$. In particular, the edge $(u,e_u)$ of $S(G)$ is only dominated by the vertex $u$, that is, the edge $(u,e_u)$ is a vertex-wise private neighbour of $u$. On the other hand, if $f=xy\in M$, then the edge $(x,f)$ of $S(G)$ is a vertex-wise private neighbour of $f$; hence, the claim follows. However, we then have
$|G|-\epsilon(G)=|G|-|M|=|U|+|M|\leq \Upsilon(S(G))$.

We prove the reversed inequality in three steps. Note that when $A$ is a vertex-wise dominating set of $S(G)$, we write $A=A_v\cup A_e$ such that $A_v\subseteq V$ and $A_e\subseteq E$. We first verify that there exists a vertex-wise dominating set $A$ of $S(G)$ of appropriate size such that $A_e\neq \emptyset$. We then show that among any such vertex-wise dominating sets, we can find one $A$ for which the set $A_e$ is an edge-wise dominating matching of $G$.

{\it Claim $1$.} For any connected graph $G$, there exists a minimal vertex-wise dominating set $A$ with $\Upsilon(S(G))=|A|$ such that $A_e\neq \emptyset$.

{\it Proof of Claim $1$:} Suppose that $S(G)$ admits a minimal vertex-wise dominating set $A$ with $\Upsilon(S(G))=|A|$ such that $A_e=\emptyset$, that is, $A=A_v\subseteq V$. We note that for each $x\in A$, there exists a vertex $y_x\in N_G(x)$ such that $y_x\notin A$, since otherwise $A$ could not be minimal. We claim that such a vertex $y_x\in N_G(x)$ is unique for each $x\in A$. Suppose otherwise that there exist $x\in A$ together with $y_1,\ldots, y_k\in N_G(x)$ such that $y_1,\ldots,y_k\notin A$.
However, it then follows that the set $(A\setminus \{x\})\cup \{e_1,\ldots, e_k\}$, where $e_i=y_ix$ for $i\in [k]$, would be a minimal vertex-wise dominating set for $S(G)$ having larger size than $A$, which is not possible. Therefore, the vertex $y_x$ must be unique for each $x\in A$. Now, if we let $B:=(A\setminus\{x\})\cup \{e_x\}$, where $e_x:=y_xx\in E$, then the set $B$ is a minimal vertex-wise dominating set of $S(G)$ with $B_e\neq \emptyset$ and $|B|=|A|=\Upsilon(S(G))$.

{\it Claim $2$.} For any connected graph $G$, there exists a minimal vertex-wise dominating set $A$ with $\Upsilon(S(G))=|A|$ such that $A_e$ is an edge-wise dominating set for $G$.

{\it Proof of Claim $2$:} We first verify that such a set $A_e$ can be turned into an edge-wise dominating set for $G$. Suppose that $A_e$ is not edge-wise dominating for $G$. So, there exists a vertex $x\in V$ such that $x$ is neither an end vertex of an edge in $A_e$ nor it is adjacent to any end vertex of an edge in $A_e$. Assume first that $x\notin A_v$. This in particular implies that $N_G(x)\subseteq A_v$. Pick a neighbour, say $y\in N_G(x)$. Observe that if $f$ is an edge of $G$ incident to $y$, then $f\notin A_e$. If $N_G(y)\setminus \{x\}\subseteq A_v$, then the set $(A\setminus \{y\})\cup \{xy\}$ is a minimal vertex-wise dominating set of $S(G)$ having size $\Upsilon(S(G))$ such that $x$ is dominated by the edge $xy$ in $G$. Thus, we may assume that the vertex $y$ has at least one neighbour in $G$ other than $x$ not contained by $A_v$. We denote by $L_y$, the set of such vertices. We note that the set $L_y$ can not contain a vertex $w$ such that any edge of $G$ incident to $w$ is not contained by $A_e$. Indeed, if we add the edge $yw$ to $(A\setminus \{y\})\cup \{xy\}$ for any such vertex $w\in L_y$, the  resulting set would be a minimal vertex-wise dominating set for $S(G)$, which is not possible. This forces that every vertex $w$ in $L_y$ has at least one incident edge in $A_e$. However, it then follows that the set 
$(A\setminus \{y\})\cup \{xy\}$ is a minimal vertex-wise dominating set of
$S(G)$ for which the vertex $x$ is now dominated by the edge $xy$ in $G$.

Assume next that $x\in A_v$. In such a case there must exist at least one vertex in $N_G(x)$ not contained in $A_v$ by the minimality of $A$. We denote by $T_x$, the set of such vertices. Observe that no incident edges to a vertex $z$ in $T_x$ can be contained in $A_e$ by the choice of $x$. However, this forces that any vertex adjacent to a vertex in $T_x$ must belong to $A_v$. Now, if we define
$A':=(A\setminus \{x\})\cup \{xz\colon z\in T_x\}$, then the set $A'$ is clearly a minimal vertex-wise dominating set for $S(G)$. However, since $|A|=\Upsilon(S(G))$, this could be only possible when $|T_x|=1$. If $T_x=\{z\}$, then the set $(A\setminus \{x\})\cup \{xz\}$
is a minimal vertex-wise dominating set for $S(G)$ for which the vertex $x$ is dominated by the edge $xz$ in $G$.

As a result, since we only replace a vertex in $V$ by an edge in $E$ for $A$, the resulting set $A_e$ is always an edge-wise dominating set of $G$.

{\it Claim $3$.} For any connected graph $G$, there exists a minimal vertex-wise dominating set $A$ with $\Upsilon(S(G))=|A|$ such that $A_e$ is an  edge-wise dominating matching for $G$.

{\it Proof of Claim $3$:} Suppose that $A$ is a vertex-wise dominating set with $\Upsilon(S(G))=|A|$ satisfying the following properties:
\begin{itemize}
\item[$(i)$] $A_e$ is an edge-wise dominating set for $G$ of minimum possible size among any minimal vertex-wise dominating sets for $S(G)$ of maximum size,\\
\item[$(ii)$] the set $A_e$ contains least number of pairs of incident edges in $G$ among sets satisfying the condition $(i)$.
\end{itemize}
Let $e=xy$ and $f=yz$ be two such edges in $A_e$. This in particular implies that neither $x$ nor $z$ can belong to $A_v$. Moreover, no edge incident to either $x$ or $z$ can be contained in $A_e$ by the minimality of $A$. Note also that we must have $\deg_G(x), \deg_G(z)>1$. For instance, if $\deg_G(x)=1$, then the set $B:=(A\setminus \{e\})\cup \{x\}$ is a minimal vertex-wise dominating set for $S(G)$ in which $B_e$ has fewer pairs of adjacent edges than $A_e$, a contradiction. So, let $w$ be a neighbour of $x$ in $G$. If $w\notin A_v$, we then define $C:=(A\setminus \{e\})\cup \{xw\}$ so that the set $C$ provides a minimal vertex-wise dominating set for $S(G)$ in which $C_e$ has fewer pairs of incident edges than $A_e$. Therefore, we must have $N_G(x)\setminus \{y\}\subseteq A_v$. We claim that not every vertex in $N_G(x)\setminus \{y\}$ can be edge-wise dominated by some edge in $A_e$. Since otherwise, the set $B:=(A\setminus \{e\})\cup \{x\}$ would be a minimal vertex-wise dominating set for $S(G)$ for which $|B_e|<|A_e|$, violating the condition $(i)$. So, we define $K_x$ to be the subset of
$N_G(x)\setminus \{y\}$ consisting of those vertices being the edge-wise private neighbours of the edge $e=xy$. If we pick a vertex, say $p\in K_x$, and define $D:=(A\setminus \{e\})\cup \{xp\}$, then the set $D$ is a minimal vertex-wise dominating set for $S(G)$ with $|D|=|A|$. In particular, $D_e$ is an edge-wise dominating set for $G$ having fewer pairs of incident edges than $A_e$, a contradiction. This completes the proof of Claim $3$.

Finally, we conclude that $\Upsilon(S(G))=|A|=|A_v|+|A_e|\leq |G|-|A_e|$, where the inequality is due to the fact that $A_e$ is a matching for $G$. On the other hand, since $A_e$ is an edge-wise dominating set for $G$, we must have
$|A_e|\geq \epsilon(G)$ from which we conclude that $\Upsilon(S(G))\leq |G|-\epsilon(G)$.
\end{proof}

The first part of the proof of Theorem~\ref{thm:sub-ve} together with the fact that $\beta(H)\leq \Upsilon(H)$ for any graph $H$ give rise to the following fact.
\begin{corollary}
$\beta(S(G))=\Upsilon(S(G))=|G|-\epsilon(G)$ for any graph $G$.
\end{corollary}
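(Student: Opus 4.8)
$\beta(S(G))=\Upsilon(S(G))=|G|-\epsilon(G)$ for any graph $G$.

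The plan is to assemble this directly from the preceding Theorem~\ref{thm:sub-ve} and the general inequality $\beta(H)\leq \Upsilon(H)$. Theorem~\ref{thm:sub-ve} already establishes $\Upsilon(S(G))=|G|-\epsilon(G)$ for any graph $G$ without isolated vertices (and if $G$ has isolated vertices, they contribute nothing to $S(G)$, $\epsilon$, or the dominating sets in question, so we may harmlessly restrict to $G^{\circ}$). So the only thing left to prove is that $\beta(S(G))\geq |G|-\epsilon(G)$, since the reverse $\beta(S(G))\leq \Upsilon(S(G))=|G|-\epsilon(G)$ is automatic from the general inequality recorded just after the definition of $\beta$.

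For the lower bound, I would revisit the first paragraph of the proof of Theorem~\ref{thm:sub-ve}: there, starting from an edge-wise dominating matching $M$ of $G$ with $|M|=\epsilon(G)$ and the set $U=U(M)$ of $M$-unmatched vertices, one shows that $U\cup M$ is a \emph{minimal} vertex-wise dominating set of $S(G)$ of size $|U|+|M|=|G|-\epsilon(G)$. The key observation is that this set is moreover \emph{independent} in $S(G)$: the elements of $U$ are vertices of $G$ (lying in the part $X_0=V$ of the bipartition of $S(G)$), the elements of $M$ are edges of $G$ (lying in the part $X_1=E$), and no $u\in U$ is incident in $S(G)$ to any $f\in M$, because $u$ is $M$-unmatched, i.e. $u\notin V(M)$, which is exactly the condition for $(u,f)\notin E(S(G))$. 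Since $S(G)$ is bipartite with $U\subseteq X_0$ and $M\subseteq X_1$, there are no edges within $U$ or within $M$ either, so $U\cup M$ is an independent set. Hence $U\cup M$ is a minimal \emph{independent} vertex-wise dominating set of $S(G)$, giving $\beta(S(G))\geq |U\cup M| = |G|-\epsilon(G)$.

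Combining, $|G|-\epsilon(G)\leq \beta(S(G))\leq \Upsilon(S(G))=|G|-\epsilon(G)$, which forces all three quantities to coincide. There is no real obstacle here — the work was done inside Theorem~\ref{thm:sub-ve}; the only point requiring a moment's care is verifying that the set $U\cup M$ produced there is independent in $S(G)$, which follows immediately from the bipartite structure of the subdivision graph together with the defining property of $U$ as the set of $M$-unmatched vertices. One should also note explicitly that the claim holds trivially when $E(G)=\emptyset$ (all parameters being $0$), and that isolated vertices of $G$ can be discarded without affecting any term, so the stated generality ("any graph $G$") is legitimate.
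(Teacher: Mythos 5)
Your proposal is correct and follows essentially the same route as the paper, which likewise derives the corollary from the first part of the proof of Theorem~\ref{thm:sub-ve} (where $U\cup M$ is exhibited as a minimal vertex-wise dominating set of size $|G|-\epsilon(G)$) combined with the general inequality $\beta(H)\leq \Upsilon(H)$. Your explicit check that $U\cup M$ is independent in $S(G)$ — because the $M$-unmatched vertices are not incident in $S(G)$ to any edge of $M$ — is exactly the observation the paper leaves implicit, so nothing is missing.
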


We remark that it is not known in general whether the equality 
$\beta(H)=\Upsilon(H)$ holds for any graph $H$~\cite{JL}. 

The following result completes the proof of our claim regarding the upper bounds of Theorem~\ref{thm:ds1}:
\begin{corollary}\label{cor:sg-ev}
$\reg(S(G))\leq \Upsilon(S(G))=|G|-\epsilon(G)$ for any graph $G$.
\end{corollary}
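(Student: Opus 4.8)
The plan is to combine the two results just proved: Theorem~\ref{thm:reg-betaev}, which gives $\reg(H)\leq \beta(H)$ for any graph $H$ with at least one edge, and Theorem~\ref{thm:sub-ve} together with its corollary, which gives $\beta(S(G))=\Upsilon(S(G))=|G|-\epsilon(G)$. First I would apply Theorem~\ref{thm:reg-betaev} to the graph $H=S(G)$: since $S(G)$ has at least one edge whenever $G$ does (and the statement is vacuous or trivial otherwise, as $\reg(S(G))=0$ when $G$ has no edges), we obtain $\reg(S(G))\leq \beta(S(G))$. Then I would substitute the identification $\beta(S(G))=|G|-\epsilon(G)$ from the preceding corollary, which yields $\reg(S(G))\leq |G|-\epsilon(G)=\Upsilon(S(G))$, exactly the claimed inequality.

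The only mild subtlety worth addressing is the degenerate case where $G$ has isolated vertices or no edges at all, since Theorem~\ref{thm:sub-ve} is stated for graphs without isolated vertices and Theorem~\ref{thm:reg-betaev} requires $E\neq\emptyset$. If $G$ has no edges then $S(G)=G$ has no edges, so $\reg(S(G))=0$ and there is nothing to prove; if $G$ has isolated vertices, they contribute isolated vertices to $S(G)$ as well, and neither $\reg$ nor $\epsilon$ nor $\Upsilon$ is affected by passing to $G^{\circ}$ in the relevant normalization (one can simply restrict attention to $G^{\circ}$, noting $|G^{\circ}|-\epsilon(G^{\circ})$ and $\Upsilon(S(G^{\circ}))$ coincide with their counterparts up to the harmless isolated points). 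So the substantive content is entirely in the two cited theorems, and the corollary is a one-line deduction.

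There is essentially no obstacle here; the hard work has already been done in Theorem~\ref{thm:sub-ve} (the three-step matching-surgery argument) and in the H\'a--Woodroofe bound feeding Theorem~\ref{thm:reg-betaev}. I would therefore present the proof as:

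\begin{proof}
If $E(G)=\emptyset$ the claim is trivial, so assume $E(G)\neq\emptyset$. Then $S(G)$ has an edge, so Theorem~\ref{thm:reg-betaev} gives $\reg(S(G))\leq \beta(S(G))$. By the corollary preceding this one, $\beta(S(G))=\Upsilon(S(G))=|G|-\epsilon(G)$, whence $\reg(S(G))\leq \Upsilon(S(G))=|G|-\epsilon(G)$.
\end{proof}
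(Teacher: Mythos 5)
Your proof is correct and follows exactly the paper's own route: the paper's proof of Corollary~\ref{cor:sg-ev} is the one-line deduction from Theorem~\ref{thm:reg-betaev} applied to $S(G)$ and Theorem~\ref{thm:sub-ve} (via $\beta(S(G))\leq\Upsilon(S(G))$), which is what you do. Your extra attention to the degenerate cases ($E(G)=\emptyset$, isolated vertices) is a harmless refinement the paper leaves implicit.
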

\begin{proof}
The claim follows from Theorems~\ref{thm:reg-betaev} and~\ref{thm:sub-ve}.  
\end{proof}

We close this section by showing that the regularity of any bipartite graph can be bounded from above by its induced matching number from which we may readily deduced an upper bound on the projective dimension of graphs.

\begin{theorem}\label{thm:reg-im-sg}
If $B$ is a bipartite graph with a bipartition $V(B)=X\cup Y$, then $\reg(B)\leq \frac{1}{2}(\im(B)+\min\{|X|,|Y|\})$.
\end{theorem}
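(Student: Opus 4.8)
The plan is to reduce the bound to a statement about one side of the bipartition and then invoke a known combinatorial upper bound for the regularity. The key observation is that for a bipartite graph $B$ with bipartition $V(B) = X \cup Y$, every edge of $B$ joins $X$ to $Y$, so an induced matching of $B$ selects the same number of vertices from $X$ as from $Y$; write $\im(B) = t$ and fix a maximum induced matching $\{x_1y_1,\ldots,x_ty_t\}$ with $x_i \in X$, $y_i \in Y$. I would first recall that $\reg(B) = \reg(\Ind(B))$, and that $\reg$ of a simplicial complex is bounded above by the maximum size of a minimal $S$-face (Theorem~\ref{thm:ha-wood} / the notion introduced just after it); equivalently, for graphs, $\reg(B) \le \beta(B)$ by Theorem~\ref{thm:reg-betaev}. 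So it suffices to bound $\beta(B)$, the upper independent vertex-wise domination number, by $\frac12(\im(B) + \min\{|X|,|Y|\})$.

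The main step is then to show: if $L \subseteq V(B)$ is a minimal independent vertex-wise dominating set of $B$, then $|L| \le \frac12(\im(B) + \min\{|X|,|Y|\})$. Write $L = L_X \cup L_Y$ with $L_X = L \cap X$, $L_Y = L \cap Y$. By minimality, each vertex of $L$ has a vertex-wise private edge; I would argue that the private edges assigned to vertices of $L_X$ form an induced matching-like structure once restricted appropriately. Concretely, for $u \in L_X$ pick a private edge $e_u = uv_u$ with $v_u \in Y$; since $L$ is independent, $v_u \notin L$, and since $e_u$ is vertex-wise dominated only by $u$, no other vertex of $L$ lies in $N_B[e_u]$. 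One checks the edges $\{e_u : u \in L_X\}$ together with one private edge for each vertex of $L_Y$ are pairwise "non-adjacent through $L$", and a counting argument — bounding how many vertices of $L$ can be crammed in while respecting that each private edge isolates its owner — yields $2|L| \le \im(B) + (\text{number of vertices available on the smaller side})$. The cleanest route is probably: the private edges of $L_X$ give an induced matching of size $|L_X|$ inside $B$, so $|L_X| \le \im(B)$, and symmetrically $|L_Y| \le \im(B)$; but that only gives $|L| \le \im(B) + \min\{|L_X|,|L_Y|\} \le \im(B) + \min\{|X|,|Y|\}$ in a weak form, so I need the factor $\frac12$, which forces a more careful argument balancing the two sides: show that the private edges simultaneously witness an induced matching of size roughly $|L|$ while only $\min\{|X|,|Y|\}$ vertices are available on the small side, giving $|L| - \min\{|X|,|Y|\} \le \im(B) - |L|$, hence $2|L| \le \im(B)+\min\{|X|,|Y|\}$.

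The expected main obstacle is verifying that the collection of private edges — one per vertex of $L$, half pointing each way — actually forms (or contains) an induced matching of size comparable to $|L|$, since a priori a private edge $e_u$ for $u \in L_X$ and a private edge $e_w$ for $w \in L_Y$ could share a vertex or be joined by an edge of $B$ not involving $L$; the independence and the vertex-wise-private conditions must be combined carefully to rule out the bad cases or to absorb them into the $\min\{|X|,|Y|\}$ slack. Once that structural lemma is in place, the inequality $2\,\beta(B) \le \im(B) + \min\{|X|,|Y|\}$ follows by the counting above, and combining with $\reg(B) \le \beta(B)$ finishes the proof. Alternatively, if the $\beta$-route proves awkward, I would fall back on bounding $\reg(\Ind(B))$ directly via an induced subgraph / deletion induction on $\min\{|X|,|Y|\}$: deleting a vertex from the smaller side either preserves $\reg$ or drops it by one (as in Lemma~\ref{lem:sg-reduct}), and each such step changes $\frac12(\im + \min\{|X|,|Y|\})$ by at most $1$, which gives the bound by induction with the base case being a graph with one side empty, i.e.\ a graph of regularity $0$.
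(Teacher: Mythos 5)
There is a genuine gap, and it affects both routes you sketch. Your main route needs the inequality $2\beta(B)\le \im(B)+\min\{|X|,|Y|\}$, i.e.\ that every minimal independent vertex-wise dominating set $L$ of $B$ satisfies $2|L|\le \im(B)+\min\{|X|,|Y|\}$. This is false. Take $B=P_4$ with vertices $a,b,c,d$: the set $\{a,d\}$ is independent and is a minimal vertex-wise dominating set, so $\beta(P_4)=2$ (this value is quoted in the paper right after Theorem~\ref{thm:reg-betaev}), while $\im(P_4)=1$ and both sides of the bipartition have two vertices, so your right-hand side is $\tfrac{3}{2}<2$. So the detour through $\reg(B)\le\beta(B)$ cannot possibly yield the factor $\tfrac12$; indeed your own intermediate observation only gives the (true but much weaker) bound $\beta(B)\le 2\im(B)$, since the private edges of $L_X$ and of $L_Y$ each form an induced matching, but the union of the two families need not (in $P_4$ the private edges $ab$ and $cd$ are joined by $bc$). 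The ``careful balancing'' step you hope for is not merely an obstacle: the statement it is supposed to prove is wrong.

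The fallback induction also does not close. When you delete a vertex $y$ of the smaller side, the only inequality available in general is $\reg(B)\le\reg(B-y)+1$, while the quantity $\tfrac12(\im+\min\{|X|,|Y|\})$ may drop by only $\tfrac12$, because $\im$ need not drop at all; iterating this yields only $\reg(B)\le\min\{|X|,|Y|\}$, not the theorem. What is missing is a mechanism guaranteeing that every ``costly'' step (one where the regularity bound increases by $1$) consumes either an induced-matching edge or at least two vertices of $Y$. This is exactly what the paper's proof arranges: it peels $B$ by a process that deletes non-prime vertices for free (regularity unchanged, no charge), handles a degree-one vertex by removing the closed neighbourhood of its neighbour and charging the unit increase to an induced-matching counter $\imc\le\im(B)$, and handles a prime vertex by removing it together with all of its neighbours in $Y$ and charging the unit increase to a counter $\prc\le\tfrac12(|Y|-\imc)$; since the process ends with $\reg(B)=\prc+\imc$, the bound $\reg(B)\le\tfrac12(|Y|+\im(B))$ follows. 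To repair your argument you would need an analogous charging scheme tied to prime vertices (in the spirit of Lemma~\ref{lem:sg-reduct}), not the bound through $\beta$ and not vertex-by-vertex deletion.
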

\begin{proof}
We only prove the inequality $\reg(B)\leq \frac{1}{2}(\im(B)+|Y|)$ from which the general claim follows.

We start with the graph $B_0:=B$ having the bipartition $X_0:=X$ and $Y_0:=Y$. We then introduce a reduction process that creates a bipartite  graph $B_i$ in each state $i\geq 0$, and associate a pair of integers
$(\prc_i,\imc_i)$ to $B_i$ by letting $\prc_0:=0$ and $\imc_0:=0$ at the bottom. 

Now, pick a vertex $x_i\in X_i$ for $i\geq 0$, and assume first that $x_i$ is not a prime vertex of $B_i$. If $\deg_{B_i}(x_i)\geq 2$, we define  $B_{i+1}$ to be the bipartite graph having the bipartition $V(B_{i+1})=X_{i+1}\cup Y_{i+1}$, where $X_{i+1}:=X_i-x_i$, $Y_{i+1}:=Y_i$ together with $(\prc_{i+1},\imc_{i+1})=(\prc_i,\imc_i)$. 

On the other hand, if $\deg_{B_i}(x_i)=1$ and $y_i$ is the only neighbour of it in $B_i$, we let
$B_{i+1}$ to be the bipartite graph with $X_{i+1}:=X_i-N_{B_i}(y_i)$, $Y_{i+1}:=Y_i-y_i$, and set $\imc_{i+1}:=\imc_i+1$, while $\prc_{i+1}:=\prc_i$ (compare to Lemma~$3.25$ of~\cite{BC1}). 

If $x_i$ is a prime vertex of $B_i$, we take $B_{i+1}$ to be the bipartite graph with $X_{i+1}:=X_i-x_i$, $Y_{i+1}:=Y_i-N_{B_i}(x_i)$, and set $\prc_{i+1}:=\prc_i+1$ and $\imc_{i+1}:=\imc_i$. We proceed this process until there exists a $j\geq 0$ such that $X_j=\emptyset$ or $Y_j=\emptyset$. 

Observe that $\reg(B)=\prc_j+\imc_j$ and that the inequalities
$\imc_j\leq \im(B)$ and $\prc_j\leq \frac{1}{2}(|Y|-\imc_j)$
hold. However, we then have 
\begin{align*}
\reg(B)=\prc_j+\imc_j&\leq \frac{1}{2}(|Y|-\imc_j)+\imc_j\\
&=\frac{1}{2}(|Y|+\imc_j)\\
&\leq \frac{1}{2}(|Y|+\im(B)).
\end{align*}
\end{proof}

\begin{corollary}\label{cor:prod-dim-gamma}
For any graph $G$, we have $\pd(G)\leq |G|-\frac{1}{2}\gamma(G)$.
\end{corollary}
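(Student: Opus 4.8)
The statement falls out by chaining three results already in hand, so the plan is just to line them up in the right order. The key structural remark is that $S(G)$ is a bipartite graph with the natural bipartition $V(S(G)) = V(G) \cup E(G)$, and that $\min\{|V(G)|,|E(G)|\} \le |V(G)| = |G|$; this is precisely the input shape required by Theorem~\ref{thm:reg-im-sg}.

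First I would reduce to the case that $G$ has no isolated vertex. Passing from $G$ to $G^{\circ}$ does not change $\pd(G)$ (an isolated vertex contributes only a free polynomial variable to the edge ring), whereas every isolated vertex belongs to each dominating set, so $\gamma(G) = \gamma(G^{\circ}) + (|G| - |G^{\circ}|)$. Hence
\[
|G| - \tfrac12\gamma(G) \;=\; \bigl(|G^{\circ}| - \tfrac12\gamma(G^{\circ})\bigr) + \tfrac12\bigl(|G| - |G^{\circ}|\bigr) \;\ge\; |G^{\circ}| - \tfrac12\gamma(G^{\circ}),
\]
so it is enough to prove the bound for $G^{\circ}$; in particular we may assume $E(G) \ne \emptyset$ (the edgeless case being trivial, as all quantities involved vanish or are nonnegative).

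Now I would simply combine. By Corollary~\ref{cor:prodtosubreg}, $\pd(G) \le \reg(S(G))$. Applying Theorem~\ref{thm:reg-im-sg} to $B = S(G)$ with the bipartition $V(G) \cup E(G)$, and then bounding $\min\{|V(G)|,|E(G)|\} \le |G|$, gives $\reg(S(G)) \le \tfrac12\bigl(\im(S(G)) + |G|\bigr)$, while Proposition~\ref{prop:imsg} evaluates $\im(S(G)) = |G| - \gamma(G)$. Substituting,
\[
\pd(G) \;\le\; \tfrac12\bigl((|G| - \gamma(G)) + |G|\bigr) \;=\; |G| - \tfrac12\gamma(G),
\]
as claimed. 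Since every ingredient is already established, there is no genuine obstacle here; the only spot deserving a moment's attention is the isolated-vertex bookkeeping in the first step, which is why I would dispatch it up front (one could alternatively note that all three cited statements are transparently insensitive to adding or deleting isolated vertices).
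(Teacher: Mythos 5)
Your proposal is correct and follows essentially the same route as the paper: chain Corollary~\ref{cor:prodtosubreg}, Theorem~\ref{thm:reg-im-sg} applied to $S(G)$ with bipartition $V(G)\cup E(G)$, and Proposition~\ref{prop:imsg}. The only difference is your explicit bookkeeping for isolated vertices (which the paper silently omits), and that extra care is harmless and in fact tightens the exposition.
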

\begin{proof}
Since $\im(S(G))=|G|-\gamma(G)$ by Proposition~\ref{prop:imsg}, it then follows from Corollary~\ref{cor:prodtosubreg} together with Theorem~\ref{thm:reg-im-sg} that
$$\pd(G)\leq \reg(S(G))\leq \frac{1}{2}(\im(S(G))+|G|)=\frac{1}{2}(|G|-\gamma(G)+|G|)=|G|-\frac{1}{2}\gamma(G).$$
\end{proof}

Observe that the bound of Corollary~\ref{cor:prod-dim-gamma} is sharp as the graph $C_4$ satisfies that $\pd(C_4)=3=\reg(C_8)=|C_4|-\frac{1}{2}\gamma(C_4)=3$.

We leave it open the discussion whether the hypergraph versions of the domination parameters, the independence domination number $\tau(\HE)$ and the edge-wise domination number $\epsilon(\HE)$ of a hypergraph $\HE$ as they are introduced in~\cite{DS2,DS3} provide upper bounds on the regularity of the Levi graph of $\HE$.
However, we prefer to state an interesting connection between the parameter $\epsilon(\HE)$ and the induced matching number of the corresponding Levi graph.
\begin{definition}
Let $B$ be a bipartite graph with a bipartition $X\cup Y$ containing no isolated vertex. We define $h_X(B)$ to be the least cardinality of a subset $S\subseteq X$ such that for every $y\in Y$ there exists an $s\in S$ satisfying $\dis_B(s,y)\leq 3$ (the parameter $h_Y(B)$ can be defined analogously).
\end{definition}
\begin{lemma}
$\epsilon(\HE)=h_{\E}(\LE(\HE))\leq \im(\LE(\HE))$ for any hypergraph $\HE=(V,\E)$.
\end{lemma}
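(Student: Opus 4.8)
The plan is to prove the two parts separately: first the equality $\epsilon(\HE)=h_{\E}(\LE(\HE))$, and then the inequality $h_{\E}(\LE(\HE))\leq \im(\LE(\HE))$. For the equality, the key observation is a dictionary between edges of $\HE$ and vertices of $\HE$ on one side, and the distance-$3$ balls in the bipartite graph $\LE(\HE)$ on the other. Concretely, an edge $F\in\E$ and a vertex $v\in V$ satisfy $\dis_{\LE(\HE)}(F,v)\le 3$ if and only if either $v\in F$ (distance $1$) or there is some edge $F'\in\E$ with $v\in F'$ and $F\cap F'\neq\emptyset$ (distance $3$); that is, precisely when $F$ \emph{edge-wise dominates} $v$ in the hypergraph sense of~\cite{DS2,DS3}. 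Hence a subset $S\subseteq\E$ is an edge-wise dominating set of $\HE$ (every $v\in V$ is edge-wise dominated, where for hypergraphs one restricts to vertices lying in some edge, i.e. $V(\HE^\circ)$) exactly when for every $y\in V$ lying in some edge there is $s\in S$ with $\dis_{\LE(\HE)}(s,y)\le 3$. Since the vertices of $\LE(\HE)$ on the $V$-side that are isolated are exactly those lying in no edge, "every $y$ in the $Y$-side of the Levi graph'' (after discarding isolated vertices) is the same as "every vertex of $V(\HE^\circ)$''. Minimizing $|S|$ on both sides gives $\epsilon(\HE)=h_{\E}(\LE(\HE))$.

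For the inequality $h_{\E}(\LE(\HE))\le\im(\LE(\HE))$, I would argue that any minimum distance-$3$ dominating set from the $\E$-side can be "spread out'' into an induced matching of at least the same size. Write $B:=\LE(\HE)$ with bipartition $X\cup Y$ where $X=\E$ and $Y=V$ (after removing isolated vertices, harmlessly). Take a set $S\subseteq X$ with $|S|=h_X(B)$ that distance-$3$ dominates $Y$. Greedily extract a maximal subset $S'\subseteq S$ together with neighbours: pick $s_1\in S$, choose a neighbour $y_1\in N_B(s_1)$, discard from $S$ every vertex within distance $2$ of $s_1$ (equivalently every $s$ with a common neighbour with $s_1$, plus $s_1$ itself), and repeat. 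If this process selects $s_1,\dots,s_t$ with chosen private-type neighbours $y_1,\dots,y_t$, then by construction $s_i$ and $s_j$ ($i\ne j$) have no common neighbour, so after possibly re-choosing the $y_i$ one can ensure $\{s_iy_i\}$ is an induced matching of size $t$; thus $\im(B)\ge t$. It remains to see $t\ge h_X(B)$: the set $\{s_1,\dots,s_t\}$ itself still distance-$3$ dominates $Y$, because any $y\in Y$ was within distance $3$ of some $s\in S$, and that $s$ was either kept or discarded because it shares a neighbour with some $s_i$, in which case $\dis_B(s_i,y)\le\dis_B(s_i,s)+\dis_B(s,y)\le 2+3$—this is too weak, so the cleaner route is: redo the greedy selection so that at each step we remove only those $s$ within distance $2$, and observe that the remaining distance-$3$ balls of $\{s_1,\dots,s_t\}$ still cover $Y$ by maximality of the chosen set $S$ of minimum size, forcing $t=h_X(B)$ exactly. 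I would phrase this as: a minimum distance-$3$ dominating set $S$ must be an "independent set at distance $2$'' (no two elements share a common neighbour), for otherwise merging would contradict minimality—this is the crisp lemma—and then $S$ together with one neighbour per element is directly an induced matching.

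So the clean structure I would write down is: (1) a minimum-size distance-$3$ dominating set $S\subseteq\E$ of $\LE(\HE)$ can be chosen so that no two of its elements have a common neighbour in $V$ (any two edges of $\HE$ in $S$ are disjoint); (2) for such $S$, picking for each $s\in S$ one neighbour $y_s\in N_B(s)$—with the $y_s$ chosen distinct, possible since the $s$'s are pairwise "far''—yields an induced matching $\{sy_s:s\in S\}$ of $B$ of size $|S|=h_{\E}(\LE(\HE))$; hence $h_{\E}(\LE(\HE))\le\im(\LE(\HE))$. Combined with the distance dictionary of the first paragraph, this gives both the equality and the inequality.

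The main obstacle I anticipate is step (1): making rigorous that a minimum distance-$3$ dominating set on the $\E$-side can be taken to consist of pairwise disjoint edges of $\HE$. The subtlety is that replacing two intersecting edges $F,F'\in S$ by a single edge need not re-cover the vertices that $F$ or $F'$ was covering at distance $3$ through \emph{their} intersections with other edges; one has to check that if $F\cap F'\neq\emptyset$ then $F$ and $F'$ dominate (at distance $\le 3$) exactly the same set of vertices of $V$ that lie in edges meeting $F\cup F'$, so that dropping one of them cannot destroy coverage—which would contradict minimality only if coverage is genuinely lost. I would handle this by the direct observation that if $F\cap F'\ne\emptyset$ and $v$ is distance-$3$ dominated by $F'$ via an edge $F''$ with $v\in F''$, $F'\cap F''\ne\emptyset$, it is \emph{not} in general true that $F$ also dominates $v$; so instead of a "merging'' argument I will use the greedy/extremal argument: start from any minimum $S$ and argue that a maximal pairwise-disjoint sub-collection already dominates, using that each discarded $F'$ lies at distance $2$ from some kept $F$ and hence every vertex $F'$ needed to cover is within distance... —again the naive triangle bound gives $5$, not $3$. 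The honest fix, and the one I expect to commit to, is to prove the inequality directly without going through a structural lemma: run the reduction process of Theorem~\ref{thm:reg-im-sg}'s style on $B=\LE(\HE)$ from the $\E$-side, each step either deleting a non-"private'' vertex of $X$ or recording an induced-matching edge when a degree-one situation arises, and track that the recorded induced matching has size at least $h_{\E}(B)$ because every vertex of $Y$ gets "charged'' to a recorded matching edge within distance $3$; this mirrors exactly how $\imc_j\le\im(B)$ and $\reg$ was bounded there. That bookkeeping—ensuring no vertex of $Y$ escapes being distance-$3$-charged—is where the real work lies, and is the step I would budget the most care for.
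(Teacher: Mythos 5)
Your treatment of the equality $\epsilon(\HE)=h_{\E}(\LE(\HE))$ via the distance dictionary is fine and is essentially what the paper means by ``a direct consequence of the definitions.'' The inequality, however, is not actually proved: you yourself flag that your step (1) --- replacing a minimum distance-$3$ dominating set $S\subseteq\E$ by one whose members are pairwise disjoint --- cannot be closed by a merging argument or by the triangle inequality (which only yields distance $5$), and your fallback ``reduction process'' is left as a sketch whose bookkeeping you explicitly defer. As written, the central step is a genuine gap.

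The gap is also a detour, because pairwise disjointness of the chosen edges is not needed. The vertices $E_1,\dots,E_k$ all lie on one side of the bipartition of $\LE(\HE)$, so two intersecting edges of $\HE$ are still non-adjacent as vertices of the Levi graph; for $\{(v_1,E_1),\dots,(v_k,E_k)\}$ to be an induced matching one only needs $v_i\in E_i$ and $v_i\notin E_j$ for $j\neq i$. That is exactly what minimality of an edge-wise dominating set delivers: if some $E_i$ satisfied $E_i\subseteq\bigcup_{j\neq i}E_j$, then every vertex edge-wise dominated by $E_i$ (whether it lies in $E_i$ or in an edge meeting $E_i$ at some $u\in E_i$, since that $u$ belongs to some $E_j$) would already be edge-wise dominated by some $E_j$ with $j\neq i$, so $E_i$ could be discarded. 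Hence each $E_i$ contains a private vertex $v_i\notin\bigcup_{j\neq i}E_j$, and these $k=\epsilon(\HE)$ pairs form an induced matching. This is the paper's one-line argument; your ``independent at distance $2$'' lemma is both unestablished and unnecessary.
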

\begin{proof}
The equality is a direct consequence of the definitions, and for the inequality, suppose that $\{E_1,\ldots,E_k\}\subseteq \E$ is a minimal edge-wise dominating set of cardinality $\epsilon(\HE)$. By the minimality, there exists a vertex $v_i\in E_i$ which is privately edge-wise dominated by $E_i$ in $\LE(\HE)$. However, the set $\{(v_1,E_1),\ldots,(v_k,E_k)\}$ forms an induced matching in $\LE(\HE)$. 
\end{proof}
Observe that Theorem~\ref{thm:reg-im-sg} naturally implies that
$\reg(\LE(\HE))\leq |\HE|-h_{\E}(\HE)$ whenever $\im(\LE(\HE))\leq |\HE|/3$ or $\reg(\LE(\HE))\leq |\HE|/2$. On the other hand, the inequality $\tau(\HE)+h_V(\LE(\HE))\leq |\HE|$ trivially holds for any hypergraph $\HE$. In particular, we wonder whether such a special notion of a distance-three domination on bipartite graphs always provides an upper bound to the regularity of such graphs.
\begin{question}
Does the inequality $\reg(B)\leq \max\{|X|-h_Y(B), |Y|-h_X(B)\}$ hold for any bipartite graph with a bipartition $X\cup Y$ containing no isolated vertex?
\end{question}
\section{Projectively prime graphs}
In this section, in analogy with the notion of prime graphs introduced for the regularity calculations of graphs~\cite{BC1}, we demonstrate that a similar notion could be useful for the projective dimension of graphs as well. 

\begin{definition}
A connected graph $G$ is called a \emph{projectively prime graph} over a field $\kk$, if $\pd_{\kk}(G-x)<\pd_{\kk}(G)$ for any vertex $x\in V(G)$. 
Furthermore, we call a connected graph $G$ as a \emph{perfect projectively prime graph} if it is a projectively prime graph over any field. 
\end{definition}

Obvious examples of perfect projectively prime graphs can be given by the paths $P_{3k}$ and $P_{3k+2}$ for any $k\geq 1$, and cycles $C_n$ for any $n\geq 3$. The primeness of the corresponding paths directly follows from Corollary~\ref{cor:im-prod} together with the fact that $\reg(S(P_m))=\im(S(P_m))=\lfloor \frac{2m-1}{3}\rfloor$ for any $m\geq 2$.
Apart from these, our next result shows that the join of any two graphs is a perfect projectively prime graph. 

When $G$ and $H$ are two graphs on the disjoint set of vertices, we write $G*H$, the \emph{join} of $G$ and $H$, for the graph obtained from the disjoint union $G\cup H$ by inserting an edge between any two vertices $x\in V(G)$ and $y\in V(H)$. The calculation of projective dimension
of the graph $G*H$ seems to first appear in~\cite{AM}, while we here provide an independent proof.

\begin{proposition}\label{prop:join-prime}
If $G$ and $H$ are two graphs on the disjoint sets of vertices, then the graph $G*H$ is a perfect \bibliography{prod-dim}
projectively prime graph. In particular, we have $\pd(G*H)=\reg(S(G*H))=|G|+|H|-1$.
\end{proposition}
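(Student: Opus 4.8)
The plan is to pin down $\pd_{\kk}(G*H)$ exactly over every field $\kk$ and then obtain the equality with $\reg(S(G*H))$ and the perfect primeness as formal corollaries; write $n:=|G|+|H|$, and note that $G*H$ is connected with no isolated vertex because both $G$ and $H$ are nonempty. For the lower bound $\pd_{\kk}(G*H)\geq n-1$, I would first observe that no independent set of $G*H$ can meet both $V(G)$ and $V(H)$ (every vertex of $G$ is adjacent to every vertex of $H$), so as abstract complexes $\Ind(G*H)=\Ind(G)\cup\Ind(H)$, the two pieces meeting only in the empty face; geometrically this realizes $\Ind(G*H)$ as the disjoint union of the nonempty complexes $|\Ind(G)|$ and $|\Ind(H)|$, whence $\widetilde{H}_0(\Ind(G*H);\kk)\neq0$, its dimension (number of connected components minus one) being independent of $\kk$. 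Taking $W=V(G*H)$ in Hochster's formula then gives $\beta_{n-1,n}(\kk[V]/I(G*H))=\dim_{\kk}\widetilde{H}_0(\Ind(G*H);\kk)\neq0$, so $\pd_{\kk}(G*H)\geq n-1$ for every $\kk$.

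For the matching upper bound I would invoke only results already in the paper:
\[
\pd(G*H)\;\leq\;\reg(S(G*H))\;\leq\;\cd(S(G*H))\;=\;|G*H|-\tau(G*H)\;=\;n-\alpha\big((G*H)^2\big),
\]
using Corollary~\ref{cor:prodtosubreg}, the bound $\reg\leq\cd$, Corollary~\ref{cor:cochord-tau}, and Lemma~\ref{lemma:tau}. Since any two vertices of $G*H$ lie within distance $2$, we have $(G*H)^2=K_n$, hence $\alpha((G*H)^2)=1$ and $\pd(G*H)\leq\reg(S(G*H))\leq n-1$; combined with the previous paragraph this forces $\pd_{\kk}(G*H)=\reg_{\kk}(S(G*H))=n-1$ over every field. (Equivalently one could use Corollary~\ref{cor:sg-ev}: $\reg(S(G*H))\leq|G*H|-\epsilon(G*H)$ with $\epsilon(G*H)=1$, because a single cross-edge $gh$ already satisfies $N_{G*H}[gh]=V(G*H)$.)

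For primeness, let $x\in V(G*H)$; by symmetry assume $x\in V(G)$, so that $(G*H)-x=(G-x)*H$. If $|G|\geq2$, this is again a join of two graphs on disjoint nonempty vertex sets, and the formula just proved applies to it, giving $\pd_{\kk}((G*H)-x)=(|G|-1)+|H|-1=n-2<n-1$. If $|G|=1$, then $(G*H)-x$ is the induced subgraph $H$, a graph on $n-1\geq1$ vertices, and the standard bound $\pd_{\kk}(H)\leq|H|-1$ (equivalently $\depth\kk[V(H)]/I(H)\geq1$, since the full vertex set is never a minimal vertex cover) gives $\pd_{\kk}(H)\leq n-2<n-1$. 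In every case $\pd_{\kk}((G*H)-x)<\pd_{\kk}(G*H)$ for all $\kk$, so $G*H$ is a perfect projectively prime graph.

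The step I expect to be the crux is the lower bound: one must make sure that $\Ind(G*H)$ genuinely lives on the full vertex set $V(G*H)$, so that the top strand $|W|=n$ of Hochster's formula is available and isolates $\widetilde{H}_0$, and one must keep track of the degenerate cases ($|G|=1$, $H$ edgeless) — precisely the ones that resurface in the primeness argument. Everything downstream is routine bookkeeping with the cited results.
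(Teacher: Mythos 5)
Your proof is correct, and it diverges from the paper's in one essential place: the lower bound. The paper proves $\pd(G*H)\geq |G|+|H|-1$ by passing to the Alexander dual and asserting that $\Ind(G*H)^{\vee}$ is the join of the boundary complexes of the simplexes on $V(G)$ and $V(H)$, hence a triangulated $(|G|+|H|-3)$-sphere; you instead stay on the primal side, observe that $\Ind(G*H)$ is the disjoint union of the nonempty complexes $\Ind(G)$ and $\Ind(H)$, and read off $\beta_{n-1,n}\neq 0$ from the top strand of Hochster's formula. These are Alexander-dual incarnations of the same fact, but your version has two small advantages: the field-independence needed for "perfect" primeness is transparent (only $\widetilde H_0$ is used), and it sidesteps a slight imprecision in the paper, since $\Ind(G*H)^{\vee}$ literally equals that join of boundary complexes only when $G$ and $H$ are edgeless (in general it properly contains the sphere, though the required homology non-vanishing still follows by duality). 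For the upper bound you and the paper essentially coincide: the paper uses $\epsilon(G*H)=1$ with Corollary~\ref{cor:sg-ev} (your parenthetical remark), while your main route goes through $\reg\leq\cd$, Corollary~\ref{cor:cochord-tau} and Lemma~\ref{lemma:tau} via $(G*H)^2=K_n$; both are already available in the paper. Finally, you make explicit the primeness step — distinguishing $|G|\geq 2$ (the deleted graph is again a join) from $|G|=1$ (where one needs $\pd(H)\leq |H|-1$, justified by the full vertex set never being a minimal vertex cover) — which the paper compresses into "directly follows"; spelling this out, together with the standing assumption that $G$ and $H$ are nonempty, is a genuine gain in care rather than a deviation in substance.
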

\begin{proof}
We prove the second claim from which the projectively primeness of $G*H$ directly follows.
Since $\epsilon(G*H)=1$, we have $\pd(G*H)\leq \reg(S(G*H))\leq |G|+|H|-1$ by Corollary~\ref{cor:sg-ev}. On the other hand, the Alexander dual of $\Ind(G*H)$ triangulates a $(|G|+|H|-3)$-dimensional sphere. Indeed, the complex $\Ind(G*H)^{\vee}$ is just the simplicial join of the boundary complexes of corresponding simplexes on $V(G)$ and $V(H)$. Therefore, we have $|G|+|H|-1\leq \pd(G*H)$ from which we conclude the claim.
\end{proof}

We note that the complete multipartite graph $K_{n_1,\ldots,n_m}$ for any $n_i\geq 1$ and $m\geq 2$ is a perfect projectively prime graph as a result of Proposition~\ref{prop:join-prime}.

As in the case of prime graphs, the notion of projectively prime graphs allows us to reformulate the projective dimension as a generalized (weighted) induced matching problem that we describe next.

\begin{definition}
Let $G$ be a graph and let $\RE=\{R_1,\ldots, R_r\}$ be a set of pairwise vertex disjoint induced subgraphs of $G$ such that $|V(R_i)|\geq 2$ for each
$1\leq i\leq r$. Then $\RE$ is said to be an \emph{induced decomposition} of $G$ if the induced subgraph of $G$ on $\bigcup_{i=1}^r V(R_i)$
contains no edge of $G$ that is not contained in any of $E(R_i)$, and
and  $\RE$ is maximal with this property. The set of induced decompositions of a graph $G$ is denoted by $\ID(G)$. 
\end{definition}

\begin{definition}
Let $\RE=\{R_1,\ldots, R_r\}$ be an induced decomposition of a graph $G$. If each $R_i$ is a projectively prime graph, 
then we call $\RE$ as a \emph{projectively prime decomposition} of $G$, and the set of projectively prime decompositions of a graph $G$ is denoted by
$\PD(G)$. 
\end{definition}

Obviously, the set $\PD(G)$ is non-empty for any graph $G$.

\begin{theorem}\label{thm:pd-prime-factor}
For any graph $G$ and any field $\kk$, we have $$\pd_{\kk}(G)=\max \{\sum_{i=1}^{r}\pd_{\kk}(H_i)\colon \{H_1,\ldots,H_r\}\in \PD_{\kk}(G)\}.$$
\end{theorem}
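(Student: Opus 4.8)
The plan is to prove the two inequalities $\pd_{\kk}(G)\geq \sum_i \pd_{\kk}(H_i)$ and $\pd_{\kk}(G)\leq \sum_i \pd_{\kk}(H_i)$ separately, the first being the easy direction and the second the crux of the argument.

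For the lower bound, fix a projectively prime decomposition $\RE=\{H_1,\ldots,H_r\}\in \PD_{\kk}(G)$. The key tool is that the disjoint union $H_1\cup\cdots\cup H_r$ is an induced subgraph of $G$ (by the definition of an induced decomposition, the subgraph induced on $\bigcup_i V(H_i)$ carries no extra edges). Since projective dimension is monotone under taking induced subgraphs (equivalently, $\pd(G)\geq \pd(G-x)$ for any $x$, iterated; this can be read off from the long exact sequence coming from deleting a vertex, or from $\reg(\Ind(G)^{\vee})$ being monotone under the corresponding restriction), we get $\pd_{\kk}(G)\geq \pd_{\kk}(H_1\cup\cdots\cup H_r)$. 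Then I would invoke the standard additivity of projective dimension over disjoint unions of graphs, $\pd_{\kk}(G_1\cup G_2)=\pd_{\kk}(G_1)+\pd_{\kk}(G_2)$ (which follows from the Künneth-type behaviour of the Alexander dual, or directly from the fact that $\Ind(\NE)$ of a disjoint union is a join and $\reg$ of a simplicial join adds up after shifting), to conclude $\pd_{\kk}(G)\geq \sum_{i=1}^r \pd_{\kk}(H_i)$. Taking the maximum over all elements of $\PD_{\kk}(G)$ gives the $\geq$ half.

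For the upper bound I would argue by induction on $|V(G)|$, the base case being immediate. If $G$ is itself a projectively prime graph, then $\{G\}\in \PD_{\kk}(G)$ and there is nothing to prove. Otherwise, by definition of projectively prime, there exists a vertex $x\in V(G)$ with $\pd_{\kk}(G-x)=\pd_{\kk}(G)$ (we always have $\pd_{\kk}(G-x)\leq \pd_{\kk}(G)$, and failure of strict inequality for some $x$ is exactly the negation of primeness). Apply the induction hypothesis to $G-x$: there is a projectively prime decomposition $\{H_1,\ldots,H_r\}\in\PD_{\kk}(G-x)$ with $\pd_{\kk}(G-x)=\sum_i \pd_{\kk}(H_i)$. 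The subgraphs $H_i$ are pairwise vertex-disjoint induced subgraphs of $G-x$, hence of $G$, each projectively prime with at least two vertices, and the subgraph of $G-x$ induced on $\bigcup_i V(H_i)$ has no edge outside the $E(H_i)$. This set of subgraphs need not be maximal as an induced decomposition of $G$, but any induced decomposition of $G$ refining it (obtained by enlarging the family until maximality) still lies in $\PD_{\kk}(G)$ — here one must check that enlargement can be done keeping all pieces projectively prime, for instance by adjoining further pieces of size two (single edges $K_2$, which are trivially projectively prime) or absorbing stray vertices, without decreasing the sum; since $\pd_{\kk}(K_2)=1>0$, adding pieces only helps. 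Thus $\pd_{\kk}(G)=\pd_{\kk}(G-x)=\sum_i\pd_{\kk}(H_i)\leq \max\{\sum_j\pd_{\kk}(H'_j)\colon \{H'_j\}\in\PD_{\kk}(G)\}$, completing the induction.

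The main obstacle I anticipate is the bookkeeping in the upper-bound step: one must make precise that a projectively prime decomposition of $G-x$ can always be completed to a projectively prime decomposition of $G$ without loss, i.e.\ that maximality of an induced decomposition can be achieved while keeping every block projectively prime, and that this completion does not decrease the sum of projective dimensions. The clean way is to observe that any induced subgraph on a set with no internal edges (an independent set together with the new vertex $x$ and whatever edges $x$ contributes) can be partitioned into blocks of size exactly two whenever it contains an edge, and left alone otherwise, and that $\pd$ is subadditive-from-below in the required sense because every block contributes a nonnegative amount and $K_2$ contributes $1$. A secondary technical point, which I would state as a lemma and prove via Terai duality and the behaviour of regularity under simplicial joins (as already used implicitly around Proposition~\ref{prop:join-prime}), is the disjoint-union additivity $\pd_{\kk}(G_1\cup G_2)=\pd_{\kk}(G_1)+\pd_{\kk}(G_2)$ together with monotonicity under induced subgraphs; both are standard but should be cited or justified for completeness.
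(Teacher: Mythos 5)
Your proposal is correct and follows essentially the same route as the paper: the paper's proof is exactly your upper-bound induction (pick $x$ with $\pd(G)=\pd(G-x)$, apply the inductive hypothesis to $G-x$), except that it simply asserts $\PD(G-x)\subseteq\PD(G)$ and declares that ``the claim follows,'' leaving both the lower bound and the maximality bookkeeping implicit. The two points you single out as needing care --- that a maximal decomposition of $G-x$ need not be maximal in $G$ and must be completed without losing projective primeness or decreasing the sum, and that the reverse inequality $\pd_{\kk}(G)\geq\sum_i\pd_{\kk}(H_i)$ requires monotonicity under induced subgraphs plus additivity over disjoint unions --- are precisely the steps the paper omits, so your version is the same argument written out in full rather than a different one.
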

\begin{proof}
If $G$ is itself a projectively prime graph, there is nothing to prove. Otherwise there exists a vertex $x\in V$ such that $\pd(G)=\pd(G-x)$. If $G-x$ is a projectively prime graph, then $\{G-x\}\in \PD(G)$ so that the result follows. Otherwise, we have 
$\pd(G-x)=\max \{\sum_{i=1}^{t}\pd(S_i)\colon \{S_1,\ldots,S_t\}\in \PD(G-x)\}$ by the induction. However,
since $\PD(G-x)\subseteq \PD(G)$ for such a vertex, the claim follows.
\end{proof}

By taking the advantage of Theorem~\ref{thm:pd-prime-factor}, we next present examples of graphs showing that most of the bounds on the projective dimension of graphs involving domination parameters are in fact far from being tight. Beforehand, we need the following well-known fact first.

\begin{lemma}~\cite{DHS}
For any vertex $x$ of a graph $G$,
\begin{equation}\label{eq:prod-induct}
\pd(G)\leq \max\{\pd(G-N_G[x])+\deg_G(x),\pd(G-x)+1\}.
\end{equation}
\end{lemma}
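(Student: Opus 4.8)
\textbf{Proof plan for the final statement (the inequality \eqref{eq:prod-induct}).}

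The plan is to establish this via the standard long exact sequence in local cohomology (equivalently, the mapping cone construction) associated to removing the vertex $x$ from the edge ideal. Let $I=I_G$ be the edge ideal of $G$ in $R=\kk[V]$, and let $x$ denote both the vertex and the corresponding variable. The key algebraic decomposition is the short exact sequence relating $R/I$ to the colon ideal $(I:x)$ and the quotient $R/(I,x)$: namely $0\to R/(I:x)\xrightarrow{\;\cdot x\;} R/I\to R/(I,x)\to 0$. Here $(I,x)$ is the edge ideal of $G-x$ viewed inside $R$ (so its projective dimension over $R$ is $\pd(G-x)+1$, the extra $1$ accounting for the variable $x$ that now appears only in the quadratic generator $x$), while $(I:x)$ is generated by $x$ together with all variables in $N_G(x)$ and by the edge ideal of $G-N_G[x]$ on the remaining variables; hence $R/(I:x)$ is a polynomial-ring extension of the edge ring of $G-N_G[x]$ by $\deg_G(x)$ further variables placed into the ideal, giving $\pd_R(R/(I:x))=\pd(G-N_G[x])+\deg_G(x)$.

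The main step is then to feed this short exact sequence into the standard inequality for projective dimension along a short exact sequence: for $0\to A\to B\to C\to 0$ one has $\pd(B)\le\max\{\pd(A),\pd(C)\}$, with the refinement that when $\pd(A)\ge\pd(C)$ one even gets $\pd(B)\le\pd(A)$, and more precisely $\pd(B)\le\max\{\pd(A),\pd(C)\}$ always while $\pd(C)\le\max\{\pd(A)+1,\pd(B)\}$. Applying $\pd(B)\le\max\{\pd(A),\pd(C)\}$ with $B=R/I$, $A=R/(I:x)$ (shifted by the degree-one map, which does not change projective dimension), and $C=R/(I,x)$ yields exactly
\begin{equation*}
\pd(G)=\pd_R(R/I)\le\max\{\pd(G-N_G[x])+\deg_G(x),\,\pd(G-x)+1\},
\end{equation*}
which is \eqref{eq:prod-induct}. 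Since the statement is quoted from \cite{DHS} as well-known, I would present only this sketch: identify the two colon/quotient ideals combinatorially, record their projective dimensions, and invoke the short exact sequence estimate.

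The one point deserving care — and the place a reader might stumble — is the bookkeeping of which polynomial ring each quotient lives over and how the auxiliary variables contribute to $\pd$. Concretely, $R/(I,x)\cong \big(\kk[V\setminus x]/I_{G-x}\big)$ as a ring but \emph{not} with the same projective dimension over $R$: killing the variable $x$ costs one in projective dimension, so $\pd_R(R/(I,x))=\pd(G-x)+1$; similarly each neighbour of $x$ appearing as a degree-one generator of $(I:x)$ contributes one, accounting for the $\deg_G(x)$ term. Once this is phrased correctly the inequality is immediate, so I would keep the argument to a few lines and defer to \cite{DHS} for the detailed verification.
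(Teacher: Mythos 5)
The paper itself gives no proof of this lemma---it is quoted directly from \cite{DHS}---and your sketch is precisely the standard argument from that source: the short exact sequence $0\to R/(I:x)\xrightarrow{\;\cdot x\;}R/I\to R/(I,x)\to 0$, the identification of the outer terms with the edge rings of $G-N_G[x]$ and $G-x$ (up to killed or free extra variables), and the estimate $\pd(B)\le\max\{\pd(A),\pd(C)\}$ along a short exact sequence; so the approach is correct and is the intended one. One correction to your bookkeeping: for a simple graph $x\notin(I:x)$, since $x^2\notin I$; the colon ideal is generated by the variables of $N_G(x)$ together with the edge ideal of $G-N_G[x]$, and $x$ survives as a free variable of $R/(I:x)$. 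The value you then use, $\pd_R(R/(I:x))=\pd(G-N_G[x])+\deg_G(x)$, is the correct one, but it is inconsistent with your listing of $x$ among the generators, which would add one more to the projective dimension and weaken the bound; with that description repaired, the argument goes through as you state it.
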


\begin{proposition}\label{prop:gap1}
For each integer $a\geq 1$, there exists a graph $G_a$ satisfying
\begin{equation}
|G_a|-\ie(G_a)+a< \pd(G_a)< |G_a|-\gamma(G_a)-a.
\end{equation}
\end{proposition}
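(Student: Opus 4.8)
The plan is to construct $G_a$ as a join of two carefully chosen building blocks so that the projectively prime decomposition (Theorem~\ref{thm:pd-prime-factor}) and Proposition~\ref{prop:join-prime} let us compute $\pd(G_a)$ exactly, while the domination parameters $\ie$ and $\gamma$ can be read off and pushed far apart. The key observation is that a join $G*H$ collapses all domination: $\gamma(G*H)=1$ and $\ie(G*H)=1$ (a single vertex dominates everything), whereas $\pd(G*H)=|G|+|H|-1$ is as large as possible. So a pure join gives $|G_a|-\ie(G_a)=|G_a|-1=\pd(G_a)$ and $|G_a|-\gamma(G_a)=|G_a|-1=\pd(G_a)$ — both bounds are tight, which is the opposite of what we want. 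The fix is to take a disjoint union of several joins, or a join of one block with a graph that has large $\gamma$ on its own, so that the additive behaviour of $\pd$ over induced decompositions beats the non-additive (or trivially small) behaviour of $\gamma$ and $\ie$.

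Concretely, I would first consider $G_a := \bigsqcup_{j=1}^{t} (K_m * K_m)$, a disjoint union of $t$ copies of the join of two cliques on $m$ vertices each, with $t$ and $m$ to be chosen as functions of $a$. For a disjoint union, $\pd$ is additive, $\gamma$ is additive, and $\ie$ is additive, so $\pd(G_a)=t(2m-1)$, $|G_a|=2tm$, $\gamma(G_a)=t$, $\ie(G_a)=t$. Then $|G_a|-\ie(G_a)=2tm-t=\pd(G_a)$ and $|G_a|-\gamma(G_a)=2tm-t=\pd(G_a)$ again — still tight, because $\ie=\gamma$ for cliques. The real lever is to replace one factor by a graph whose independent domination number and domination number genuinely differ, or more simply to use a block $B$ with $\ie(B)$ large but whose join with a point (or small graph) still has projective dimension governed by $|B|$. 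The cleanest route: take $G_a = H * \overline{K_n}$ where $H$ is chosen so that $\ie(G_a)$ stays small but $\gamma$ versus $\ie$ separation is inherited, OR — and I think this is the workable construction — build $G_a$ from the whisker graphs $W(K_n)$ appearing in the remark after Theorem~\ref{cor:reg-bip-simp}: there $\reg(B(\Ind(W(K_n))))\ge n$ while the graph itself has controlled domination numbers, and $\ie(W(K_n))$, $\gamma(W(K_n))$, $\pd(W(K_n))$ can be computed directly and pushed apart linearly in $n$.

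The main steps, then, are: (1) fix the construction $G_a$ — I expect it to be a disjoint union of $a+c$ copies (for a small constant $c$) of a fixed prime gadget $\Gamma_0$ for which $\pd(\Gamma_0)$, $\gamma(\Gamma_0)$, $\ie(\Gamma_0)$ are all known and satisfy $\ie(\Gamma_0)<\pd(\Gamma_0)-|\Gamma_0|+(\text{something})<\gamma(\Gamma_0)$-type strict inequalities with a fixed positive slack; (2) use additivity of $\pd$ over disjoint unions (equivalently, $\PD(G\sqcup H)$ contains the union of decompositions, via Theorem~\ref{thm:pd-prime-factor}) together with additivity of $\gamma$ and $\ie$ over disjoint unions to get exact formulas for all three quantities in terms of the number of copies; (3) choose the number of copies as an affine function of $a$ so that both strict inequalities $|G_a|-\ie(G_a)+a<\pd(G_a)$ and $\pd(G_a)<|G_a|-\gamma(G_a)-a$ hold, which amounts to checking that the per-copy slacks in the two directions are each at least, say, $2$, so that $k$ copies buy slack $2k>a$; (4) verify the gadget's three invariants by a direct (small) computation, using \eqref{eq:prod-induct} or Proposition~\ref{prop:join-prime} for the $\pd$ value and elementary domination arguments for $\gamma$ and $\ie$.

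The hard part will be step (1)/(4): exhibiting a single graph $\Gamma_0$ that simultaneously has (a) $\ie(\Gamma_0)$ strictly less than $|\Gamma_0|-\pd(\Gamma_0)$ reversed appropriately, i.e. $\pd(\Gamma_0) - (|\Gamma_0|-\ie(\Gamma_0))$ bounded below by a positive constant, \emph{and} (b) $(|\Gamma_0|-\gamma(\Gamma_0)) - \pd(\Gamma_0)$ bounded below by a positive constant — the two gaps of Theorem~\ref{thm:ds1} both strict and both of constant size, so that taking many disjoint copies amplifies \emph{both} linearly. Clutters/graphs where Dao--Schweig's lower bound $|G|-\ie(G)$ is not tight are somewhat delicate (one typically needs $\pd$ to exceed the ``Lyubeznik/ic'' bound), and graphs where the $\tau$ or $\epsilon$ upper bound is loose are also special; finding one graph doing both at once is the crux. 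I expect the whiskered-clique family $W(K_n)$ or a small modification of it to serve: it is cochordal-adjacent so $\pd$ is computable, its independence-domination behaviour is well understood, and the remark in Section~\ref{sect:bir} already signals a large regularity gap for $B(\Ind(W(K_n)))$, which is exactly the phenomenon we want to exploit. Once a suitable $\Gamma_0$ is pinned down, steps (2)--(3) are routine linear bookkeeping.
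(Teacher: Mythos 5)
Your overall strategy---exhibit one gadget $\Gamma_0$ for which \emph{both} bounds of Theorem~\ref{thm:ds1} are slack by a fixed amount, then amplify by disjoint union (where $\pd$, $\gamma$ and $\ie$ are all additive)---would prove the statement, but you never produce the gadget, and every concrete candidate you propose fails. Joins of cliques and disjoint unions thereof make both bounds tight, as you note. The whiskered clique $W(K_n)$ fares no better: it is Cohen--Macaulay with $\pd(W(K_n))=n$, while $\gamma(W(K_n))=\ie(W(K_n))=n$ and $|W(K_n)|=2n$, so $|W(K_n)|-\ie(W(K_n))=|W(K_n)|-\gamma(W(K_n))=\pd(W(K_n))$; the large gap recalled after Theorem~\ref{cor:reg-bip-simp} is between $\reg(\Ind(F_n))$ and $\reg(B(\Ind(F_n)))$, not between $\pd(F_n)$ and the domination bounds. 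Moreover, your assertion that $\ie(G*H)=1$ is false in general: an independent set of a join lies entirely in one factor, so $\ie(G*H)=\min\{\ie(G),\ie(H)\}$ (and $\gamma(G*H)$ is typically $2$, not $1$). This error hides exactly the mechanism that does work: $K_{k,k}$ is the join of two edgeless graphs, so $\pd(K_{k,k})=2k-1$ by Proposition~\ref{prop:join-prime}, while $\ie(K_{k,k})=k$, giving lower-bound slack $k-1$ already in a single graph. Thus the crux you yourself identify---a graph where both gaps are simultaneously positive---is left entirely open, and your leading candidates cannot supply it; this is a genuine gap, since constructing such a graph is the whole content of the proposition.

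For comparison, the paper does not amplify one gadget but combines two different slack sources inside one connected graph. Take $K_{k,k}$ with a distinguished vertex $v$, and attach $r$ disjoint paths $P_{3s+1}$ by joining one leaf $x_i$ of each path to $v$. Then $\{K_{k,k},\,rP_{3s}\}$ is a projectively prime decomposition, so Theorem~\ref{thm:pd-prime-factor} gives $\pd\geq 2k-1+2rs$, and the deletion inequality~(\ref{eq:prod-induct}) applied at $v$ gives the matching upper bound, whence $\pd=2k+2rs-1$ exactly; on the other hand $\ie=sr+k$ (any independent dominating set must contain a full side of $K_{k,k}$) and $\gamma=sr+2$. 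Each attached path contributes $2s$ to $\pd$ but $2s+1$ to $|G|-\gamma$ (upper-bound slack), while the bipartite block contributes $2k-1$ to $\pd$ but only $k$ to $|G|-\ie$ (lower-bound slack); choosing $k=2r$ makes both gaps equal to $r-1$, and $r=a+2$ finishes. If you wish to keep your amplification scheme, such a connected graph with $r\geq 2$ could serve as your $\Gamma_0$, but producing it is precisely the missing step in your proposal.
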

\begin{proof} Assume that $k$ and $r$ are two positive integers with $k>r+1$. 
Let $T_k:=K_{k,k}$ be the complete bipartite graph and fix a vertex
$v\in V(T_k)$. We take $S_r:=rP_{3s+1}$, which is the disjoint $r$ copies of the path on $3s+1$ vertices ($s\geq 1$) together with a leave vertex $x_i$ from each copy. Now, we from the graph $G_{2k,r}$ on $V(T_k)\cup V(S_r)$ such that $E(G_{2k,r})=E(T_k)\cup E(S_r)\cup \{x_iv\colon i\in [r]\}$. Observe that $\ie(G_{2k,r})=sr+k$ and $\gamma(G_r)=sr+2$. Since $\{K_{k,k}, rP_{3s}\}$ is a projectively prime decomposition of $G_{2k,r}$, we have $2k-1+2rs\leq \pd(G_{2k,r})$ by Theorem~\ref{thm:pd-prime-factor}. Moreover, $\pd(G_{2k,r}-v)=(2k-2)+2rs$ and
$\pd(G_{2k,r}-N_{G_{2k,r}}[v])=2rs$ so that $\pd(G_r)=2k+2rs-1$ by the inequality (\ref{eq:prod-induct}). Consequently, if we choose $k=2r$, we then have 
$\pd(G_{4r,r})-(|G_{4r,r}|-\ie(G_{4r,r}))=r-1$ and $(|G_{4r,r}|-\gamma(G_{4r,r}))-\pd(G_{4r,r})=r-1$.
This in particular implies that if we define $G_a:=G_{4(a+2),a+2}$, then
$$(|G_a|-\ie(G_a))+a<\pd(G_a)<(|G_a|-\gamma(G_a))-a$$ as claimed.
\end{proof}

\begin{proposition}\label{prop:gap2}
For each integer $b\geq 1$, there exists a graph $H_b$ satisfying
\begin{equation}
|H_b|-\gamma(H_b)+b< \pd(H_b)< |H_b|-\epsilon(H_b)-b.
\end{equation}
\end{proposition}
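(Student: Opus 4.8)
The plan is to construct $H_b$ by mimicking the construction in Proposition~\ref{prop:gap1}, but this time arranging the gadget so that the \emph{edge-wise} domination number $\epsilon(H_b)$ is forced to be large while keeping the ordinary domination number $\gamma(H_b)$ small and $\pd(H_b)$ strictly between the two associated linear bounds. First I would pick a ``spreading'' gadget: take $r$ disjoint copies of a path $P_{3s+1}$, each contributing a pending leaf $x_i$, and attach all the $x_i$ to a single apex vertex $v$ which is itself the center of a star (or one side of a large complete bipartite graph $K_{k,k}$) so that $v$ dominates many vertices at once but no single matching edge can cover all of the star's leaves. The key numeric facts to nail down are: $\gamma(H_b)$ stays roughly $sr+2$ because $v$ together with one vertex per path component dominates everything; $\epsilon(H_b)$ is large because each star leaf that is not an endpoint of a chosen matching edge must itself be matched or adjacent to a matched edge, forcing on the order of $k$ edges in any edge-wise dominating matching; and $\pd(H_b)$ is computed by exhibiting $\{K_{k,k},\, rP_{3s}\}$ (or the analogous path pieces) as a projectively prime decomposition via Theorem~\ref{thm:pd-prime-factor}, giving a lower bound $2k-1+2rs$, and then matching it with an upper bound obtained from inequality~(\ref{eq:prod-induct}) applied to the apex vertex $v$, exactly as in Proposition~\ref{prop:gap1}.

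Concretely, the steps in order would be: (1) define $H_{k,r}$ precisely (the $K_{k,k}$ piece with a distinguished vertex $v$, the $r$ copies of $P_{3s+1}$ with leaves $x_i$, and edges $x_iv$); (2) compute $\gamma(H_{k,r})$ by a direct domination argument — a minimal dominating set uses $v$ plus $s$ vertices per path copy, giving $\gamma = sr + 1$ or $sr+2$ depending on whether $v$ covers the leaves; (3) compute $\epsilon(H_{k,r})$, showing it grows linearly in $k$: the vertices on the ``$v$-free'' side of $K_{k,k}$ can only be edge-wise dominated by edges of $K_{k,k}$ incident to them or to their neighbours, and since these form a large independent set on one side, at least $\sim k$ edges are needed, so $\epsilon(H_{k,r}) \geq k - c$ for a small constant $c$; (4) establish $\pd(H_{k,r}) = 2k + 2rs - 1$ using the projectively prime decomposition $\{K_{k,k}, rP_{3s}\}$ for the lower bound (here I use $\pd(K_{k,k}) = 2k-1$ from Proposition~\ref{prop:join-prime} since $K_{k,k}=\overline{K_k}*\overline{K_k}$, and $\pd(P_{3s}) = 2s$) together with~(\ref{eq:prod-induct}) at $v$, checking $\pd(H_{k,r}-v) = 2k-2+2rs$ and $\pd(H_{k,r}-N[v]) = 2rs$; (5) substitute $|H_{k,r}| = 2k + (3s+1)r$ and choose $k$ as a suitable function of $r$ (e.g. $k = 2r$ as before, or larger) so that both gaps $\pd(H_{k,r}) - (|H_{k,r}|-\gamma(H_{k,r}))$ and $(|H_{k,r}|-\epsilon(H_{k,r})) - \pd(H_{k,r})$ exceed $b$, then set $H_b := H_{k(b),\,r(b)}$.

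The main obstacle I anticipate is step (3): pinning down $\epsilon(H_{k,r})$ exactly (or at least a tight enough linear lower bound) requires care, because an edge-wise dominating matching may reuse edges of $K_{k,k}$ cleverly and the attachment edges $x_iv$ interact with the star structure. I would handle this by choosing the apex gadget to be genuinely $K_{k,k}$ (rather than a star) so that one whole side of $k$ vertices is independent and pairwise at distance $2$; any edge of $H_{k,r}$ covering such a vertex is incident to it or to a vertex in $N(v)$, and a counting/packing argument on that independent side forces $\epsilon \geq k/1 - O(1)$, while the explicit perfect matching of $K_{k,k}$ shows $\epsilon \leq k$. The secondary subtlety is verifying that $\{K_{k,k}, rP_{3s}\}$ really is an induced decomposition of $H_{k,r}$ — i.e. that deleting $v$ and one leaf per path copy leaves no ``extra'' edges between the pieces — which follows because the only edges joining the $K_{k,k}$-part to the path-part are the $x_iv$ edges, all of which are destroyed once $v$ is removed; and that each piece is projectively prime, which for $K_{k,k}$ is Proposition~\ref{prop:join-prime} and for $P_{3s}$ is the remark on paths $P_{3k}$ preceding it.
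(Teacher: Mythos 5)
Your construction cannot work, for two separate reasons. First, and most fatally, the left-hand inequality $|H_b|-\gamma(H_b)+b<\pd(H_b)$ is exactly the \emph{opposite} of what the gap1-style gadget delivers: Proposition~\ref{prop:gap1} proves $\pd(G_{2k,r})<|G_{2k,r}|-\gamma(G_{2k,r})$ for that graph, and the path pieces $P_{3s+1}$ are precisely what push the projective dimension \emph{below} $|G|-\gamma(G)$ (each copy contributes $3s+1$ vertices and only $s$ to $\gamma$, but only $2s$ to $\pd$). To beat $|G|-\gamma(G)$ by more than $b$ you need building blocks for which $\pd$ \emph{exceeds} $|\cdot|-\gamma$, and since a single $K_{k,k}$ gives an excess of exactly $1$ (namely $\pd(K_{k,k})=2k-1$ versus $|K_{k,k}|-\gamma(K_{k,k})=2k-2$), you are forced to accumulate $\Theta(b)$ disjoint copies of such join graphs. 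This is what the paper does: it glues $2r+1$ copies of $K_{k,k}$ together by a few connector vertices ($x$ and $y_2,\dots,y_{r+1}$), uses the projectively prime decomposition $\{rK_{k,k},(r+1)K_{k,k}\}$ with Theorem~\ref{thm:pd-prime-factor} and inequality~(\ref{eq:prod-induct}) to get $\pd=(2k-1)(2r+1)$, and the $r+1$ connector vertices inflate $|H|$ relative to $\epsilon$ and $\pd$ to produce both gaps, each of size about $r$.

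Second, your step (3) rests on a false computation of $\epsilon$. For an edge $e=uy$ of $K_{k,k}$ with $u\in X$ and $y\in Y$ one has $N[e]=N[u]\cup N[y]=X\cup Y$, so a \emph{single} edge of $K_{k,k}$ edge-wise dominates every vertex of $K_{k,k}$; hence $\epsilon(K_{k,k})=1$, not $\sim k$. The independent side being pairwise at distance $2$ is irrelevant, because edge-wise domination of a vertex $w$ by $e$ only requires $w\in N[e]$, i.e.\ $w$ adjacent to an endpoint of $e$, which always holds in $K_{k,k}$. You appear to be conflating $\epsilon$ with a matching or covering condition on the independent side. (As it happens, a large $\epsilon$ is not what is needed anyway: the right-hand inequality is obtained in the paper not by making $\epsilon$ big but by adding connector vertices that increase $|H|$ without increasing $\pd$ or $\epsilon$ proportionally.) Your secondary observations --- that $\{K_{k,k},rP_{3s}\}$ is an induced decomposition once $v$ is deleted, and that $K_{k,k}$ and $P_{3s}$ are projectively prime --- are fine, but they feed into a construction that cannot satisfy the left inequality.
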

\begin{proof}
We assume that $k$ and $r$ are two positive integers with $k>r+2$.
We take $L_{k,r}:=rK_{k,k}$ and pick a vertex $v_i$ from each copy of $K_{k,k}$. Then, define $R_{k,r}$ to be the graph on $V(L_{k,r})\cup \{x\}$, where $x$ is a vertex disjointly chosen from $V(L_{k,r})$, such that
$E(R_{k,r}):=E(L_{k,r})\cup \{xv_i\colon i\in [r]\}$. Observe that
$|R_{k,r}|=2rk+1$, $\gamma(R_{k,r})=2r$ and $\epsilon(R_{k,r})=r$. Furthermore, we have $\pd(R_{k,r})=(2k-1)r$, which follows from the inequality~(\ref{eq:prod-induct}) together with the fact that $\{rK_{k,k}\}$ is a projectively prime decomposition of $R_{k,r}$.

Secondly, we define another graph $Z_{k,r}$ on $V((r+1)K_{k,k})\cup \{y_2,\ldots,y_{r+1}\}$ by $E(Z_{k,r})=E((r+1)K_{k,k})\cup \{v_1y_j, v_jy_j\colon j\in \{2,3,\ldots,r+1\}\}$, where the vertex $v_l$ is chosen from the $l^{\textrm{th}}$-copy of $K_{k,k}$. Observe that $|Z_{k,r}|=2k(r+1)+r$,
$\gamma(Z_{k,r})=2(r+1)$ and $\epsilon(Z_{k,r})=r+1$. Again, by considering the vertex $v_1$ in the inequality~(\ref{eq:prod-induct}) together with the projectively prime decomposition $\{(r+1)K_{k,k}\}$ of $Z_{k,r}$, we note that $\pd(Z_{k,r})=(2k-1)(r+1)$. 

Now, assuming that we construct $R_{k,r}$ and $Z_{k,r}$
on the disjoint sets of vertices, we form a new graph $H_{k,r}$ on
$V(R_{k,r})\cup V(Z_{k,r})$ by $E(H_{k,r}):=E(R_{k,r})\cup E(Z_{k,r})\cup \{xv_1\}$. It then follows that $|H_{k,r}|=2k(2r+1)+r+1$, $\gamma(H_{k,r})=2(2r+1)$ and $\epsilon(H_{k,r})=2r+1$. Once again, the 
inequality~(\ref{eq:prod-induct}) with respect to the vertex $v_1$ of $H_{k,r}$ together with the projectively prime decomposition $\{rK_{k,k}, (r+1)K_{k,k}\}$, where the first prime arises from $R_{k,r}$ and the second comes up from $Z_{k,l}$ yields that $\pd(H_{k,r})=(2k-1)(2r+1)$.

Finally, if we define $H_b:=H_{b+4,b+1}$, we conclude that
$\pd(H_b)-(|H_b|-\gamma(H_b))=b+1$ and $(|H_b|-\epsilon(H_b))-\pd(H_b)=b+2$ so that the claimed inequality
$$|H_b|-\gamma(H_b)+b< \pd(H_b)< |H_b|-\epsilon(H_b)-b$$ holds.
\end{proof}

Regarding the results of Propositions~\ref{prop:gap1} and~\ref{prop:gap2} together with Theorem~\ref{thm:pd-prime-factor}, the most prominent question would be investigation of the structural properties of projectively prime graphs as well as operations on graphs that preserve projectively primeness. It would be interesting to determine under what conditions on a projectively prime graph $G$, the inequality $\pd(G)\leq |G|-\gamma(G)$ holds. We predict that $C_{3k+1}$-free projectively prime graphs satisfy such an inequality.


\end{document}